\documentclass[twocolumn]{autart}

\usepackage[top=.75in, bottom=.85in, left=.65in, right=.65in,paperwidth=8.5in,paperheight=11in]{geometry}

\usepackage[noadjust]{cite}
\usepackage{mathrsfs}
\usepackage{mathtools}
\usepackage{enumitem}
\usepackage{amsmath,amssymb}
\usepackage{graphicx}
\usepackage{epstopdf,cite}
\usepackage{url}
\usepackage{bm}
\usepackage{multirow,booktabs,multicol}
\usepackage{mathtools}
\usepackage{setspace}
\usepackage{makecell}
\usepackage{pifont}
\usepackage{graphicx,color}
\usepackage{subfigure,balance}
\usepackage{comment}
\usepackage[english]{babel}
\usepackage{tikz}
\usepackage[justification=centering]{caption}
\newcommand{\tr}{{{\mathsf T}}}

\newcommand{\method}[1]{\texttt{#1}}

\newcommand{\calH}{\mathcal{H}}

\newcommand{\Tr}{\operatorname{tr}}
\newcommand{\ECL}{$\mathtt{ECL}$}

\newtheorem{definition}{Definition}
\newtheorem{theorem}{Theorem}

\newtheorem{proposition}{Proposition}
\newtheorem{remark}{Remark}
\newtheorem{lemma}{Lemma}
\newtheorem{example}{Example}

\newtheorem{corollary}{Corollary}
\newtheorem{assumption}{Assumption}

\newenvironment{proof}{\textbf{Proof:}}{\hfill$\qed$}

\let\OLDthebibliography\thebibliography
\renewcommand\thebibliography[1]{
  \OLDthebibliography{#1}
  \setlength{\parskip}{0pt}
  \setlength{\itemsep}{3pt plus 0.3ex}
}

\makeatletter
\def\endthebibliography{%
  \def\@noitemerr{\@latex@warning{Empty `thebibliography' environment}}%
  \endlist
}
\makeatother

\begin{document}

\setlength{\abovedisplayskip}{9pt}
\setlength{\belowdisplayskip}{9pt}

	\begin{frontmatter}
		
		\title{Policy Optimization of Mixed $\mathcal{H}_2/\mathcal{H}_\infty$ Control: Benign Nonconvexity and Global Optimality}
		
		\thanks[footnoteinfo]{The work of Chih-Fan Pai, Yuto Watanabe, and Yang Zheng is supported by NSF CMMI 2320697 and NSF CAREER 2340713.}
		
		\author[ucsd]{Chih-Fan Pai}\ead{cpai@ucsd.edu},
		\author[ucsd]{Yuto Watanabe}\ead{y1watanabe@ucsd.edu},
		\author[Peking]{Yujie Tang}\ead{yujietang@pku.edu.cn},
		\author[ucsd]{Yang Zheng}\ead{zhengy@ucsd.edu}
		
		\address[ucsd]{Department of Electrical and Computer Engineering, University of California San Diego, CA 92093.} \vspace{-.1in}
		\address[Peking]{School of Advanced Manufacturing and Robotics, Peking University} \vspace{-.1in} \vspace{-.2in}
		
		\begin{keyword}                   
			Policy optimization, Global optimality, Nonconvex optimization, Optimal and robust control.
		\end{keyword}

		\begin{abstract}
		{Mixed $\mathcal{H}_2/\mathcal{H}_\infty$ control balances performance and robustness by minimizing an $\mathcal{H}_2$ cost bound subject to an $\mathcal{H}_\infty$ constraint. However, classical Riccati/LMI solutions offer limited insight into the nonconvex optimization landscape and do not readily scale to large-scale or data-driven settings.
        In this paper, we revisit mixed $\mathcal{H}_2/\mathcal{H}_\infty$ control from a modern policy optimization viewpoint, including the general two-channel and single-channel cases. 
        One central result is that both cases enjoy a benign nonconvex structure: \textit{every stationary point is globally optimal}. 
        We characterize the $\mathcal{H}_\infty$-constrained feasible set, which is open, path-connected, with boundary given exactly by policies saturating the $\mathcal{H}_\infty$ constraint. We also show that the mixed objective is real analytic in the interior with explicit gradient formulas.
        Our key analysis builds on an Extended Convex Lifting ($\mathtt{ECL}$) framework that bridges nonconvex policy optimization and convex reformulations.
        The $\mathtt{ECL}$ constructions rely on non-strict Riccati inequalities that allow us to characterize global optimality.
        These insights reveal hidden convexity in mixed $\mathcal{H}_2/\mathcal{H}_\infty$ control and facilitate the design of scalable policy iteration methods in large-scale settings. 
        } \vspace{-2mm}
		\end{abstract}
		
	\end{frontmatter}

\section{Introduction}

Performance and robustness are two central objectives in control design: $\calH_2$ control optimizes average performance \cite{kalman1960contributions}, whereas $\calH_\infty$ control guarantees safety against worst-case scenarios \cite{doyle1988state}.
Mixed $\calH_2/\calH_\infty$ control provides a principled framework to balance the two, leading to a variety of formulations studied across decades  \cite{glover1988state,bernstein1989lqg,khargonekar1991mixed,kaminer1993mixed,apkarian2008mixed}.
A particularly influential formulation designs a stabilizing controller that minimizes an $\calH_2$ cost bound subject to an $\calH_\infty$ constraint \cite{bernstein1989lqg,khargonekar1991mixed}.
Classical solutions based on coupled Riccati equations \cite{bernstein1989lqg} or linear matrix inequalities (LMIs) \cite{khargonekar1991mixed} are well established,~but they provide little understanding of the underlying optimization landscape. 
Moreover, these methods are inherently model-based and scale poorly with system dimension, limiting their applicability in large-scale or data-driven settings.

In contrast, policy optimization has emerged as a promising alternative for controller design \cite{hu2023toward,Talebi2024geometry}, inspired by the success of reinforcement learning in sequential decision-making and continuous control tasks \cite{recht2019tour}.
Despite the nonconvexity of policy spaces, recent studies have revealed benign landscapes in various control problems such as stabilization \cite{perdomo2021stabilizing,zhao2022sample}, linear quadratic regulation (LQR) \cite{fazel2018global,mohammadi2021convergence,watanabe2025revisiting}, linear quadratic Gaussian (LQG) \cite{tang2021analysis,zheng2022escaping,duan2022optimization}, and dynamic filtering \cite{umenberger2022globally}. For these control problems, stationary points can be globally optimal, and gradient-based methods can achieve global convergence under mild assumptions.

A key insight behind these benign landscape results is that many nonconvex control problems can be reformulated as convex ones via appropriate changes of variables.
In particular, LMI-based synthesis methods, despite involving auxiliary Lyapunov variables, provide a lens for analyzing the geometry of policy optimization \cite{sun2021learning,mohammadi2021convergence,tang2021analysis,guo2022global,umenberger2022globally,hu2022connectivity}.
This perspective has been formalized in the Extended Convex Lifting (\ECL{}) framework \cite{zheng2024benign}, which systematically bridges classical convex reformulations and modern nonconvex policy optimization.
The \ECL{} framework is broadly applicable, encompassing state-feedback and output-feedback $\calH_2$ and $\calH_\infty$ control as well as distributed control.

In this work, we revisit the classical mixed $\calH_2/\calH_\infty$ control design from a modern policy optimization perspective.
Building on the classical formulations of \cite{bernstein1989lqg,khargonekar1991mixed}, we analyze both the general two-channel case and its single-channel specialization, and provide a systematic study of the associated nonconvex optimization landscapes.
Our results reveal \textit{hidden convexity} and establish \textit{the absence of spurious stationary points}. Beyond new theoretical insights into nonconvex optimization in modern control, these results will facilitate the mixed $\calH_2/\calH_\infty$ design in large-scale and/or model-free data-driven settings. 

\subsection{Our contributions}
This paper presents a systematic study of mixed $\calH_2/\calH_\infty$ state-feedback control through the lens of modern nonconvex optimization.
Technically, our main  contributions are:
\begin{enumerate}
    \item \textbf{Basic landscape properties.} We analyze the \textit{geometry} of the nonconvex feasible set. It is known that this set is open and path-connected (Lemma~\ref{lemma:path-connectedness}). We further precisely characterize its boundary as the set of policies that exactly saturate the $\calH_\infty$ constraint (Theorem~\ref{prop:closure} and Corollary~\ref{corollary:boundary-set}).
    We also examine the \textit{landscape} of the mixed cost function. This cost function is real analytic in the interior, and continuous on the closure of its domain  (Theorem~\ref{theorem:analytical-functions} and Lemma~\ref{thm:continuity-mix-cost}). Thus, the mixed cost function is smooth, and we provide its explicit gradient formulas (Lemmas~\ref{theorem:policy-gradient-mixed} and \ref{theorem:policy-gradient-mixed-signle-channel}). These results underpin our analysis of stationarity, global optimality, and solvability of mixed $\calH_2/\calH_\infty$ control.
    
    \item \textbf{Global optimality of stationary points.} We investigate the \textit{global optimality} of the two-channel mixed control. This includes the single-channel setting in \cite{zhang2021policy} as a special case. Despite its nonconvexity, we establish that no spurious stationary points exist (Theorem~\ref{thm:mix-stationary-global-optimality}). This property, along with the gradient expressions, recovers the classical optimality conditions (Corollaries~\ref{thm:mixed-2ch-optimality-conditions} and \ref{thm:mixed-1ch-optimality-conditions}). We further analyze existence and uniqueness, showing that the single-channel case always admits a unique stationary point (Theorem~\ref{theorem:unique-stationary-point}), whereas the two-channel case may not (Fact~\ref{fact:solvability-2ch-mix}). Nevertheless, we prove that a stationary point exists when the robustness constraint is sufficiently relaxed (Theorem~\ref{prop:existence-2ch-large-beta}).
    
    \item \textbf{Analysis techniques via \ECL{}}. We explicitly construct an extended convex lifting (\ECL{}) for the two-channel mixed control (Theorem~\ref{proposition:mixed-lifting}). While relying on classical LMI techniques, our convex lifting construction is non-trivial since we need to employ \textit{non-strict} Riccati inequalities and LMIs, in contrast to classical suboptimal controller synthesis based on \textit{strict} inequalities \cite{khargonekar1991mixed}. 
    This key distinction enables the \ECL{} framework to certify the global optimality of stationary points over the entire feasible set. Moreover, the resulting convex reformulation (Theorem~\ref{thm:mix-convex-reformulation}) not only preserves the optimal value of the original nonconvex problem, but also guarantees solvability when incorporating boundary policies (Proposition~\ref{thm:solvability}).
\end{enumerate}

\vspace{1mm}

\subsection{Related work} \label{subsec:related-work}

\noindent\textbf{Mixed $\calH_2/\calH_\infty$ control.} 
The study of mixed $\calH_2/\calH_\infty$ control dates back to the seminal works of \cite{bernstein1989lqg,khargonekar1991mixed}, which formulated the problem of minimizing an $\calH_2$ cost bound subject to an $\calH_\infty$ constraint.
Solvability of this formulation was first established in \cite{bernstein1989lqg} via coupled Riccati equations, and later \cite{khargonekar1991mixed} proposed a more tractable approach for suboptimal controller synthesis.
A special case was later shown in \cite{mustafa1989relations} to coincide with the maximum entropy $\calH_\infty$ control \cite{glover1989derivation}.
A Nash game formulation was also developed in \cite{limebeer1994nash}, framing the mixed design as a two-player game with the optimal controller characterized by coupled Riccati equations.
Beyond analytical methods, a non-smooth optimization approach was proposed in \cite{apkarian2008mixed}, ensuring convergence to a stationary point controller via a proximity control algorithm.
Furthermore, LMI-based methods have also been developed in \cite{scherer1997multiobjective}, providing numerical solutions through semidefinite programming.
More recently, \cite{zhang2021policy} studied policy gradient methods for the single-channel case, showing that the iterates implicitly preserve the $\calH_\infty$ constraint throughout optimization and converge to globally optimal policies.
Although their analysis proves global optimality of stationary points, it applies only to a special single-channel case of the broader formulation in \cite{khargonekar1991mixed}, and relies on a game-theoretic argument that may be less accessible to those unfamiliar with dynamic game theory \cite{bacsar2008h}.

\noindent\textbf{Policy optimization in control.}
Recent years have seen growing interest in direct policy search for controlling dynamical systems \cite{recht2019tour,hu2023toward}.
For classical LQR, the cost is coercive and gradient dominant, yielding global convergence of policy gradient methods \cite{fazel2018global,mohammadi2021convergence,watanabe2025revisiting}.
Similar benign landscapes have been established for Markovian jump LQR \cite{jansch2022policy}, distributed LQR \cite{li2022distributed,furieri2020learning}, LQ games \cite{zhang2019policy,watanabe2025semidefinite}, and mixed $\calH_2/\calH_\infty$ design \cite{zhang2021policy}.
For nonsmooth $\calH_\infty$ optimization, global convergence is established for state feedback \cite{guo2022global}, and convergence to Goldstein stationary points is shown for static output feedback \cite{guo2023complexity}.
Partial observation and dynamic output feedback introduce more intricate landscapes: in LQG, controllable and observable stationary points are globally optimal though saddle points may exist \cite{tang2023analysis,zheng2023benign}, motivating saddle-escaping \cite{zheng2022escaping} and Riemannian gradient methods \cite{kraisler2024output}.
For dynamic output feedback $\calH_\infty$, stabilizing controllers are nonconvex and may form a disconnected set \cite{hu2022connectivity}, but all nondegenerate Clarke stationary points are globally optimal \cite{zheng2023benign}. We refer to \cite{hu2023toward,Talebi2024geometry} for two recent surveys.

\noindent\textbf{Convex lifting for nonconvex control.}
A growing line of work has leveraged convex reformulations to analyze nonconvex control landscapes \cite{mohammadi2021convergence,watanabe2025revisiting,sun2021learning,guo2022global,umenberger2022globally,tang2021analysis}.
Many classical control problems admit LMI-based reformulations \cite{scherer1997multiobjective}, which explain the benign geometry observed in policy optimization.
For example, convex reparameterizations yield global convergence in continuous-time LQR \cite{mohammadi2021convergence,watanabe2025revisiting}, and certify global optimality of Clarke stationary points in discrete-time $\mathcal{H}_\infty$ control \cite{guo2022global}. 
Extensions to output feedback settings include convex lifting strategy for dynamic estimation \cite{umenberger2022globally} and global optimality results for $\calH_\infty$ control \cite{tang2023analysis}.
Most recently, the \ECL{} framework \cite{zheng2024benign} unifies these perspectives, accommodating static and dynamic policies, smooth and nonsmooth costs, and distinguishing degenerate from nondegenerate policies based on non-strict LMIs.

\subsection{Paper outline}
The rest of this paper is organized as follows.
Section~\ref{section:preliminaries} formulates the mixed $\calH_2/\calH_\infty$ policy optimization. 
Section~\ref{section:opt-landscape} examines the geometry of the feasible set and the structural properties of the cost function.
Section~\ref{section:no-spurious-stationary} establishes our main global optimality result, along with characterizations of optimality conditions and the existence and uniqueness of stationary points.
Section~\ref{section:ECL} presents the \ECL{} framework underlying our analysis and develops a tailored \ECL{} for mixed control.
Section~\ref{section:experiments} reports numerical results, and Section~\ref{sec:conclusions} concludes the paper.
Technical proofs are provided in the appendices.

\vspace{2pt}
\noindent \textbf{Notations.} 
We denote the set of $k\times k$ real symmetric matrices by $\mathbb{S}^k$.
For $M_1, M_2\!\in\! \mathbb{S}^k$, we write $M_1\prec (\preceq) M_2$ and $M_2 \succ (\succeq) M_1$ if $M_2-M_1$ is positive (semi)definite. 
The Frobenius norm for matrices is denoted by $\|\cdot\|$.
We use $I_n$ and $0_{m\times n}$ for the $n\times n$ identity and $m\times n$ zero matrices, respectively, omitting their subscripts when clear. For a subset $S$ of a topological space, $\operatorname{int}(S)$ denote its interior and $\operatorname{cl}(S)$ its closure. 

\section{Preliminaries} \label{section:preliminaries}

This section introduces the mixed $\calH_2/\calH_\infty$ control problem and presents our problem statement.

\subsection{System dynamics and robustness}
Consider the continuous-time linear dynamical system
\begin{equation}\label{eq:Dynamics-state}
    \begin{aligned}
    \dot{x}(t) &= Ax(t)+Bu(t)+ B_ww(t), 
    \end{aligned}
\end{equation}
where $x(t) \in \mathbb{R}^n$ is the state, $u(t)\in \mathbb{R}^m$ is the control input, $w(t) \in \mathbb{R}^n$ is the disturbance.
We focus on static state feedback policies of the form $u(t) = Kx(t)$, where $K\in \mathbb{R}^{m \times n}$. The set of stabilizing policies is defined as
\begin{equation} \label{eq:stabilizing-set}
    \mathcal{K}=
    \left\{K \in \mathbb{R}^{m \times n} \mid 
    A\!+\!BK \text{ is Hurwitz}\right\}.
\end{equation}
We henceforth denote $A_K:=A\!+\!BK$ and $W := B_wB_w^\tr$.

In standard LQR, the disturbance $w(t)$ is modeled as zero-mean white Gaussian noise with identity covariance, i.e., $\mathbb{E}[w(t)w(\tau)]=\delta(t-\tau)I_n$. The objective is to find a stabilizing policy $K\in\mathcal{K}$ that minimizes the averaged cost
$
    \lim_{T \to \infty}\mathbb{E}\left[\frac{1}{T}\int_0^{T}
        x(t)^\tr Q_2 x(t) \!+\! u(t)^\tr R_2 u(t)\,dt\right], 
$
where $Q_2\!\succeq\!0$ and $R_2\succ0$ are performance weight matrices. It is known that LQR can be equivalently cast as $\mathcal{H}_2$ optimal control \cite{zhou1996robust}: $\min_{K \in \mathcal{K}} \! \|\mathbf{T}_{2}(K)\|_{\mathcal{H}_2}^2$, where $z_2$ defines the $\calH_2$ performance output and $\mathbf{T}_{2}(K)$ denotes the transfer function from $w$ to $z_2$ as follows
\begin{align}
    z_2 & \!:=\! \begin{bmatrix}
        Q_2^{1/2}x \\ R_2^{1/2}u
    \end{bmatrix},\
    \mathbf{T}_{2}(K) \!=\! \begin{bmatrix}
        Q_2^{1/2} \\ R_2^{1/2}K
    \end{bmatrix} (sI - A_K)^{-1}B_w. \label{eq:transfer-function-Tz2w}
\end{align} 

On the other hand, $\mathcal{H}_\infty$ robust control treats the disturbance $w(t)$ as an adversarial input with bounded energy. The aim is to design a stabilizing policy $K\in\mathcal{K}$ that minimizes the worst-case cost
$
    \sup \! \int_0^{\infty}
        x(t)^\tr Q_\infty x(t) \!+\! u(t)^\tr R_\infty u(t) dt
$
subject to $\|w\|_{\ell_2}^2\!=\!\int_0^\infty w^\tr(t)w(t)dt\!\leq\!1$ and $x(0)\!=\!0$, where $Q_\infty \!\succ\! 0$ and $R_\infty \!\succ\! 0$. 
This problem is equivalent to $\mathcal{H}_\infty$ optimal control \cite{zhou1996robust}: $\inf_{K \in \mathcal{K}}\! \|\mathbf{T}_{\infty}(K)\|_{\mathcal{H}_\infty}$, where $z_\infty$ defines the $\calH_\infty$ performance output and $\mathbf{T}_{\infty}(K)$ denotes the transfer function from $w$ to $z_\infty$ as follows
\begin{align}
    z_\infty &\!:=\!\begin{bmatrix}
        Q_\infty^{1/2}x \\ R_\infty^{1/2}u
    \end{bmatrix},\
    \mathbf{T}_{\infty}(K) \!=\! \begin{bmatrix}
        Q_\infty^{1/2} \\ R_\infty^{1/2}K
    \end{bmatrix} (sI \!-\! A_K)^{-1}B_w. \label{eq:transfer-function-Tziw}
\end{align} 

One may also consider a mixed $\calH_2/\calH_\infty$ design
\begin{equation} \label{eq:LQR-hinf}
\begin{aligned}
    \inf_{K \in \mathcal{K}}&\quad  \|\mathbf{T}_{2}(K)\|_{\mathcal{H}_2}^2,\\
    \text{subject to}& \quad  \|\mathbf{T}_{\infty}(K)\|_{\mathcal{H}_\infty}<\beta,
\end{aligned}
\end{equation}
where $\beta$ is a prescribed bound. 
Here, the $\calH_2$ channel captures nominal performance, while the $\calH_\infty$ channel enforces system robustness.
It is worth noting that these two channels may be distinct \cite{bernstein1989lqg,khargonekar1991mixed} or identical \cite{mustafa1989relations,zhang2021policy}.
We define the feasible set of $\mathcal{H}_\infty$-constrained stabilizing policies as 
\begin{equation} \label{eq:mix-feasible-set}
    \mathcal{K}_\beta \coloneqq
    \left\{K \in \mathcal{K} \mid \|\mathbf{T}_{\infty}(K)\|_{\mathcal{H}_\infty} \!<\! \beta\right\}.
\end{equation}
Note that a strict $\mathcal{H}_\infty$ bound is commonly used to ensure a well-defined stabilizing Riccati solution \cite{khargonekar1991mixed,mustafa1989relations,zhang2021policy}.

\subsection{Policy optimization for mixed $\mathcal{H}_2/\mathcal{H}_\infty$ design} \label{subsec:mixed-PO}

Despite simple formulation, problem \eqref{eq:LQR-hinf} is challenging to solve \cite{rotea1991h2,megretski1994order}.
A classical approach from \cite{bernstein1989lqg} is to minimize an upper bound on the $\mathcal{H}_2$ cost over the feasible set $\mathcal{K}_\beta$.

Recall that the $\mathcal{H}_2$ cost for any stabilizing policy $K\in\mathcal{K}$ is $\|\mathbf{T}_{2}(K)\|_{\mathcal{H}_2}^2 = \Tr((Q_2+K^\tr R_2 K)\hat{X}_K)$, where $\hat{X}_K$ is the unique solution to the Lyapunov equation
\begin{equation} \label{eq:H2-Lyapunov}
    A_K\hat{X}_K + \hat{X}_KA_K^\tr + W = 0.
\end{equation}
To enforce the $\mathcal{H}_\infty$ constraint in \eqref{eq:mix-feasible-set}, we instead consider the stabilizing solution $X_K$ to the Riccati equation \cite{khargonekar1991mixed,glover1988state,bernstein1989lqg} 
\begin{equation} \label{eq:mix-2ch-Riccati-XK}
    A_KX_K + X_KA_K^\tr + \beta^{-2}X_K S_K X_K + W = 0,
\end{equation}
where $S_K\!:=\!Q_\infty\!+\!K^\tr R_\infty K$. 
By the bounded real lemma (see Lemma~\ref{lem:BRL-X}), a policy $K\in\mathcal{K}_\beta$ if and only if \eqref{eq:mix-2ch-Riccati-XK} admits a unique stabilizing solution $X_K \succeq 0$ such that the matrix $A_K+\beta^{-2}X_KS_K$ is Hurwitz.
Subtracting \eqref{eq:H2-Lyapunov} from \eqref{eq:mix-2ch-Riccati-XK} gives  
\begin{align*}
    A_K(X_K-\hat{X}_K) &+ (X_K-\hat{X}_K)A_K^\tr + \beta^{-2}X_KS_K X_K=0.
\end{align*}
Since $A_K$ is Hurwitz and the last term is positive semidefinite, it follows that $X_K \!-\! \hat{X}_K \!\succeq\! 0$, and hence the $\mathcal{H}_2$ cost admits an upper bound 
$
    \|\mathbf{T}_{2}(K)\|_{\mathcal{H}_2}^2 \leq \Tr((Q_2+K^\tr R_2 K)X_K)
$ for all $K \in \mathcal{K}_\beta$.
Therefore, we define the mixed cost
\begin{equation} \label{eq:mix-cost-2ch}
    J_{\mathrm{mix}}(K) := \Tr((Q_2+K^\tr R_2 K)X_K), \quad \forall K\in\mathcal{K}_\beta,
\end{equation}
where $X_K$ is the unique stabilizing solution to \eqref{eq:mix-2ch-Riccati-XK}. 

In sum, we consider the following mixed design \cite{khargonekar1991mixed,glover1988state,bernstein1989lqg}
\begin{equation} \label{eq:mix-2ch-problem}
    \inf_{K \in \mathcal{K}_{\beta}}\  J_{\mathrm{mix}}(K).  
\end{equation}
Unlike the $\mathcal{H}_2$ cost, $J_{\mathrm{mix}}$ is evaluated using the Riccati solution $X_K$ from \eqref{eq:mix-2ch-Riccati-XK} rather than the Lyapunov solution $\hat{X}_K$ from \eqref{eq:H2-Lyapunov}.
Since the cost and constraint in \eqref{eq:mix-2ch-problem} are in general defined by distinct signals $z_2$ and $z_\infty$, we refer to this formulation as the \textit{two-channel} mixed $\mathcal{H}_2$/$\mathcal{H}_\infty$ design.

We make the following standard assumption throughout. 
\begin{assumption} \label{assumption:1}
    $(A, B)$ is stabilizable and $(Q_2^{1/2}, A)$ is detectable. In addition, the robustness parameter satisfies $\beta>\beta^\ast:=\inf_{K\in\mathcal{K}}\|\mathbf{T}_{\infty}(K)\|_{\calH_\infty}$.
\end{assumption}

We make another assumption for analysis \cite{fatkhullin2021optimizing,mohammadi2021convergence}.
\begin{assumption} \label{assumption:positive-definiteness}
    The matrix $Q_2$ is positive semidefinite. The matrices $W$, $Q_\infty$, $R_2$, and $R_\infty$ are positive definite.
\end{assumption}

\begin{remark}[Single-channel case]
    When the $\mathcal{H}_2$ and $\mathcal{H}_\infty$ channels share the same performance output, $J_{\mathrm{mix}}$ in \eqref{eq:mix-cost-2ch} simplifies. Specifically, let $Q\!:=\!Q_2\!=\!Q_\infty$ and $R\!:=\!R_2\!=\!R_\infty$ in \eqref{eq:transfer-function-Tz2w} and \eqref{eq:transfer-function-Tziw}, so that $z_2=z_\infty$.
    Under Assumption~\ref{assumption:1},
    \begin{subequations} \label{eq:mix-1ch-cost-2}
    \begin{equation} \label{eq:mix-1ch-cost-equivalence}
        {J}_{\mathrm{mix}}(K) = \operatorname{tr}({P}_KW), \quad \forall K\in\mathcal{K}_\beta,
    \end{equation}
    where ${P}_K$ is the stabilizing solution to the Riccati equation
    \begin{align}  \label{eq:mix-1ch-Riccati-PK}
        A_K^\tr{P}_K\!+\!{P}_KA_K\!+\!\beta^{-2}{P}_KW{P}_K\!+\!Q\!+\!K^\tr R K&\!=\!0
    \end{align}
    with $A_K\!+\!\beta^{-2}WP_K$ Hurwitz.
    \end{subequations}
   We give further details in Appendix~\ref{proof:alternative-cost-single-channel}.  
    As we will see, this reformulation yields a simpler gradient formula and a closed-form optimum. \hfill $\qed$
\end{remark}

\subsection{Problem statement} \label{subsec:problem-statement}

Classical solutions to mixed $\mathcal{H}_2/\mathcal{H}_\infty$ design primarily rely on Riccati equations or LMIs. While effective for small- to medium-scale systems, these techniques offer limited insight into the optimization landscape, and may face challenges in high-dimensional or data-driven settings.

In this work, we revisit the mixed \( \mathcal{H}_2/\mathcal{H}_\infty \) control in \eqref{eq:mix-2ch-problem} from a nonconvex optimization perspective. Our goal is to uncover key structural properties and lay a theoretical foundation for policy optimization, focusing on three aspects:

\begin{enumerate}
    \item \textbf{Geometry of the optimization landscape.} We study the geometry of the $\mathcal{H}_\infty$-constrained feasible domain \( \mathcal{K}_\beta \) in \eqref{eq:mix-feasible-set}, and analyze key properties of the mixed cost function in \eqref{eq:mix-cost-2ch}, including continuity, analyticity, and gradient expressions.

    \item \textbf{Global optimality of stationary points.} We investigate whether problem \eqref{eq:mix-2ch-problem} admits spurious stationary points.
    We further derive the optimality conditions, and analyze the existence and uniqueness of stationary points. 

    \item \textbf{Analysis via the \ECL{} framework.} We explore if the hidden convexity in problem \eqref{eq:mix-2ch-problem} can be revealed via an appropriate convex lifting. In particular, we ask if an extended convex lifting (\ECL{}) can be constructed to certify the global optimality of stationary points.
\end{enumerate}

Collectively, our results provide a renewed understanding of mixed $\calH_2/\calH_\infty$ control through nonconvex optimization and convex lifting, and offer guidance for the design of principled policy optimization algorithms in large-scale or model-free settings.

\section{Basic Landscape Properties} \label{section:opt-landscape}

In this section, we investigate the optimization landscape of \eqref{eq:mix-2ch-problem}, i.e., the geometry of the feasible set $\mathcal{K}_\beta$ and structural properties of the mixed cost function $J_{\mathrm{mix}}$.

\subsection{Geometry of the $\mathcal{H}_\infty$ constrained domain} \label{subsec:property-mix-feasible-set}

We start with a version of the \textit{Bounded Real Lemma}, which connects a Riccati equation (or inequality) to an upper bound on the $\calH_\infty$ norm. Here, we slightly abuse the notation for the matrices $A$, $B$, and $C$, but this should cause no confusion in the context. 

\begin{lemma}[Bounded real lemma {\cite[Corollary 13.24]{zhou1996robust}}] \label{lem:BRL-X}
Consider the transfer function $\mathbf{G}(s) = C(sI - A)^{-1}B$, with $A\in\mathbb{R}^{n\times n}$, $B\in\mathbb{R}^{n\times m}$, and $C\in\mathbb{R}^{p\times n}$. Then, for any $\beta>0$, the following statements~are~equivalent. 
\begin{enumerate}
    \item The matrix $A$ is Hurwitz and $\|\mathbf{G}(s)\|_{\calH_\infty} < \beta$.
    
    \item There exists an $X\succ 0$ satisfying the Riccati inequality
    \begin{equation} \label{eq:Riccati-Hinf-strict}
        A X + X A^\tr + \beta^{-2}X C^\tr C X + BB^\tr \prec 0. 
    \end{equation}

    \item The Riccati equation
    \begin{equation} \label{eq:Hinf-Reccati-equation}
        A X + X A^\tr + \beta^{-2}X C^\tr C X + BB^\tr = 0
    \end{equation}
    admits a unique stabilizing solution $X$ (i.e., $A + \beta^{-2}XC^\tr C$ is Hurwitz) satisfying $X\succeq0$. If $(A,B)$ is further controllable, we have $X \succ 0$.
\end{enumerate}
\end{lemma}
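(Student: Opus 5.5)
We would prove the cycle (1)$\Rightarrow$(3)$\Rightarrow$(2)$\Rightarrow$(1). Throughout we work with the dual (filter-type) form of the standard bounded real lemma, obtained by applying the usual statement to $\mathbf{G}^\tr(s)=B^\tr(sI-A^\tr)^{-1}C^\tr$. This transposed system has the same $\calH_\infty$ norm, and its Riccati equation in the variable $X$ is exactly \eqref{eq:Hinf-Reccati-equation}.

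\textbf{(2)$\Rightarrow$(1).} Suppose $X\succ0$ satisfies \eqref{eq:Riccati-Hinf-strict}. Then $AX+XA^\tr\prec -BB^\tr-\beta^{-2}XC^\tr CX\preceq 0$, so $X$ is a Lyapunov certificate for $A^\tr$, and hence $A$ is Hurwitz.

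For the norm bound, take $\omega\in\mathbb{R}$ and set $\Phi=(j\omega I-A)^{-1}$. Write the inequality as $-(j\omega I-A)X-X(j\omega I-A)^* + \beta^{-2}XC^\tr CX+BB^\tr \prec 0$. Multiply on the left by $C\Phi$ and on the right by $\Phi^*C^\tr$, and complete the square. This yields $\mathbf{G}\mathbf{G}^*\prec\beta^2 I$ pointwise. Since the inequality is strict, it has a uniform margin, and because $\mathbf{G}(j\omega)\to0$ as $|\omega|\to\infty$, taking the supremum gives $\|\mathbf{G}\|_{\calH_\infty}<\beta$.

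\textbf{(1)$\Rightarrow$(3).} We form the Hamiltonian matrix
\[
H=\begin{bmatrix} A^\tr & \beta^{-2}C^\tr C\\ -BB^\tr & -A\end{bmatrix}.
\]
Using $A$ Hurwitz and $\|\mathbf{G}\|_\infty<\beta$, we show $H$ has no imaginary-axis eigenvalues. Suppose $H v=j\omega v$ with $v\neq 0$. Eliminating the components of $v$ produces a vector $u\neq0$ with $(\beta^2 I-\mathbf{G}^*\mathbf{G}(j\omega))u=0$ or its transposed analogue, which contradicts $\|\mathbf{G}\|_\infty<\beta$.

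Next we need $H\in\mathrm{dom}(\mathrm{Ric})$. The stable invariant subspace must have the complementarity property, i.e., its basis $\begin{bmatrix}X_1\\X_2\end{bmatrix}$ must have $X_1$ invertible. We prove this by the standard argument: first show that $X_1^*X_2$ is Hermitian, then that $\ker X_1$ is $H$-invariant. Stability of $A$ rules out a nonzero kernel. Setting $X=X_2X_1^{-1}$ gives the stabilizing solution of \eqref{eq:Hinf-Reccati-equation}; uniqueness of stabilizing solutions is standard.

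For $X\succeq0$, rewrite the equation as a Lyapunov equation for the Hurwitz matrix $A$:
\[
AX+XA^\tr = -(BB^\tr+\beta^{-2}XC^\tr CX)\preceq 0 .
\]
Hence $X=\int_0^\infty e^{At}(BB^\tr+\beta^{-2}XC^\tr CX)e^{A^\tr t}dt\succeq 0$. If $(A,B)$ is controllable, this integral dominates the controllability Gramian, so $X\succ0$.

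\textbf{(3)$\Rightarrow$(2).} This is the perturbation step, and we expect it to be the main obstacle, since the solution $X$ is only semidefinite and the equation holds with equality. First we check that (3) forces $A$ to be Hurwitz. From (3), $A$ is similar in spirit to a closed-loop matrix, so we argue via Lyapunov: the relation
\[
(A+\beta^{-2}XC^\tr C)X + X(A+\beta^{-2}XC^\tr C)^\tr = -BB^\tr+\beta^{-2}XC^\tr CX
\]
is indefinite, so this route does not directly work. Instead we pass through (1): from the stabilizing solution, a factorization $\beta^2 I-\mathbf{G}^\sim\mathbf{G}=\mathbf{M}^\sim\mathbf{M}$ with $\mathbf{M}$ invertible on $j\mathbb{R}$ gives the norm bound, provided $A$ is Hurwitz. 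Hurwitzness of $A$ follows from $X\succeq0$ together with $AX+XA^\tr\preceq0$ and a detectability-type argument; we plan to invoke the textbook source for this step.

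Given (1), we obtain (2) by solving the perturbed equation with $BB^\tr+\epsilon I$ in place of $BB^\tr$. For small $\epsilon$ the norm remains below $\beta$, and the pair $(A,[B,\sqrt{\epsilon}I])$ is controllable. By the previous step, the perturbed solution $X_\epsilon\succ0$ satisfies the Riccati equation with residual $-\epsilon I\prec0$, which is exactly \eqref{eq:Riccati-Hinf-strict}. Continuity of $\|\cdot\|_\infty$ in $\epsilon$ is the only analytic detail that needs checking.
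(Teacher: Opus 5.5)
The paper gives no proof of this lemma; it quotes it from Zhou--Doyle--Glover. Your reconstruction follows the standard textbook route, and most of it is sound:
\begin{itemize}
\item the Hamiltonian argument for (1)$\Rightarrow$(3): no imaginary-axis eigenvalues, and complementarity via $H_-$-invariance of $\ker X_1$, with stability of $A$ excluding a nonzero kernel;
\item the Lyapunov-integral argument for $X\succeq0$, and for $X\succ0$ under controllability;
\item the perturbation $BB^\tr\to BB^\tr+\epsilon I$ giving (1)$\Rightarrow$(2).
\end{itemize}

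\textbf{The gap.} The missing step is the one you defer: showing that (3) forces $A$ to be Hurwitz. Your suggested justification does not work as stated. The facts $X\succeq0$ and $AX+XA^\tr\preceq0$ alone say nothing: take $B=0$, $X=0$, and any $A$. What excludes unstable modes is the stabilizing property. Since (3) does not assume stability of $A$, you must supply this argument, and it is short. Suppose $A^\tr v=\lambda v$ with $v\neq0$ and $\mathrm{Re}\,\lambda\ge0$. Multiplying \eqref{eq:Hinf-Reccati-equation} by $v^*$ on the left and $v$ on the right gives
\[
2\,\mathrm{Re}(\lambda)\,v^*Xv=-\|B^\tr v\|^2-\beta^{-2}\|CXv\|^2 .
\]
The left side is $\ge0$ and the right side is $\le0$, so $CXv=0$. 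Then $(A+\beta^{-2}XC^\tr C)^\tr v=\lambda v$, contradicting that $A+\beta^{-2}XC^\tr C$ is Hurwitz.

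\textbf{The factorization step.} Once $A$ is Hurwitz, make the factorization explicit. For the dual equation it is $\beta^2I-\mathbf{G}\mathbf{G}^\sim=\mathbf{M}\mathbf{M}^\sim$ (not $\mathbf{G}^\sim\mathbf{G}$), with
\[
\mathbf{M}(s)=\beta I-\beta^{-1}C(sI-A)^{-1}XC^\tr .
\]
This follows by substituting $BB^\tr=(sI-A)X+X(-sI-A^\tr)-\beta^{-2}XC^\tr CX$. The inverse $\mathbf{M}^{-1}$ has state matrix $A+\beta^{-2}XC^\tr C$, so $\mathbf{M}^{-1}\in\mathcal{RH}_\infty$. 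Hence $\beta^2I-\mathbf{G}(j\omega)\mathbf{G}(j\omega)^*\succeq\|\mathbf{M}^{-1}\|_{\calH_\infty}^{-2}I$ uniformly in $\omega$, which is the strict bound.

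\textbf{A smaller point in (2)$\Rightarrow$(1).} Congruence by $C\Phi$, which need not have full row rank, only preserves the non-strict inequality. So ``$\mathbf{G}\mathbf{G}^*\prec\beta^2I$ pointwise'' does not follow as written. It is cleaner to use strictness first. Inequality \eqref{eq:Riccati-Hinf-strict} still holds with $\beta$ replaced by some $\beta'<\beta$. The non-strict completed square then gives $\mathbf{G}\mathbf{G}^*\preceq\beta'^2I$ on the whole imaginary axis, so $\|\mathbf{G}\|_{\calH_\infty}\le\beta'<\beta$ with no limiting argument.
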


From \cite[Corollary 13.13]{zhou1996robust}, the stabilizing solution $X$ to \eqref{eq:Hinf-Reccati-equation} is also \textit{minimal}, i.e., $X\preceq \hat{X}$ for any other symmetric solution $\hat{X}$ to \eqref{eq:Hinf-Reccati-equation}.
This property clarifies the stabilizing solution in defining the mixed cost function \eqref{eq:mix-cost-2ch}, as it provides the tightest upper bound on the corresponding $\calH_2$ norm. 

Lemma~\ref{lem:BRL-X} plays a crucial role in understanding the feasible set and cost function of the mixed $\mathcal{H}_2/\mathcal{H}_\infty$ design \eqref{eq:mix-2ch-problem}. 
We summarize several basic properties of  $\mathcal{K}_\beta$ below. 

\begin{lemma} \label{lemma:path-connectedness}
    Suppose Assumptions~\ref{assumption:1} and \ref{assumption:positive-definiteness} hold. The set $\mathcal{K}_\beta$ in \eqref{eq:mix-feasible-set} satisfies the properties below:
    \begin{enumerate}
        \item It is always nonempty, open, and path-connected;  
        \item It is, in general, nonconvex and unbounded.
    \end{enumerate}
\end{lemma}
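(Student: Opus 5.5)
Nonemptiness follows directly from Assumption~\ref{assumption:1}. Since $\beta>\beta^\ast=\inf_{K\in\mathcal{K}}\|\mathbf{T}_\infty(K)\|_{\calH_\infty}$, there is some $K\in\mathcal{K}$ with $\|\mathbf{T}_\infty(K)\|_{\calH_\infty}<\beta$, so $\mathcal{K}_\beta\neq\emptyset$.

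For openness, I will use the strict Riccati inequality in Lemma~\ref{lem:BRL-X}. If $K\in\mathcal{K}_\beta$, item 2 gives $X\succ0$ with
\[
A_KX+XA_K^\tr+\beta^{-2}XS_KX+W\prec0 .
\]
The left-hand side is continuous in $K$ for this fixed $X$. Hence the strict inequality persists for all $K'$ in a neighborhood of $K$, and Lemma~\ref{lem:BRL-X} (item 2 $\Rightarrow$ item 1) places that neighborhood inside $\mathcal{K}_\beta$.

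For path-connectedness, I will use a change of variables to a convex set. Take $X\succ0$ and $Y=KX$. The Riccati inequality becomes
\[
AX+XA^\tr+BY+Y^\tr B^\tr+\beta^{-2}\begin{bmatrix}X\\Y\end{bmatrix}^\tr\begin{bmatrix}Q_\infty&0\\0&R_\infty\end{bmatrix}\begin{bmatrix}X\\Y\end{bmatrix}+W\prec0 .
\]
By a Schur complement, this is an LMI in $(X,Y)$, so
\[
\mathcal{F}=\{(X,Y): X\succ0,\ \text{the LMI holds}\}
\]
is convex and therefore path-connected. The map $(X,Y)\mapsto YX^{-1}$ is continuous on $\mathcal{F}$. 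By Lemma~\ref{lem:BRL-X} its image is exactly $\mathcal{K}_\beta$: every $K\in\mathcal{K}_\beta$ arises from some $X$, and conversely every image point satisfies the strict Riccati inequality. A continuous image of a path-connected set is path-connected. The step needing the most care is verifying that this equivalence is exact, in particular that $X\succ0$ (not merely $\succeq0$) is available; item 2 of Lemma~\ref{lem:BRL-X} provides precisely this.

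For unboundedness, I will exhibit explicit systems, since the claim concerns the general case. For example, take $n=m=1$, $A=-1$, $B=1$, and consider $K\to-\infty$. The closed-loop $\calH_\infty$ norm of $\sqrt{q+rK^2}/(s+1-K)$ is its value at $s=0$, namely $\sqrt{q+rK^2}/(1-K)$, which tends to $\sqrt r$. Choosing $\beta>\max(\sqrt r,\beta^\ast)$ together with a modest uniform bound shows that all sufficiently negative $K$ are feasible, so $\mathcal{K}_\beta$ is unbounded. For nonconvexity, I will use the standard counterexample showing that $\mathcal{K}$ is nonconvex for $n\ge2$ (e.g., $A=0$, $B=I_2$, with two stabilizing $K$ whose midpoint has a nonnegative eigenvalue) and take $\beta$ large. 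Each of these $K$ has finite $\calH_\infty$ norm, so both lie in $\mathcal{K}_\beta$, while their midpoint is not even stabilizing. The main obstacle is only the bookkeeping in checking these examples against Assumption~\ref{assumption:1}, which is routine.
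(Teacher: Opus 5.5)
Your proof is correct and follows the paper's argument for part~1. Nonemptiness comes from $\beta>\beta^\ast$; openness comes from reusing the same $X\succ0$ in the strict Riccati inequality for nearby $K'$; and path-connectedness comes from the Schur complement with $Y=KX$ and the continuous map $(X,Y)\mapsto YX^{-1}$ from a convex set onto $\mathcal{K}_\beta$.

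For part~2 your examples differ from the paper's single $2\times2$ instance in Example~\ref{example:mix-set-nonconvexity-unbounded}, and they are valid and fully analytic. In the paper's example the midpoint $K_3$ is still stabilizing, so there the nonconvexity is caused by the $\mathcal{H}_\infty$ constraint itself at $\beta=3.5$ (verified numerically). Your example instead inherits nonconvexity from $\mathcal{K}$ by taking $\beta$ large. Either suffices for the claim that $\mathcal{K}_\beta$ is in general nonconvex.
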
 
\begin{proof}
    The nonemptiness of $\mathcal{K}_\beta$ follows directly from Assumption~\ref{assumption:1}.
    To establish openness, we apply Lemma~\ref{lem:BRL-X} to the transfer function $\mathbf{T}_{\infty}(K)$. This ensures that $K\in\mathcal{K}_\beta$ if and only if there exists $X\succ0$ (the positive definiteness is ensured by $W\succ0$) satisfying the strict Riccati inequality
    \begin{equation} \label{eq:BRL-Riccati-inequality-feedback-X-main}
        A_KX + XA_K^\tr + \beta^{-2}XS_K X + W \prec 0. 
    \end{equation}
    Since the inequality is strict, the same $X\succ0$ ensures that the inequality holds for all $K'$ in a sufficiently small neighborhood around $K$. Hence, $\mathcal{K}_\beta$ is open. 

    Path-connectivity can be established using a strategy similar to that in \cite[Section III A]{hu2022connectivity}. By applying the Schur complement to \eqref{eq:BRL-Riccati-inequality-feedback-X-main} and introducing the change of variables $K=YX^{-1}$, we see that $K\in\mathcal{K}_\beta$ if and only if there exist matrices $X\succ0$ and $Y$ satisfying $\mathbb{F}_{\mathrm{cvx}}\prec0$, where
    \begin{equation}\label{eq:mix-feasible-set-connectivity}
        \scalebox{0.85}{$\mathbb{F}_{\mathrm{cvx}}\!:=\!
        \begin{bmatrix}
        AX + XA^\tr + BY + Y^\tr B^\tr + W & X Q_\infty^{1/2} & Y^\tr R_\infty^{1/2} \\
        Q_\infty^{1/2} X & -\beta^2 I & 0 \\
        R_\infty^{1/2} Y & 0 & -\beta^2 I
        \end{bmatrix}
        $.}
    \end{equation}
    The above inequalities define a convex set in the lifted variables $(X,Y)$, and since the map $K=YX^{-1}$ is continuous, it follows that $\mathcal{K}_\beta$ is path-connected.

    While the nonconvexity of $\mathcal{K}_\beta$ is well-known, its unboundedness has not been discussed\footnote{The set $\mathcal{K}_\beta$ in the discrete-time is bounded due to the coercivity of the discrete-time $\calH_\infty$ function~\cite{guo2022global}.}. The example below illustrates the nonconvexity and unboundedness of $\mathcal{K}_\beta$.
\end{proof}

\begin{figure*}
    \centering
    \subfigure[Nonconvexity of $\mathcal{K}_\beta$]{%
        \includegraphics[width=0.27\textwidth]{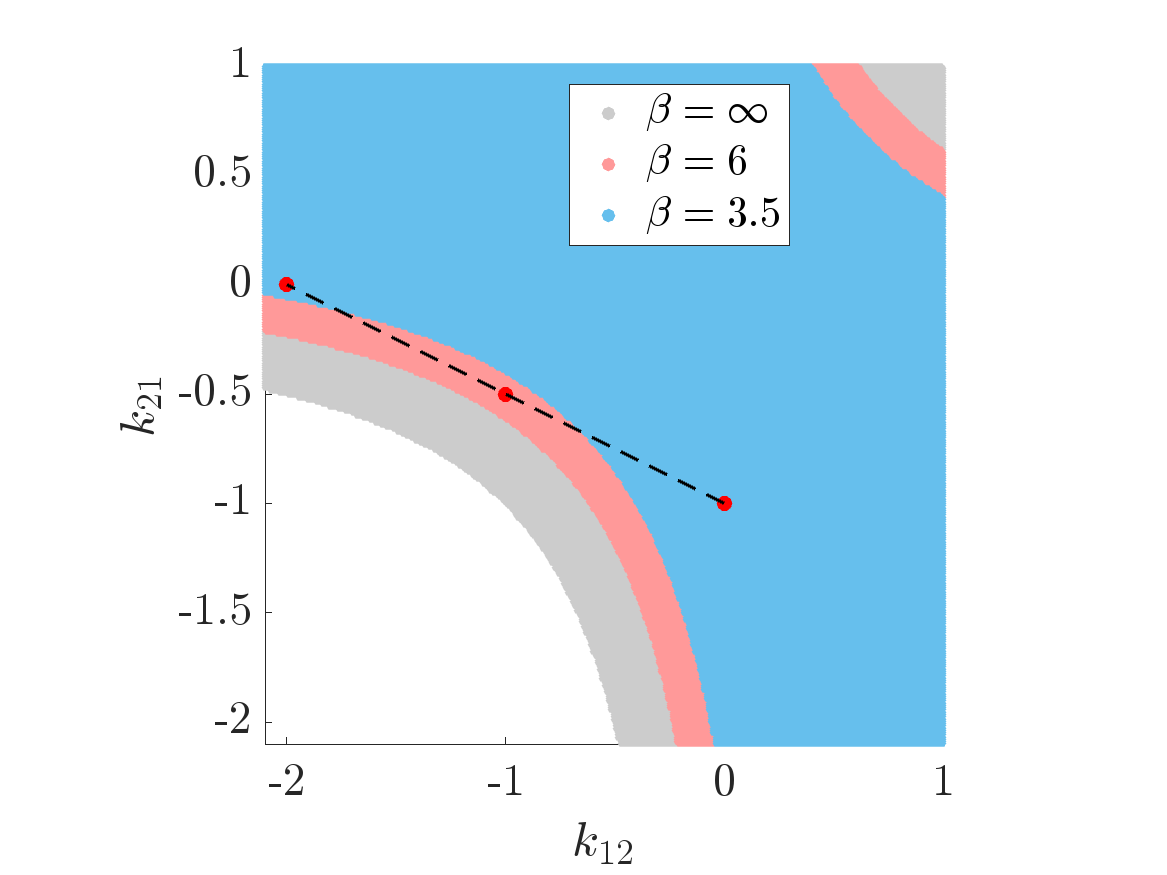}
        \label{fig:mixed-feasible-set}
    }\hspace{4mm}
    \subfigure[Nonconvex noncoercive $J_{\mathrm{mix}}$]{%
        \includegraphics[width=0.27\textwidth]{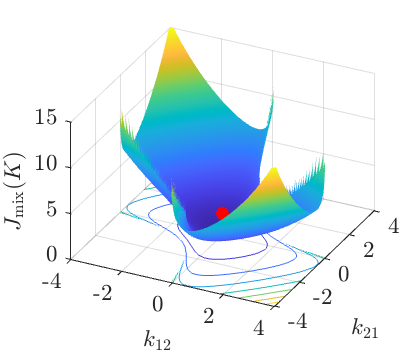}
        \label{fig:mix-cost-not-coercive}
    }\hspace{4mm}
    \subfigure[Nonconvex coercive $J_{\mathrm{LQR}}$]{%
        \includegraphics[width=0.27\textwidth]{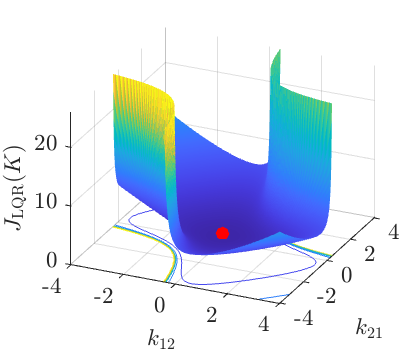}
        \label{fig:lqr-cost-coercive}
    }
    \caption{(a) Nonconvex $\mathcal{K}_{\beta}$ with $k_{11}=k_{22}=0$ for different $\beta$ in Example~\ref{example:mix-set-nonconvexity-unbounded}. 
    (b)-(c) Nonconvexity and (non)coercivity of the costs in Example~\ref{example:mix-cost-nonconvex-noncoercive}, with feasible sets $\mathcal{K}_{3.5}$ and $\mathcal{K}_{\infty}$ for (b) and (c) respectively. Red dots highlight global minima.}
    \label{fig:non-convexity-illustration}
\end{figure*}

\begin{example}
\label{example:mix-set-nonconvexity-unbounded}
    Consider
    $ 
    A=-I_2,\; B\!=\!B_w\!=\!Q_\infty\!=\!R_\infty=I_2,
    $ 
    and policies of the form $K=\left[\begin{smallmatrix}
        k_{11} & k_{12} \\ k_{21} & k_{22}
    \end{smallmatrix}\right]$ for $\mathcal{K}_\beta$ in \eqref{eq:mix-feasible-set}.
   Let $\beta=3.5$. We numerically verify that
    \[
        K_1=\begin{bmatrix}0 & 0 \\ -1 & 0 \end{bmatrix}\in\mathcal{K}_{3.5},\quad  K_2=\begin{bmatrix} 0 & -2 \\ 0 & 0 \end{bmatrix}\in\mathcal{K}_{3.5},
    \]
    but $K_3=\frac{1}{2}(K_1+K_2)\notin \mathcal{K}_\beta$.
    To see the unboundedness of $\mathcal{K}_\beta$, we analytically compute the $\calH_\infty$ norm below  
    \[
        \|\mathbf{T}_{\infty}(K)\|_{\mathcal{H}_\infty}=\frac{\sqrt{k^2+1}}{1-k}<1.1, \quad \forall K=k\!\cdot\! I_2,\ k<0.
    \]
    This implies that for any $\beta>1.1$, $\mathcal{K}_\beta$ is unbounded.
    
    Fig.~\ref{fig:mixed-feasible-set} plots $\mathcal{K}_\beta$ restricted to the subspace $k_{11}=k_{22}=0$ for $\beta=3.5$, $6$, and $\infty$ (i.e., the unconstrained stabilizing set $\mathcal{K}$). The policies $K_1$, $K_2$, and $K_3$, marked in red, demonstrate the nonconvexity for $\beta=3.5$. As expected, the feasible set $\mathcal{K}_\beta$ becomes larger as $\beta$ increases.
    \hfill $\qed$
\end{example}

We next characterize the boundary of $\mathcal{K}_\beta$. This will be useful as we later discuss the properties of $J_\mathrm{mix}$.

\begin{theorem} \label{prop:closure}
    Let $\mathrm{cl}({\mathcal{K}_\beta})$ denote the closure of $\mathcal{K}_\beta$ defined in \eqref{eq:mix-feasible-set}. Under Assumptions~\ref{assumption:1} and \ref{assumption:positive-definiteness}, we have 
    $$
    \mathrm{cl}({\mathcal{K}_\beta}) = \left\{K\in\mathcal{K} \mid \|\mathbf{T}_{\infty}(K)\|_{\mathcal{H}_\infty} \leq \beta\right\}.
    $$
\end{theorem}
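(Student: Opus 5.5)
The plan is to prove the two inclusions separately. Write $\overline{\mathcal{K}}_\beta:=\{K\in\mathcal{K}\mid \|\mathbf{T}_{\infty}(K)\|_{\mathcal{H}_\infty}\le\beta\}$.

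\textbf{Inclusion $\mathrm{cl}(\mathcal{K}_\beta)\subseteq\overline{\mathcal{K}}_\beta$.} Take $K_j\in\mathcal{K}_\beta$ with $K_j\to K$. The only real issue is to show that the limit $K$ is still stabilizing. By continuity of the spectrum, $A_K$ has all eigenvalues in the closed left half plane. Suppose $A_K$ had an eigenvalue $i\omega$ on the imaginary axis. By Assumption~\ref{assumption:positive-definiteness}, $Q_\infty\succ0$ and $W=B_wB_w^\tr\succ0$, so $B_w$ is invertible. Hence, for every $j$,
\[
\|\mathbf{T}_{\infty}(K_j)\|_{\mathcal{H}_\infty}\ \ge\ \sigma_{\min}(Q_\infty^{1/2})\,\sigma_{\min}(B_w)\,\big\|(i\omega I-A_{K_j})^{-1}\big\|_2 .
\]
Here I use that the top block row of $\mathbf{T}_\infty$ is $Q_\infty^{1/2}(sI-A_{K_j})^{-1}B_w$. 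The right-hand side equals $c/\sigma_{\min}(i\omega I-A_{K_j})$ for some constant $c>0$. Since $\sigma_{\min}(i\omega I-A_{K_j})\to\sigma_{\min}(i\omega I-A_K)=0$, this tends to $\infty$, which contradicts $\|\mathbf{T}_\infty(K_j)\|_{\mathcal{H}_\infty}<\beta$ for all $j$. So $K\in\mathcal{K}$. On the open set $\mathcal{K}$ the $\mathcal{H}_\infty$ norm is continuous in $K$, so $\|\mathbf{T}_\infty(K)\|_{\mathcal{H}_\infty}\le\beta$.

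\textbf{Inclusion $\overline{\mathcal{K}}_\beta\subseteq\mathrm{cl}(\mathcal{K}_\beta)$.} It suffices to treat $K\in\mathcal{K}$ with $\|\mathbf{T}_\infty(K)\|_{\mathcal{H}_\infty}=\beta$. The idea is to use the convex lifting from the proof of Lemma~\ref{lemma:path-connectedness}, with a non-strict certificate.

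\emph{Step 1 (non-strict certificate).} Produce $X\succ0$ satisfying the non-strict Riccati inequality $A_KX+XA_K^\tr+\beta^{-2}XS_KX+W\preceq0$. Setting $Y=KX$ and applying a Schur complement, this gives $\mathbb{F}_{\mathrm{cvx}}(X,Y)\preceq0$ in \eqref{eq:mix-feasible-set-connectivity}.

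\emph{Step 2 (strict anchor).} By Assumption~\ref{assumption:1}, pick any $K_0\in\mathcal{K}_\beta$ with a strict certificate $(X_0,Y_0)$, so that $X_0\succ0$ and $\mathbb{F}_{\mathrm{cvx}}(X_0,Y_0)\prec0$.

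\emph{Step 3 (convex combination).} $\mathbb{F}_{\mathrm{cvx}}$ is affine in $(X,Y)$. Hence for $t\in(0,1]$ the pair $(X_t,Y_t)=(1-t)(X,Y)+t(X_0,Y_0)$ satisfies $X_t\succ0$ and $\mathbb{F}_{\mathrm{cvx}}(X_t,Y_t)\prec0$. Therefore $K_t:=Y_tX_t^{-1}\in\mathcal{K}_\beta$, and $K_t\to YX^{-1}=K$ as $t\downarrow0$.

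\textbf{Main obstacle: Step 1.} The stabilizing Riccati solution may fail to exist at the boundary, because the Hamiltonian then has imaginary-axis eigenvalues. I will first try to invoke the non-strict bounded real lemma for stable $A_K$ with $\|\mathbf{T}_\infty(K)\|_{\mathcal{H}_\infty}\le\beta$. This yields a symmetric solution $X$ of the Riccati equation \eqref{eq:mix-2ch-Riccati-XK} (the strong/maximal-type solution) with $A_K+\beta^{-2}XS_K$ having no eigenvalues in the open right half plane. Because $A_K$ is Hurwitz, the Lyapunov argument used for $X_K-\hat X_K$ then gives $X\succeq\hat X_K\succ0$, where $\hat X_K\succ0$ follows from $W\succ0$.

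As a fallback, I will take limits. For $\beta_\epsilon=\beta+\epsilon$, let $X_\epsilon$ be the stabilizing solutions at $\beta_\epsilon$. These are monotone (nonincreasing in $\beta_\epsilon$), so they increase as $\epsilon\downarrow0$, and each satisfies $X_\epsilon\succeq\hat X_K\succ0$. I would bound them uniformly, for instance by comparison with a solution at a fixed $\beta$-level obtained from a frequency-domain (KYP) argument. Then a monotone bounded limit $X$ satisfies the Riccati equation at $\beta$ with $X\succ0$, which is exactly what Step 1 requires.
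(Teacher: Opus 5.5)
Your proof is correct, and it differs from the paper's mainly in the reverse inclusion.

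For $\mathrm{cl}(\mathcal{K}_\beta)\subseteq\{K\in\mathcal{K}\mid \|\mathbf{T}_\infty(K)\|_{\mathcal{H}_\infty}\le\beta\}$, the idea is the same as the paper's: the $\mathcal{H}_\infty$ norm blows up near marginally stabilizing gains. The paper gets this from a divergence lemma in \cite{zheng2024benign} together with minimality of the realization. Your lower bound $\sigma_{\min}(Q_\infty^{1/2})\sigma_{\min}(B_w)/\sigma_{\min}(i\omega I-A_{K_j})$ instead makes it self-contained, using exactly that $B_w$ is square and invertible and that $Q_\infty\succ0$.

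For the inclusion into the closure, the two routes genuinely differ. The paper argues by contradiction. A gain with $\|\mathbf{T}_\infty(K)\|_{\mathcal{H}_\infty}=\beta$ that had a neighborhood disjoint from $\mathcal{K}_\beta$ would be a local minimizer of the $\mathcal{H}_\infty$ norm. Since state-feedback $\mathcal{H}_\infty$ control has no spurious local minima \cite[Corollary 4.1]{zheng2024benign}, this would force $\beta=\beta^\ast$. You instead build an explicit curve $K_t=Y_tX_t^{-1}\in\mathcal{K}_\beta$ with $K_t\to K$. You get it by convexly combining a non-strict certificate at $K$ with a strict certificate at some $K_0\in\mathcal{K}_\beta$ inside the affine LMI $\mathbb{F}_{\mathrm{cvx}}$.

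The paper's route is shorter, but it is non-constructive and outsources the work to a nonsmooth landscape theorem, which is itself proved via a lifting with non-strict LMIs. Yours uses only two ingredients: the lifting already in Lemma~\ref{lemma:path-connectedness}, and the non-strict bounded real lemma. The paper needs the latter elsewhere too, e.g.\ for Lemma~\ref{lem:inequalities-for-J} and for defining $\tilde{J}_{\mathrm{mix}}$ on the boundary. Your argument also shows transparently that $\beta>\beta^\ast$ enters only through the existence of the strict anchor $K_0$.

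On Step 1, you do not need a solution of the Riccati \emph{equation}. Any symmetric $X$ satisfying the non-strict inequality already satisfies $X\succeq\hat{X}_K\succ0$ by the same Lyapunov comparison. So you can drop the fallback limit argument; its uniform bound would in any case presuppose a certificate at level $\beta$.

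If you cite a non-strict KYP or bounded real lemma stated for minimal realizations, note that minimality holds here because $B_w$ is invertible and $Q_\infty\succ0$. One terminology point: with the $+\beta^{-2}XS_KX$ sign convention, the strong solution is the minimal one, not a maximal one. This plays no role in your argument.
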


Theorem~\ref{prop:closure} may seem obvious, but the argument is in fact subtle. 
For instance, consider $f(x)=x^2(x+1)$ and $C_1 = \{x\in\mathbb{R} \mid f(x)< 0 \}$, $C_2 = \{x\in\mathbb{R} \mid f(x)\leq 0 \}$.
Clearly, $C_2 \nsubseteq \mathrm{cl}(C_1)$ since $0\in C_2$ cannot be approached by any sequence in $C_1$.
In fact, Theorem~\ref{prop:closure} relies on fundamental properties of the $\mathcal{H}_\infty$ norm to handle the above issue as well as marginally stabilizing policies.
We present the full details in Appendix~\ref{appendix:proof-boundary}. 

Theorem~\ref{prop:closure} implies that the boundary of $\mathcal{K}_\beta$, denoted $\partial{\mathcal{K}_\beta} :=\mathrm{cl}(\mathcal{K}_\beta)\!\setminus\!\mathcal{K}_\beta$, is the set of stabilizing policies whose $\mathcal{H}_\infty$ norm equals $\beta$. Note that $\partial{\mathcal{K}_\beta}$ may be unbounded.

\begin{corollary} \label{corollary:boundary-set}
    Under Assumptions~\ref{assumption:1} and \ref{assumption:positive-definiteness}, the boundary of $\mathcal{K}_\beta$ satisfies    $\partial{\mathcal{K}_\beta} \subseteq \left\{K\in\mathcal{K} \mid \|\mathbf{T}_{\infty}(K)\|_{\mathcal{H}_\infty} \!=\! \beta\right\}.
    $
\end{corollary}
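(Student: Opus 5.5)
The corollary follows almost directly from Theorem~\ref{prop:closure}, so the plan is to combine that closure characterization with the definition of the boundary. By definition, $\partial\mathcal{K}_\beta = \mathrm{cl}(\mathcal{K}_\beta)\setminus\mathcal{K}_\beta$. Note that $\mathcal{K}_\beta$ is open by Lemma~\ref{lemma:path-connectedness}, so this coincides with the topological boundary $\mathrm{cl}(\mathcal{K}_\beta)\setminus \mathrm{int}(\mathcal{K}_\beta)$. We therefore only need to describe elements of $\mathrm{cl}(\mathcal{K}_\beta)$ that are not in $\mathcal{K}_\beta$.

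Take any $K\in\partial\mathcal{K}_\beta$. Since $K\in\mathrm{cl}(\mathcal{K}_\beta)$, Theorem~\ref{prop:closure} gives $K\in\mathcal{K}$ and $\|\mathbf{T}_\infty(K)\|_{\mathcal{H}_\infty}\le\beta$. Since $K\notin\mathcal{K}_\beta$, the definition \eqref{eq:mix-feasible-set} says that either $K\notin\mathcal{K}$ or $\|\mathbf{T}_\infty(K)\|_{\mathcal{H}_\infty}\ge\beta$. The first alternative contradicts $K\in\mathcal{K}$, so the second must hold. Combining it with the upper bound yields $\|\mathbf{T}_\infty(K)\|_{\mathcal{H}_\infty}=\beta$, with $K\in\mathcal{K}$, which is exactly the claimed inclusion.

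There is essentially no obstacle here, since all the subtle work is done in Theorem~\ref{prop:closure}. That theorem is what rules out marginally stabilizing limit points, which would otherwise leave $\mathcal{K}$ with the $\mathcal{H}_\infty$ norm blowing up or the stability margin degenerating. The only point to be careful about is using the stabilizing requirement inside the closure characterization: it is what excludes boundary points with $A+BK$ merely marginally stable. Once the closure is known to lie inside $\mathcal{K}$, the result is just set algebra. If desired, the reverse inclusion (hence equality) also follows from Theorem~\ref{prop:closure}: any $K\in\mathcal{K}$ with norm exactly $\beta$ lies in the closure but not in $\mathcal{K}_\beta$.
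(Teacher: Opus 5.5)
Your proposal is correct and matches the paper's argument: the paper obtains Corollary~\ref{corollary:boundary-set} directly from Theorem~\ref{prop:closure} together with the definition $\partial\mathcal{K}_\beta := \mathrm{cl}(\mathcal{K}_\beta)\setminus\mathcal{K}_\beta$, which is exactly the set algebra you carry out. Your remark that the reverse inclusion also holds agrees with the paper's own comment that the boundary is precisely the set of stabilizing policies whose $\mathcal{H}_\infty$ norm equals $\beta$.
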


\subsection{Analyticity of the mixed $\mathcal{H}_2/\mathcal{H}_\infty$ cost function} \label{subsec:property-mix-cost-function}

We now examine some properties of the cost function. 
It is clear from Lemma~\ref{lemma:path-connectedness} that $J_{\mathrm{mix}}$ is nonconvex. Our next result shows that it is real analytic (and hence infinitely differentiable) on the feasible set $\mathcal{K}_\beta$.

\begin{theorem} \label{theorem:analytical-functions}
    Under Assumption~\ref{assumption:1}, the mixed $\mathcal{H}_2/\mathcal{H}_\infty$ cost function $J_{\mathrm{mix}}$ in \eqref{eq:mix-cost-2ch} is real analytic on $\mathcal{K}_\beta$.  
\end{theorem}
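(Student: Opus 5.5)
The plan is to apply the real analytic implicit function theorem to the Riccati equation \eqref{eq:mix-2ch-Riccati-XK}, viewed as an equation in $(K,X)$. Define the map $F:\mathbb{R}^{m\times n}\times\mathbb{S}^n\to\mathbb{S}^n$ by
\[
F(K,X) := A_KX + XA_K^\tr + \beta^{-2}X S_K X + W ,
\]
with $A_K=A+BK$ and $S_K=Q_\infty+K^\tr R_\infty K$. Every entry of $F$ is a polynomial in the entries of $(K,X)$, so $F$ is real analytic. For each $K\in\mathcal{K}_\beta$, Lemma~\ref{lem:BRL-X} applied to $\mathbf{T}_\infty(K)$ gives a unique stabilizing solution $X_K\succeq0$ with $F(K,X_K)=0$, and $A_K+\beta^{-2}X_KS_K$ is Hurwitz.

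\textbf{Invertibility of the partial derivative.} I will compute the partial derivative of $F$ in $X$ at $(K,X_K)$. It is the linear map
\[
\Delta\mapsto (A_K+\beta^{-2}X_KS_K)\Delta+\Delta(A_K+\beta^{-2}X_KS_K)^\tr ,
\]
which is a Lyapunov operator with the Hurwitz matrix $\bar A_K:=A_K+\beta^{-2}X_KS_K$. Its eigenvalues are $\lambda_i+\lambda_j$ with $\lambda_i,\lambda_j$ eigenvalues of $\bar A_K$. These all have negative real part, so the operator is invertible on $\mathbb{S}^n$. The analytic implicit function theorem then gives a neighborhood $U$ of $K$ and an analytic map $K'\mapsto \tilde X(K')$ on $U$ with $F(K',\tilde X(K'))=0$ and $\tilde X(K)=X_K$.

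\textbf{Identifying $\tilde X$ with the stabilizing solution.} This is the step I expect to be the main obstacle. The implicit function theorem only produces some solution branch, and I need $\tilde X(K')=X_{K'}$. I will shrink $U$ so that it lies inside $\mathcal{K}_\beta$, which is possible because $\mathcal{K}_\beta$ is open by Lemma~\ref{lemma:path-connectedness}. By continuity of $\tilde X$ and of eigenvalues, I can also shrink $U$ so that $A_{K'}+\beta^{-2}\tilde X(K')S_{K'}$ remains Hurwitz. Then $\tilde X(K')$ is a stabilizing solution of the Riccati equation for $K'$. Stabilizing solutions of a Riccati equation are unique, by the standard argument: subtracting two such equations yields a homogeneous Sylvester equation between two Hurwitz matrices, whose only solution is zero. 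Hence $\tilde X(K')=X_{K'}$, and this uniqueness is also part of Lemma~\ref{lem:BRL-X}. Consequently $K\mapsto X_K$ is real analytic on $\mathcal{K}_\beta$.

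\textbf{Conclusion.} Since $J_{\mathrm{mix}}(K)=\Tr((Q_2+K^\tr R_2K)X_K)$ is built from a polynomial in $K$ and the analytic map $K\mapsto X_K$ through products and a trace, it is a composition of real analytic maps. Therefore $J_{\mathrm{mix}}$ is real analytic on $\mathcal{K}_\beta$. Differentiating $F(K,X_K)=0$ in the same way also gives the Lyapunov equation for $\partial X_K$, which yields the gradient formulas later.
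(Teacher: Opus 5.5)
Your proposal is correct and follows essentially the same route as the paper: apply the real analytic implicit function theorem to the Riccati equation \eqref{eq:mix-2ch-Riccati-XK}, whose $X$-Jacobian at the stabilizing solution is the Lyapunov operator $I_n\otimes\tilde{A}_K+\tilde{A}_K\otimes I_n$ of the Hurwitz matrix $\tilde{A}_K=A_K+\beta^{-2}X_KS_K$. Your version is slightly more careful than the paper's on two points. First, you justify invertibility through the eigenvalues $\lambda_i+\lambda_j$, whereas the paper appeals only to invertibility of $\tilde{A}_K$. Second, you explicitly identify the implicit-function branch with the stabilizing solution, using continuity of eigenvalues and uniqueness of stabilizing solutions, a step the paper leaves implicit.
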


Note that $J_{\mathrm{mix}}$ is defined through the stabilizing solution $X_K$ of \eqref{eq:mix-2ch-Riccati-XK}, which admits no closed form. Instead, we use the Implicit Function Theorem \cite[Theorem 2.3.5]{krantz2002primer} to show that $X_K$ depends analytically on $K\in\mathcal{K}_\beta$. Thus, the cost $J_{\mathrm{mix}}$ is real analytic. Proof details are provided in Appendix~\ref{appendix:proof-analyticity}. 

Since $\mathcal{K}_\beta$ is open, the optimal value of problem \eqref{eq:mix-2ch-problem} may not be attained but only approached at the boundary $\partial \mathcal{K}_\beta$.
To capture this limiting behavior, we extend $J_{\mathrm{mix}}$ to $\partial \mathcal{K}_\beta$.
By Corollary~\ref{corollary:boundary-set}, any $K \in \partial \mathcal{K}_\beta$ satisfies $\|\mathbf{T}_{\infty}(K)\|_{\mathcal{H}_\infty} \!=\! \beta$, so Lemma~\ref{lem:BRL-X} no longer applies and \eqref{eq:mix-2ch-Riccati-XK} does not admit stabilizing solution.
Nonetheless, it is known that \eqref{eq:mix-2ch-Riccati-XK} still admits a unique \textit{minimal} solution $X_{K}$ with eigenvalues of $A_K+\beta^{-2}X_KS_K$ in the closed left half-plane \cite[Corollary 13.13, Lemma 13.17]{zhou1996robust}.
We thus define the boundary cost as
\begin{equation} \label{eq:cost-value-boundary}
\tilde{J}_{\mathrm{mix}}(K) = \Tr((Q_2+K^\tr R_2 K)X_K), \quad \forall K \in \partial\mathcal{K}_\beta,
\end{equation}
where $X_K$ is the \textit{minimal} solution to \eqref{eq:mix-2ch-Riccati-XK}. 

The above extension is continuous: as $K \!\in\! \mathcal{K}_\beta$ approaches a boundary point $K_0 \!\in\! \partial \mathcal{K}_\beta$, the cost converges to $\tilde{J}_{\mathrm{mix}}(K_0)$. 

\begin{lemma} \label{thm:continuity-mix-cost}
    Suppose Assumptions~\ref{assumption:1} and \ref{assumption:positive-definiteness} hold. Given any boundary point $K_0 \in \partial \mathcal{K}_\beta$, we have  
    $$
    \lim_{K\to{K}_0, K \in \mathcal{K}_\beta} {J}_{\mathrm{mix}}(K)=\tilde{J}_{\mathrm{mix}}(K_0).
    $$
\end{lemma}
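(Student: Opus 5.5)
Since $Q_2+K^\tr R_2K$ depends continuously on $K$, the statement reduces to showing that the Riccati solution itself is continuous up to the boundary:
\[
X_K\to X_{K_0}\quad\text{as } K\to K_0,\ K\in\mathcal{K}_\beta,
\]
where $X_K$ is the stabilizing solution of \eqref{eq:mix-2ch-Riccati-XK} for $K\in\mathcal{K}_\beta$ and $X_{K_0}$ is the minimal solution at $K_0\in\partial\mathcal{K}_\beta$. By Corollary~\ref{corollary:boundary-set}, $K_0\in\mathcal{K}$, so $A_{K_0}$ is Hurwitz and $\|\mathbf{T}_\infty(K_0)\|_{\mathcal{H}_\infty}=\beta$. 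I would use the standard characterization of the minimal solution as a limit of stabilizing solutions for slightly larger bounds: for $\gamma>\beta$, let $X_{K,\gamma}$ be the stabilizing solution of \eqref{eq:mix-2ch-Riccati-XK} with $\beta$ replaced by $\gamma$. Then $X_{K_0,\gamma}\downarrow X_{K_0}$ as $\gamma\downarrow\beta$; this is part of the content of \cite[Lemma 13.17]{zhou1996robust}, and can also be checked directly by monotonicity.

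\emph{Step 1 (boundedness).} I first show that $X_K$ stays bounded near $K_0$. By a comparison argument for Riccati equations (subtract the two equations and use that the difference satisfies a Lyapunov-type inequality with a Hurwitz matrix), $X_{K,\gamma}$ is nondecreasing as $\gamma$ decreases, so $X_K\succeq X_{K,\gamma}$ for every $\gamma>\beta$. An upper bound needs a different idea. Every limit point $\bar X$ of $X_K$ as $K\to K_0$ is a positive semidefinite solution of the Riccati equation at $K_0$, since the equation is continuous in $(K,X)$. The eigenvalues of $A_K+\beta^{-2}X_KS_K$ lie in the open left half-plane, so by continuity those of $A_{K_0}+\beta^{-2}\bar XS_{K_0}$ lie in the closed left half-plane. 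The minimal solution is the unique solution with this spectral property \cite[Corollary 13.13]{zhou1996robust}, so $\bar X=X_{K_0}$. If every sequence is bounded, this identification of limit points gives the result.

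\emph{Step 2 (ruling out blow-up).} This is where I expect the main obstacle. I would use the dual quadratic form. For any stabilizing $K$, $\langle \xi, X_K\xi\rangle$ is the optimal value of an $\mathcal{H}_\infty$-type problem, which I would express as a supremum over $L_2$ signals. Fix $\gamma'>\beta$. By continuity of the $\mathcal{H}_\infty$ norm on $\mathcal{K}$, $\|\mathbf{T}_\infty(K)\|_{\mathcal{H}_\infty}<\gamma'$ for $K$ near $K_0$. The aim is a bound of the form
\[
X_K\preceq c\,\hat X_K\big(1-\|\mathbf{T}_\infty(K)\|_{\mathcal{H}_\infty}^2/\beta^2\big)^{-1},
\]
which, however, degenerates at the boundary, so it cannot be the whole argument.

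The alternative I would try first is monotonicity in $K$ along a path. Take $K_t=K_0+t(K_1-K_0)$ with $K_1\in\mathcal{K}_\beta$; this segment may leave $\mathcal{K}_\beta$, so this needs care. More robustly, I would work with the sequences themselves: if $\|X_{K_j}\|\to\infty$, normalize $Z_j=X_{K_j}/\|X_{K_j}\|$. Dividing the Riccati equation by $\|X_{K_j}\|^2$ and passing to the limit gives $Z S_{K_0} Z=0$. Because $S_{K_0}\succ0$ (as $Q_\infty\succ0$), this forces $Z=0$, which contradicts $\|Z\|=1$. So boundedness follows from $Q_\infty\succ0$, and this normalization argument is what I would rely on. Together with the limit-point identification in Step 1, this proves $X_K\to X_{K_0}$, and hence the lemma.
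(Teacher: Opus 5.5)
Your argument is correct, but it follows a different route from the paper. The paper proves the lemma in one step. It observes that the stabilizing solution is the minimal one and invokes continuity of minimal Riccati solutions in the coefficients \cite[Theorem 11.2.1]{lancaster1995algebraic}, applied to $(A_K, W, \beta^{-2}S_K)$ on $\mathrm{cl}(\mathcal{K}_\beta)$. That theorem needs only a positive semidefinite quadratic term and stabilizability, but it is a heavy black box.

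Yours is a self-contained compactness argument. The normalization $Z_j=X_{K_j}/\|X_{K_j}\|$ yields $Z S_{K_0} Z=0$, so the a priori bound is immediate from $S_{K_0}\succeq Q_\infty\succ0$. Each cluster point is then a symmetric solution at $K_0$ whose closed-loop matrix has spectrum in the closed left half-plane, hence equals $X_{K_0}$ by uniqueness of such a solution.

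Do make sure your citation actually states that uniqueness, not only that the minimal solution has this spectral property. With $S_{K_0}\succ0$ it is in any case a short check. Set $\Delta:=\bar X-X_{K_0}\succeq0$ and $\bar A:=A_{K_0}+\beta^{-2}\bar X S_{K_0}$; then $\bar A\Delta+\Delta\bar A^\tr=\beta^{-2}\Delta S_{K_0}\Delta$ and $\ker\Delta$ is $\bar A^\tr$-invariant. On the range of $\Delta$ this produces eigenvalues of $\bar A$ with positive real part unless $\Delta=0$.

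Your route trades generality for elementarity: it relies on $Q_\infty\succ0$, which the general theorem does not need. On the other hand, it applies verbatim to sequences in $\mathrm{cl}(\mathcal{K}_\beta)$, so it recovers the full continuity of $K\mapsto X_K$ on the closure that the paper's appendix establishes. The preliminary detours (monotonicity in $\gamma$, the degenerate bound, the path argument) are never used and can be deleted.
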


The proof relies on a nontrivial continuity result for Riccati minimal solutions \cite[Theorem 11.2.1]{lancaster1995algebraic}, with details deferred to Appendix~\ref{appendix:proof-continuity}.
As a direct consequence of Lemma~\ref{thm:continuity-mix-cost}, the cost ${J}_{\mathrm{mix}}$ is non-coercive, remaining finite as $K$ with $\|K\| < \infty$ approaches the boundary of $\mathcal{K}_\beta$. 
Nevertheless, since $J_{\mathrm{mix}}$ serves as a pointwise upper bound on the coercive LQR cost, it exhibits \textit{partial} coercivity: $J_{\mathrm{mix}}(K) \to \infty$ as $\|K\| \to \infty$.

\begin{example}[Non-coercivity]\label{example:mix-cost-nonconvex-noncoercive}
    Consider the problem instance from Example~\ref{example:mix-set-nonconvexity-unbounded}. We set $\beta=3.5$ and $Q_2\!=\!R_2\!=\!I_2$ to define $J_{\mathrm{mix}}$. For comparison, we also consider the limiting case $\beta=\infty$, which reduces to the LQR cost $J_{\mathrm{LQR}}$. The global optima are $K^\ast=-0.4193 I_2$ for $J_{\mathrm{mix}}$ and $K^\ast_{\mathrm{LQR}}=(1-\sqrt{2})I_2$ for $J_\mathrm{LQR}$. 
    Fig.~\ref{fig:mix-cost-not-coercive} depicts $J_{\mathrm{mix}}$ for $K=K^\ast + \left[\begin{smallmatrix}
        0 & k_{12} \\ k_{21} & 0
    \end{smallmatrix}\right]$, while Fig.~\ref{fig:lqr-cost-coercive} plots $J_\mathrm{LQR}$ for $K=K^\ast_{\mathrm{LQR}} + \left[\begin{smallmatrix}         0 & k_{12} \\ k_{21} & 0
    \end{smallmatrix}\right]$.
    As shown in Fig.~\ref{fig:mix-cost-not-coercive}, the cost $J_{\mathrm{mix}}$ stays bounded as $K$ with $\|K\|<\infty$ approaches the boundary $\partial\mathcal{K}_\beta$, unlike $J_\mathrm{LQR}$ which diverges in Fig.~\ref{fig:lqr-cost-coercive}.
    \hfill $\qed$
\end{example} 

Theorem~\ref{theorem:analytical-functions} ensures that $J_{\mathrm{mix}}$ is infinitely differentiable. In particular, we have the following gradient formulas. 

\begin{lemma} \label{theorem:policy-gradient-mixed}
    Suppose Assumption~\ref{assumption:1} holds.
    For any $K \in \mathcal{K}_\beta$, the policy gradient of $J_{\mathrm{mix}}$ is given by
    \begin{subequations} \label{eq:gradient-J}
        \begin{equation}
            \nabla J_{\mathrm{mix}}(K)\!=\!2(R_2 K\!+\!B^\tr \Gamma_K+\beta^{-2}R_\infty K X_K \Gamma_K)X_K, 
        \end{equation}
        where ${X}_K$ is the stabilizing solution to the Riccati equation \eqref{eq:mix-2ch-Riccati-XK}, and $\Gamma_K$ is the unique solution to the Lyapunov equation 
        \begin{equation} \label{eq:mix-2ch-grad-Lyapunov-eq}
            \tilde{A}_K^\tr \Gamma_K+\Gamma_K\tilde{A}_K + Q_2+K^\tr R_2K=0
        \end{equation}
        with $\tilde{A}_K:= A_K+\beta^{-2}X_KS_K$ being stable. 
    \end{subequations}
\end{lemma}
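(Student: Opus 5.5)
Since Theorem~\ref{theorem:analytical-functions} guarantees that $K\mapsto X_K$ is real analytic on $\mathcal{K}_\beta$, we can compute the gradient by differentiating in an arbitrary direction. Fix $K\in\mathcal{K}_\beta$ and $\Delta\in\mathbb{R}^{m\times n}$, and let $X'$ denote the directional derivative of $X_K$ along $\Delta$.

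\textbf{Step 1: the Riccati equation for $X'$.} Differentiate \eqref{eq:mix-2ch-Riccati-XK}. We use $A_K'=B\Delta$ and $S_K'=\Delta^\tr R_\infty K+K^\tr R_\infty\Delta$, and group the terms containing $X'$:
\begin{equation*}
\tilde{A}_K X' + X'\tilde{A}_K^\tr + M_\Delta = 0,
\end{equation*}
where
\begin{equation*}
M_\Delta := B\Delta X_K + X_K\Delta^\tr B^\tr + \beta^{-2}X_K(\Delta^\tr R_\infty K + K^\tr R_\infty \Delta)X_K .
\end{equation*}
The grouping works because $A_KX'+\beta^{-2}X'S_KX_K=(A_K+\beta^{-2}X'^{\,0}\!\cdot\!0)$ is avoided: one pairs $A_KX'$ with $\beta^{-2}X_KS_KX'$ to form $\tilde{A}_KX'$, using the symmetry of $X_K$ and $S_K$, and the transposed pair forms $X'\tilde{A}_K^\tr$. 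Since $\tilde{A}_K$ is Hurwitz by Lemma~\ref{lem:BRL-X}, this Lyapunov equation determines $X'$ uniquely.

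\textbf{Step 2: the cost derivative.} The directional derivative of the cost is
\begin{equation*}
J'=\Tr\big((\Delta^\tr R_2K+K^\tr R_2\Delta)X_K\big)+\Tr\big((Q_2+K^\tr R_2K)X'\big).
\end{equation*}

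\textbf{Step 3: the adjoint trick.} Define $\Gamma_K$ by \eqref{eq:mix-2ch-grad-Lyapunov-eq}. It is well defined, unique, and positive semidefinite because $\tilde{A}_K$ is Hurwitz. Substitute $Q_2+K^\tr R_2K=-(\tilde{A}_K^\tr\Gamma_K+\Gamma_K\tilde{A}_K)$ and use cyclicity of the trace together with the equation for $X'$:
\begin{equation*}
\Tr\big((Q_2+K^\tr R_2K)X'\big)=-\Tr\big(\Gamma_K(\tilde{A}_KX'+X'\tilde{A}_K^\tr)\big)=\Tr(\Gamma_K M_\Delta).
\end{equation*}

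\textbf{Step 4: reading off the gradient.} Expand $\Tr(\Gamma_K M_\Delta)$ term by term. By symmetry of $\Gamma_K$ and $X_K$,
\begin{equation*}
\Tr(\Gamma_K B\Delta X_K)+\Tr(\Gamma_K X_K\Delta^\tr B^\tr)=2\Tr(\Delta^\tr B^\tr\Gamma_K X_K).
\end{equation*}
Similarly, the $\beta^{-2}$ term contributes $2\beta^{-2}\Tr(\Delta^\tr R_\infty K X_K\Gamma_K X_K)$, and the first term in Step 2 equals $2\Tr(\Delta^\tr R_2KX_K)$. Hence
\begin{equation*}
J'=\langle \Delta,\; 2(R_2K+B^\tr\Gamma_K+\beta^{-2}R_\infty KX_K\Gamma_K)X_K\rangle,
\end{equation*}
which is the claimed gradient.

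\textbf{Main obstacle.} The computation is routine. The only delicate step is Step 1: the terms containing $X'$ must be grouped correctly into $\tilde{A}_K X' + X'\tilde{A}_K^\tr$, which relies on the symmetry of $X_K S_K$ products. Justifying differentiation in the first place is covered by the analyticity from Theorem~\ref{theorem:analytical-functions}, equivalently by the implicit function theorem, since the Lyapunov operator $Z\mapsto\tilde{A}_KZ+Z\tilde{A}_K^\tr$ is invertible.
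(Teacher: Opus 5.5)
Your proposal is correct and takes essentially the same route as the paper. Both linearize the Riccati equation \eqref{eq:mix-2ch-Riccati-XK} into a Lyapunov equation for the directional derivative $X'$ governed by the Hurwitz matrix $\tilde{A}_K$, then use $\Gamma_K$ from \eqref{eq:mix-2ch-grad-Lyapunov-eq} to rewrite $\Tr((Q_2+K^\tr R_2K)X')$ as $\Tr(\Gamma_K M_\Delta)$; the paper does this last step through the integral formula for $X'$, you through an equivalent trace identity. The one thing to fix is the garbled sentence in Step 1, which should simply state $A_KX'+\beta^{-2}X_KS_KX'=\tilde{A}_KX'$ and $X'A_K^\tr+\beta^{-2}X'S_KX_K=X'\tilde{A}_K^\tr$, the latter using the symmetry of $X_K$ and $S_K$.
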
 

Lemma~\ref{theorem:policy-gradient-mixed} applies to the general two-channel case.
In the single-channel case where $z_2=z_\infty$, an alternative gradient expression can be derived using \eqref{eq:mix-1ch-cost-2}. 

\begin{lemma} \label{theorem:policy-gradient-mixed-signle-channel}
    Suppose Assumption~\ref{assumption:1} holds. If $z_2=z_\infty$ in \eqref{eq:transfer-function-Tz2w} and \eqref{eq:transfer-function-Tziw}, the policy gradient of $J_{\mathrm{mix}}$ can be evaluated as
    \begin{subequations} \label{eq:gradient-Jtilde}
        \begin{equation}
           \nabla J_{\mathrm{mix}}(K)=2(RK+B^\tr {P}_K)\Lambda_K, \quad \forall K \in \mathcal{K}_{\beta}
        \end{equation}
        where ${P}_K$ is the stabilizing solution to the Riccati equation \eqref{eq:mix-1ch-Riccati-PK}, and $\Lambda_K$ is the solution to the Lyapunov equation
        \begin{align} \label{eq:mix-1ch-grad-Lyapunov-eq}
            \hat{A}_{K}\Lambda_K+ \Lambda_K \hat{A}_{K}^\tr + W=0
        \end{align}
        with $\hat{A}_{K}:=A_K+\beta^{-2}W{P}_K$ being stable. 
    \end{subequations}
\end{lemma}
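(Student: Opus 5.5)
The plan is to differentiate the Riccati equation \eqref{eq:mix-1ch-Riccati-PK} implicitly in a direction $\Delta\in\mathbb{R}^{m\times n}$, and then convert the resulting directional derivative of $\Tr(P_K W)$ into a trace against $\Delta$ by means of a Lyapunov adjoint. Theorem~\ref{theorem:analytical-functions} guarantees that $K\mapsto P_K$ is differentiable on $\mathcal{K}_\beta$. For this we use the single-channel equivalence \eqref{eq:mix-1ch-cost-2}, which gives $J_{\mathrm{mix}}(K)=\Tr(P_KW)$ on $\mathcal{K}_\beta$. The analytic dependence of $P_K$ on $K$ follows from the same implicit-function argument as in the proof of Theorem~\ref{theorem:analytical-functions}. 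Alternatively, one can note that $P_K$ is the stabilizing solution of a Riccati equation whose linearization is invertible, because $\hat{A}_K$ is Hurwitz.

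First, fix $K\in\mathcal{K}_\beta$ and let $P'$ denote the directional derivative of $P_K$ along $\Delta$. Differentiating \eqref{eq:mix-1ch-Riccati-PK}, the terms involving $P'$ are $A_K^\tr P' + P'A_K + \beta^{-2}(P'WP_K + P_KWP')$. These combine into $\hat{A}_K^\tr P' + P'\hat{A}_K$, with $\hat{A}_K = A_K+\beta^{-2}WP_K$. The remaining terms come from differentiating $A_K$ and $K^\tr RK$, namely $\Delta^\tr B^\tr P_K + P_KB\Delta + \Delta^\tr RK + K^\tr R\Delta$. Hence
\begin{equation*}
\hat{A}_K^\tr P' + P'\hat{A}_K + M_\Delta + M_\Delta^\tr = 0, \qquad M_\Delta := (RK+B^\tr P_K)^\tr \Delta .
\end{equation*}
Since $\hat{A}_K$ is Hurwitz, this Lyapunov equation determines $P'$ uniquely. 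This uniqueness is also exactly the invertibility condition needed for the implicit function theorem.

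Second, compute $\Tr(P'W)$ using the adjoint $\Lambda_K$ from \eqref{eq:mix-1ch-grad-Lyapunov-eq}. Substituting $W=-(\hat{A}_K\Lambda_K+\Lambda_K\hat{A}_K^\tr)$ and using cyclicity of the trace gives
\begin{equation*}
\Tr(P'W) = -\Tr\big((\hat{A}_K^\tr P' + P'\hat{A}_K)\Lambda_K\big) = \Tr\big((M_\Delta+M_\Delta^\tr)\Lambda_K\big) = 2\,\Tr\big(\Lambda_K (RK+B^\tr P_K)^\tr\Delta\big),
\end{equation*}
where the last equality uses the symmetry of $\Lambda_K$. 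The right-hand side equals $\langle 2(RK+B^\tr P_K)\Lambda_K,\Delta\rangle$ in the Frobenius inner product, and identifying the gradient yields \eqref{eq:gradient-Jtilde}.

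The only step that needs care is the justification that $J_{\mathrm{mix}}(K)=\Tr(P_KW)$ holds identically on an open neighborhood, which is required so that differentiating this representation is legitimate. This identity is supplied by \eqref{eq:mix-1ch-cost-equivalence} and Appendix~\ref{proof:alternative-cost-single-channel}. Stability of $\hat{A}_K$ is part of the definition of the stabilizing solution and is guaranteed by Lemma~\ref{lem:BRL-X}, applied to the dual system. Everything else in the argument is routine calculus.
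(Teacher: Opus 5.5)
Your proposal is correct and follows essentially the paper's route. You linearize the Riccati equation along $\Delta$ to get a Lyapunov equation in $\hat{A}_K$ for the derivative; this is the paper's Lemma~\ref{lem:proof-mix-grad-2ch-prep} with $M=A_K^\tr$, $H=\beta^{-2}W$, $G=Q+K^\tr RK$. You then use the adjoint solution $\Lambda_K$ to rewrite $\Tr(P'W)$ as $\langle 2(RK+B^\tr P_K)\Lambda_K,\Delta\rangle$; your direct trace-duality step is equivalent to the paper's integral representation of the derivative.
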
 

The formula in \eqref{eq:gradient-J} also applies to the single-channel case by setting $Q_2\!=\!Q_\infty$ and $R_2\!=\!R_\infty$.
While mathematically equivalent, the alternative expression in \eqref{eq:gradient-Jtilde} is more useful for analysis.
Our proofs of Lemmas~\ref{theorem:policy-gradient-mixed} and \ref{theorem:policy-gradient-mixed-signle-channel} build on the strategy of \cite[Appendix B.4]{tang2021analysis}, adapting sensitivity analysis of Lyapunov equations to the Riccati settings. Details are provided in Appendices~\ref{app:proof-mix-grad-2ch}.

\begin{remark}[Comparison with LQR]
Lemmas~\ref{theorem:policy-gradient-mixed} and \ref{theorem:policy-gradient-mixed-signle-channel} show that computing $\nabla J_{\mathrm{mix}}$ requires solving one Riccati and one Lyapunov equation, slightly more involved than the LQR case, which only needs two Lyapunov equations \cite[Section IV]{levine1970determination}.
Note that as $\beta \!\to\! \infty$, $J_{\mathrm{mix}}$ in \eqref{eq:mix-cost-2ch} and \eqref{eq:mix-1ch-cost-2} reduces to $J_{\mathrm{LQR}}$, and both gradient formulas simplify to $\nabla J_{\mathrm{LQR}}$.
In particular, we have $\nabla J_{\mathrm{LQR}}(K) \!=\! 2(R_2 K+B^\tr \Gamma_K)X_K$, where ${X}_K$ and $\Gamma_K$ are the solutions to the Lyapunov equations 
\begin{equation} \label{eq:mix-2ch-LQR}
    \begin{aligned}
        &A_KX_K + X_KA_K^\tr + W =0,\\
        &A_K^\tr \Gamma_K+\Gamma_KA_K + Q_2+K^\tr R_2K=0.
    \end{aligned}
\end{equation} 
This is expected as \eqref{eq:mix-2ch-problem} reduces to LQR as $\beta \to \infty$. \hfill $\qed$ 
\end{remark}

\section{No Spurious Stationary Points}\label{section:no-spurious-stationary}

In this section, we characterize the global optimality of the mixed $\mathcal{H}_2/\mathcal{H}_\infty$ design \eqref{eq:mix-2ch-problem}. 

\subsection{Any stationary point is globally optimal} 

Since $\mathcal{K}_\beta$ is open, any local minimizer of $J_{\mathrm{mix}}$ must lie in its interior and thus must be a stationary point. Despite nonconvexity, we establish a key result that every stationary point (when it exists) of \eqref{eq:mix-2ch-problem} is globally optimal. 

\begin{theorem}
\label{thm:mix-stationary-global-optimality}
    Suppose Assumptions~\ref{assumption:1} and \ref{assumption:positive-definiteness} hold. For any policy $K\in\mathcal{K}_{\beta}$, if $K$ is a stationary point, i.e., $\nabla J_{\mathrm{mix}}(K) = 0$, then $K$ is a global minimizer of \eqref{eq:mix-2ch-problem}. 
\end{theorem}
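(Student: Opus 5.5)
We plan to prove Theorem~\ref{thm:mix-stationary-global-optimality} by the \ECL{} approach: lift \eqref{eq:mix-2ch-problem} to a convex problem, and show that a stationary point yields a feasible point of its dual (equivalently, satisfies its KKT conditions).

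\textbf{Step 1: convex lifting.} Using the change of variables $Y=KX$ (as in the proof of Lemma~\ref{lemma:path-connectedness}), we introduce the convex problem
\[
\min_{X\succ0,\,Y,\,Z}\ \Tr(Q_2X)+\Tr(Z) \quad\text{s.t.}\quad \begin{bmatrix} Z & R_2^{1/2}Y\\ Y^\tr R_2^{1/2} & X\end{bmatrix}\succeq0,\ \ \mathbb{F}_{\mathrm{cvx}}(X,Y)\preceq0,
\]
where the second constraint is the \emph{non-strict} version of \eqref{eq:mix-feasible-set-connectivity}. We will then show two facts.
\begin{itemize}
\item For every $K\in\mathcal{K}_\beta$, the point $(X_K,\,KX_K,\,R_2^{1/2}KX_KK^\tr R_2^{1/2})$ is feasible and attains cost $J_{\mathrm{mix}}(K)$.
\item Conversely, any feasible $(X,Y,Z)$ with $X\succ0$ and $K=YX^{-1}$ satisfies $X\succeq X_K$, using minimality of the stabilizing Riccati solution and the comparison argument behind Lemma~\ref{lem:BRL-X}. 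Hence $K\in\mathrm{cl}(\mathcal{K}_\beta)$ by Theorem~\ref{prop:closure}, and its cost is at least $J_{\mathrm{mix}}(K)$, or at least $\tilde J_{\mathrm{mix}}(K)$ on the boundary.
\end{itemize}
Combining these with Lemma~\ref{thm:continuity-mix-cost}, the infimum of the convex program equals the infimum of \eqref{eq:mix-2ch-problem}.

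\textbf{Step 2: stationarity implies the convex KKT conditions.} Let $K$ be stationary, and set $\Gamma_K$ as in Lemma~\ref{theorem:policy-gradient-mixed}. Since $X_K\succ0$ (because $W\succ0$), stationarity gives
\[
R_2K+B^\tr\Gamma_K+\beta^{-2}R_\infty KX_K\Gamma_K=0 .
\]
We aim to build a dual certificate for the lifted problem at $(X_K,KX_K,\cdot)$. The multiplier for the Riccati LMI will be built from $\Gamma_K\succeq0$, padded in the Schur-complement blocks by terms like $\beta^{-2}Q_\infty^{1/2}X_K\Gamma_K$. The Lyapunov equation \eqref{eq:mix-2ch-grad-Lyapunov-eq} for $\Gamma_K$ should then supply the stationarity of the Lagrangian in $X$, while the displayed identity supplies stationarity in $Y$. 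Complementary slackness follows because the Riccati equation holds with equality at $X_K$.

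\textbf{Step 3: conclusion.} A feasible convex point satisfying KKT is globally optimal for the convex program. Since its value is $J_{\mathrm{mix}}(K)$ and the two problems share the same infimum, $K$ is a global minimizer of \eqref{eq:mix-2ch-problem}.

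An alternative route for Step 3 is a direct first-order argument. Convexity of the lifted problem gives, for any $K'\in\mathcal{K}_\beta$, the line segment $(X_t,Y_t)=(1-t)(X_K,KX_K)+t(X_{K'},K'X_{K'})$. The map $t\mapsto K_t=Y_tX_t^{-1}$ stays in $\mathcal{K}_\beta$ near $t=0$ and is differentiable. The lifted cost is convex along the segment and upper-bounds $J_{\mathrm{mix}}(K_t)$, with equality at $t=0$. Hence its derivative at $0$ is at least $\langle\nabla J_{\mathrm{mix}}(K),\dot K_0\rangle=0$, which yields $J_{\mathrm{mix}}(K')\ge J_{\mathrm{mix}}(K)$. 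This is the standard \ECL{} argument, and it only requires $X_K$ to be differentiable, which holds by Theorem~\ref{theorem:analytical-functions}.

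\textbf{Main obstacle.} The key inequality is that the lifted cost dominates $J_{\mathrm{mix}}(YX^{-1})$ on the non-strict feasible set, with equality at $(X_K,KX_K)$. This requires the comparison result that any $X\succ0$ satisfying the non-strict Riccati inequality for $K$ dominates the stabilizing solution $X_K$. I would prove this by subtracting the two equations and writing the difference as a Lyapunov inequality in $\tilde A_K$, which is Hurwitz. Care is needed for boundary points where $X\succeq$ the minimal solution, which is where Theorem~\ref{prop:closure} enters.
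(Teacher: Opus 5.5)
Your proposal is correct. The lifting is the paper's ($Y=KX$, the non-strict constraint $\mathbb{F}_{\mathrm{cvx}}\preceq0$, with your $Z$ playing the role of $\gamma$), but the way you extract global optimality is genuinely different.

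The paper verifies the full \ECL{} sandwich $\operatorname{epi}_\geq(J_{\mathrm{mix}})\subseteq\pi_{K,\gamma}(\mathcal{L}_{\mathrm{lft}})=\operatorname{cl}\operatorname{epi}_\geq(J_{\mathrm{mix}})$ and then invokes Lemma~\ref{theorem:ECL-Guarantees}. The right-hand identity is what brings in Theorem~\ref{prop:closure}, the non-strict bounded real lemma, minimal Riccati solutions on $\partial\mathcal{K}_\beta$, and Lemma~\ref{thm:continuity-mix-cost}.

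Your dual certificate avoids all of that, and your sketch does close. With $N_K:=[\,X_KQ_\infty^{1/2}\ \ X_KK^\tr R_\infty^{1/2}\,]$, take
\begin{gather*}
\Lambda_{\mathbb{F}}=\begin{bmatrix} I\\ \beta^{-2}N_K^\tr\end{bmatrix}\Gamma_K\begin{bmatrix} I & \beta^{-2}N_K\end{bmatrix},\\
\Lambda_{Z}=\begin{bmatrix} I\\ -K^\tr R_2^{1/2}\end{bmatrix}\begin{bmatrix} I & -R_2^{1/2}K\end{bmatrix}.
\end{gather*}
Both are positive semidefinite. Both annihilate the corresponding constraint matrices at the lift $(X_K,KX_K,R_2^{1/2}KX_KK^\tr R_2^{1/2})$; for $\mathbb{F}_{\mathrm{cvx}}$ this is because the Riccati equation holds with equality. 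The Lagrangian is affine in $(X,Y,Z)$, and its gradients vanish:
\begin{enumerate}
\item in $Z$, the gradient is $I-I=0$;
\item in $Y$, it is $2(R_2K+B^\tr\Gamma_K+\beta^{-2}R_\infty KX_K\Gamma_K)=0$;
\item in $X$ (symmetrized), it is $Q_2-K^\tr R_2K+A^\tr\Gamma_K+\Gamma_KA+\beta^{-2}(\Gamma_KX_KQ_\infty+Q_\infty X_K\Gamma_K)$, which vanishes by the Lyapunov equation for $\Gamma_K$ once you substitute the $Y$-identity and its transpose.
\end{enumerate}
So weak duality gives $\Tr(Q_2X)+\Tr(Z)\ge J_{\mathrm{mix}}(K)$ on the whole feasible set, with no constraint qualification needed.

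The second bullet of Step 1, and the equality of infima used in Step 3, are then unnecessary: the first bullet applied to any $K'\in\mathcal{K}_\beta$ already gives $J_{\mathrm{mix}}(K')\ge J_{\mathrm{mix}}(K)$. That is fortunate, because as written that bullet is ordered circularly. $X_K$ for $K=YX^{-1}$ is only defined once $K\in\mathrm{cl}(\mathcal{K}_\beta)$ is known, and the paper obtains that from the non-strict bounded real lemma first.

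Your alternative segment argument is the \ECL{} proof unrolled. It needs the comparison $X_t\succeq X_{K_t}$ only for $K_t\in\mathcal{K}_\beta$ near $K$, which your Lyapunov argument in $\tilde A_{K_t}$ handles, so no boundary analysis enters there either.

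The trade-off between the two routes:
\begin{enumerate}
\item The paper's heavier route gets global structure from one construction: non-degeneracy of every $K\in\mathcal{K}_\beta$ in a reusable framework, the equal-value convex reformulation (Theorem~\ref{thm:mix-convex-reformulation}), and attainment possibly on $\partial\mathcal{K}_\beta$ (Proposition~\ref{thm:solvability}).
\item Your route gives a short, self-contained proof. It also exhibits $\Gamma_K$ as the explicit SDP multiplier of the Riccati LMI, which ties stationarity directly to the classical Lagrange-multiplier conditions.
\end{enumerate}
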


Owing to the absence of spurious stationary points, the mixed $\mathcal{H}_2/\mathcal{H}_\infty$ design \eqref{eq:mix-2ch-problem} exhibits hidden convexity: \textit{every local minimum is globally optimal}. This parallels recent results on benign nonconvexity in state-feedback problems such as LQR \cite{fazel2018global,mohammadi2021convergence,watanabe2025revisiting} and $\mathcal{H}_\infty$ control \cite{guo2022global,zheng2024benign,tang2023global}. 

Theorem~\ref{thm:mix-stationary-global-optimality} is most closely related to \cite[Proposition A.5]{zhang2021policy}, which establishes the global optimality of all stationary points in the single-channel case. Its proof relies on a game-theoretic formulation, whose extension to the two-channel case \eqref{eq:mix-2ch-problem} remains unclear.
In contrast, our approach builds on convex reformulations via the recently developed \ECL{} framework \cite{zheng2024benign}, which provides greater transparency and directly deals with the general two-channel problem.
In this sense, \cite[Proposition A.5]{zhang2021policy} appears as a special case of Theorem~\ref{thm:mix-stationary-global-optimality}.
Our proof involves an explicit \ECL{} construction, which requires careful analysis of (non)strict Riccati inequalities.
We provide the details in Section~\ref{section:ECL}.

Building on Lemma~\ref{theorem:policy-gradient-mixed} and Theorem~\ref{thm:mix-stationary-global-optimality}, we further derive stationarity conditions that characterize global optima of problem \eqref{eq:mix-2ch-problem}. To simplify the expression, we assume $R_\infty=\alpha^2R_2$ with $\alpha\geq0$ as a design parameter \cite{bernstein1989lqg}.

\begin{corollary}
\label{thm:mixed-2ch-optimality-conditions}
    Suppose Assumptions~\ref{assumption:1} and \ref{assumption:positive-definiteness} hold, and assume $R_\infty=\alpha^2R_2$ for some $\alpha\geq0$. Then a policy $K\in\mathcal{K}_\beta$ is global optimal for \eqref{eq:mix-2ch-problem} if and only if there exist matrices $\Gamma\succeq0$ and $X\succ0$ satisfying the following conditions:
    \begin{subequations}
    \label{eq:mix-2ch-optimality-conditions}
    \begin{align}
        &K=-R_2^{-1}B^\tr \Gamma\left(I+\beta^{-2}\alpha^2 X\Gamma\right)^{-1}, \label{eq:mix-2ch-optimality-conditions-mixed-a}\\
        &A_KX + XA_K^\tr + \beta^{-2}XS_K X + W = 0, \label{eq:mix-2ch-optimality-conditions-mixed-b} \\
        &\tilde{A}_K^\tr \Gamma+\Gamma \tilde{A}_K + Q_2+K^\tr R_2K=0, \label{eq:mix-2ch-optimality-conditions-mixed-c}
    \end{align}
    \end{subequations}
    {where $\tilde{A}_K \!:=\! A_K+\beta^{-2}XS_K$ is further Hurwitz.}
\end{corollary}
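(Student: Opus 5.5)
By Theorem~\ref{thm:mix-stationary-global-optimality}, and because $\mathcal{K}_\beta$ is open, a policy $K\in\mathcal{K}_\beta$ is globally optimal for \eqref{eq:mix-2ch-problem} exactly when $\nabla J_{\mathrm{mix}}(K)=0$. Every global minimizer in the open set is a local minimizer, so its gradient vanishes; conversely, Theorem~\ref{thm:mix-stationary-global-optimality} says every stationary point is global. The corollary therefore reduces to rewriting the stationarity condition $\nabla J_{\mathrm{mix}}(K)=0$ from Lemma~\ref{theorem:policy-gradient-mixed} in the form \eqref{eq:mix-2ch-optimality-conditions}. I will take $X=X_K$ and $\Gamma=\Gamma_K$.

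\emph{Necessity.} Let $K\in\mathcal{K}_\beta$ be optimal, and let $X=X_K$ be the stabilizing solution of \eqref{eq:mix-2ch-Riccati-XK}. Then \eqref{eq:mix-2ch-optimality-conditions-mixed-b} holds and $\tilde A_K$ is Hurwitz. Because $W\succ0$ (Assumption~\ref{assumption:positive-definiteness}), we have $X\succ0$. To see this, note that $\tilde A_K X + X\tilde A_K^\tr = \beta^{-2}XS_KX - W$, and combine this with the Lyapunov identity $A_KX+XA_K^\tr=-W-\beta^{-2}XS_KX\preceq -W\prec0$ with $A_K$ Hurwitz; this gives $X\succeq\int e^{A_Kt}We^{A_K^\tr t}dt\succ0$.

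Let $\Gamma=\Gamma_K$ solve \eqref{eq:mix-2ch-grad-Lyapunov-eq}. This is \eqref{eq:mix-2ch-optimality-conditions-mixed-c}, and $\Gamma\succeq0$ because $\tilde A_K$ is Hurwitz and $Q_2+K^\tr R_2K\succeq0$. Setting the gradient to zero and using that $X$ is invertible, I cancel the right factor $X$ and substitute $R_\infty=\alpha^2R_2$ to get
\[
R_2K + B^\tr\Gamma + \beta^{-2}\alpha^2R_2KX\Gamma=0,\quad\text{i.e.}\quad R_2K(I+\beta^{-2}\alpha^2X\Gamma)=-B^\tr\Gamma .
\]
Next I must check that $I+\beta^{-2}\alpha^2X\Gamma$ is invertible. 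The product $X\Gamma$ of two PSD matrices, with $X\succ0$, is similar to $X^{1/2}\Gamma X^{1/2}\succeq0$. Its eigenvalues are therefore real and nonnegative, so $I+cX\Gamma$ is nonsingular for every $c\ge0$. Multiplying by $R_2^{-1}$ on the left and by the inverse on the right yields \eqref{eq:mix-2ch-optimality-conditions-mixed-a}.

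\emph{Sufficiency.} Suppose $K\in\mathcal{K}_\beta$ and matrices $\Gamma\succeq0$, $X\succ0$ satisfy (a)--(c), with $\tilde A_K$ Hurwitz. Then $X$ is a stabilizing solution of \eqref{eq:mix-2ch-Riccati-XK}, and by the uniqueness part of Lemma~\ref{lem:BRL-X} it equals $X_K$. Since $\tilde A_K$ is Hurwitz, the Lyapunov equation \eqref{eq:mix-2ch-grad-Lyapunov-eq} has a unique solution, so $\Gamma=\Gamma_K$. Reversing the algebra above shows $R_2K+B^\tr\Gamma_K+\beta^{-2}R_\infty KX_K\Gamma_K=0$, hence $\nabla J_{\mathrm{mix}}(K)=0$ by Lemma~\ref{theorem:policy-gradient-mixed}, and Theorem~\ref{thm:mix-stationary-global-optimality} gives global optimality.

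The only points needing care are the invertibility of $X$ and of $I+\beta^{-2}\alpha^2X\Gamma$, both handled above, and the identification of $(X,\Gamma)$ with $(X_K,\Gamma_K)$ through uniqueness of the stabilizing Riccati solution. Everything else is direct algebra on the gradient formula.
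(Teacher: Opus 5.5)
Your proof is correct and takes essentially the same route as the paper. Both arguments reduce global optimality to $\nabla J_{\mathrm{mix}}(K)=0$, using the openness of $\mathcal{K}_\beta$ and Theorem~\ref{thm:mix-stationary-global-optimality}, and then rewrite the gradient formula of Lemma~\ref{theorem:policy-gradient-mixed} with $X=X_K$ and $\Gamma=\Gamma_K$. You also write out details the paper only asserts: the Lyapunov bound giving $X\succ0$, the similarity argument for invertibility of $I+\beta^{-2}\alpha^2X\Gamma$, and the identification $(X,\Gamma)=(X_K,\Gamma_K)$ by uniqueness in the sufficiency direction.
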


\begin{proof}
    We prove the equivalence in two directions.

    $\Rightarrow$. Suppose $K\in\mathcal{K}_\beta$ is globally optimal for \eqref{eq:mix-2ch-problem}. Since $\mathcal{K}_\beta$ is open, we must have $\nabla J_{\mathrm{mix}}(K) = 0$. Using the gradient formula \eqref{eq:gradient-J} and setting $\nabla J_{\mathrm{mix}}(K) = 0$ yields
    \begin{equation} \label{eq:gradient-zero-2-channel}
        2(R_2 K+B^\tr \Gamma_K+\beta^{-2}R_\infty K X_K \Gamma_K)X_K = 0,
    \end{equation}
    where $X_K$ is the unique stabilizing solution to the Riccati equation \eqref{eq:mix-2ch-Riccati-XK} and $\Gamma_K$ solves the Lyapunov equation \eqref{eq:mix-2ch-grad-Lyapunov-eq}.
    Since $W \succ 0$, the bounded real lemma ensures that $X_K \succ 0$. In addition, \eqref{eq:mix-2ch-grad-Lyapunov-eq} implies that $\Gamma_K\succeq0$. Thus, the matrix $I+\beta^{-2}\alpha^2 X_K\Gamma_K$ is invertible.
    Let $\Gamma:=\Gamma_K$ and $X:=X_K$. Then using $R_\infty=\alpha^2R_2$, we can rearrange \eqref{eq:gradient-zero-2-channel} to obtain \eqref{eq:mix-2ch-optimality-conditions-mixed-a}. It is clear that \eqref{eq:mix-2ch-Riccati-XK} and \eqref{eq:mix-2ch-grad-Lyapunov-eq} corresponds to \eqref{eq:mix-2ch-optimality-conditions-mixed-b} and \eqref{eq:mix-2ch-optimality-conditions-mixed-c}, respectively. Thus, all three optimality conditions are satisfied.
    
    $\Leftarrow$.  
    Suppose that there exist $X\succ0$ and $\Gamma\succeq0$ satisfying \eqref{eq:mix-2ch-optimality-conditions-mixed-a} to \eqref{eq:mix-2ch-optimality-conditions-mixed-c}. From \eqref{eq:mix-2ch-optimality-conditions-mixed-b} and the stability of $\tilde{A}_K$, Lemma~\ref{lem:BRL-X} implies that $K\in\mathcal{K}_\beta$.
    Substituting the expression for $K$ in \eqref{eq:mix-2ch-optimality-conditions-mixed-a} into the gradient formula \eqref{eq:gradient-J} confirms that $\nabla J_{\mathrm{mix}}(K)=0$. We then conclude by Theorem~\ref{thm:mix-stationary-global-optimality} that $K$ is a global minimizer of \eqref{eq:mix-2ch-problem}.  
\end{proof}

Corollary~\ref{thm:mixed-2ch-optimality-conditions} provides necessary and sufficient conditions for a policy $K \in \mathcal{K}_\beta$ to be globally optimal. Similar conditions were derived in \cite[Theorem 4.1]{bernstein1989lqg} via Lagrange multipliers, but were only shown to be necessary.

\begin{figure*}[t]
    \centering
    \subfigure[Two-channel: $\beta=1$]{%
        \includegraphics[width=0.25\textwidth]{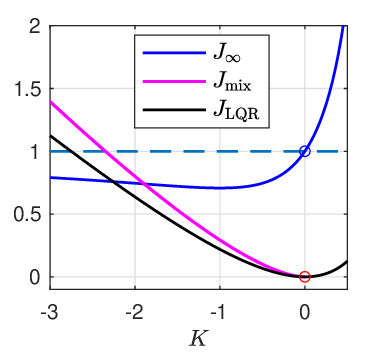}
        \label{fig:1dim-beta1}
    }\hspace{4mm}
    \subfigure[Two-channel: $\beta=2$]{%
        \includegraphics[width=0.25\textwidth]{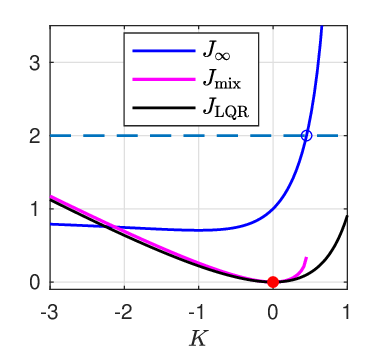}
        \label{fig:1dim-beta2}
    }\hspace{4mm}
    \subfigure[Single-channel: $\beta=1$]{%
        \includegraphics[width=0.25\textwidth]{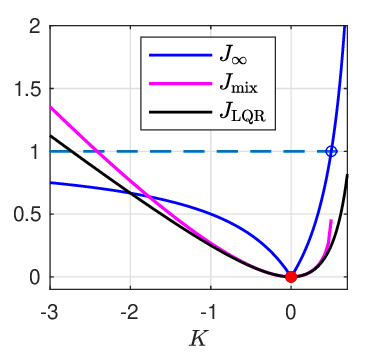}
        \label{fig:1dim-beta2-single}
    }
    \caption{Optimization landscapes of $J_{\mathrm{mix}}$, $J_\infty$, and $J_{\mathrm{LQR}}$ in Example~\ref{ex:1dim-two-channel}. (a) Infimum $J_1^\ast$ not attained (hollow red circle marks the boundary). (b) Minimum $J_2^\ast$ attained (solid red dot). (c) Single-channel case: minimum attained (solid red dot).}
    \label{fig:simple-example-mixed-1dim}
\end{figure*}

Clearly, Corollary~\ref{thm:mixed-2ch-optimality-conditions} also applies to the single-channel case. However, the alternative formulation in \eqref{eq:mix-1ch-cost-2} leads to a simpler gradient expression in \eqref{eq:gradient-Jtilde}, further offering additional insight into global optimality. 
To see this, with $W\succ 0$, the solution $\Lambda_K$ to the Lyapunov equation \eqref{eq:mix-1ch-grad-Lyapunov-eq} is positive definite, and therefore, the stationarity condition $\nabla{J_{\mathrm{mix}}(K)} = 0$ implies
$
K = -R^{-1}B^\tr {P}_K. 
$
Substituting this into the Riccati equation \eqref{eq:mix-1ch-Riccati-PK} results in a single Riccati equation, as summarized below. 

\begin{corollary}
\label{thm:mixed-1ch-optimality-conditions}
    Under Assumptions~\ref{assumption:1} and \ref{assumption:positive-definiteness}, if $z_2=z_\infty$ in \eqref{eq:transfer-function-Tz2w} and \eqref{eq:transfer-function-Tziw}, a policy $K\in\mathcal{K}_\beta$ is globally optimal for \eqref{eq:mix-2ch-problem} if and only if there exists a matrix {${P}\succeq0$} such that: 
    \begin{subequations}
    \label{eq:mix-1ch-optimality-conditions}
    \begin{align}
        &K=-R^{-1}B^\tr {P}, \label{eq:mix-1ch-optimality-conditions-a}\\
        &A^\tr {P}+{P}A+{P} (\beta^{-2}W-BR^{-1}B^\tr) {P}+Q=0, \label{eq:mix-1ch-optimality-conditions-b}
    \end{align}
    \end{subequations}
    where $A+(\beta^{-2}W-BR^{-1}B^\tr)P$ is further Hurwitz.
\end{corollary}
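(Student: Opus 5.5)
We argue both directions through Lemma~\ref{theorem:policy-gradient-mixed-signle-channel} and Theorem~\ref{thm:mix-stationary-global-optimality}, in the same way as the proof of Corollary~\ref{thm:mixed-2ch-optimality-conditions}, but using the single-channel representation \eqref{eq:mix-1ch-cost-2}.

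($\Rightarrow$) Let $K\in\mathcal{K}_\beta$ be globally optimal. Since $\mathcal{K}_\beta$ is open (Lemma~\ref{lemma:path-connectedness}), we have $\nabla J_{\mathrm{mix}}(K)=0$. By \eqref{eq:gradient-Jtilde}, this reads $2(RK+B^\tr P_K)\Lambda_K=0$. The matrix $\hat A_K=A_K+\beta^{-2}WP_K$ is Hurwitz and $W\succ0$, so the Lyapunov solution $\Lambda_K=\int_0^\infty e^{\hat A_K t}We^{\hat A_K^\tr t}dt$ is positive definite, hence invertible. Therefore $K=-R^{-1}B^\tr P_K$, which gives \eqref{eq:mix-1ch-optimality-conditions-a} with $P:=P_K$. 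Substituting this $K$ into \eqref{eq:mix-1ch-Riccati-PK}, the terms $A_K^\tr P+PA_K+K^\tr RK$ collapse to $A^\tr P+PA-PBR^{-1}B^\tr P$. This is a routine completion of squares: $PBK+K^\tr B^\tr P=-2PBR^{-1}B^\tr P$ and $K^\tr RK=PBR^{-1}B^\tr P$. The result is exactly \eqref{eq:mix-1ch-optimality-conditions-b}. Moreover, $\hat A_K=A-BR^{-1}B^\tr P+\beta^{-2}WP$ is Hurwitz, which is the stated stability condition. Finally, $P\succeq0$ follows because \eqref{eq:mix-1ch-Riccati-PK} can be read as the Lyapunov equation $\hat A_K^\tr P+P\hat A_K=-\beta^{-2}PWP-Q-K^\tr RK\preceq0$ with $\hat A_K$ Hurwitz.

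($\Leftarrow$) Suppose $P\succeq0$ satisfies \eqref{eq:mix-1ch-optimality-conditions-a}--\eqref{eq:mix-1ch-optimality-conditions-b} with $A+(\beta^{-2}W-BR^{-1}B^\tr)P$ Hurwitz, and set $K=-R^{-1}B^\tr P$. Reversing the algebra above shows that $P$ solves \eqref{eq:mix-1ch-Riccati-PK} with $A_K+\beta^{-2}WP$ Hurwitz. We first need $K\in\mathcal{K}_\beta$, which we obtain from the bounded real lemma in its dual (observability-type) form applied to $\mathbf T_\infty(K)$. The map $(A,B,C)\mapsto(A^\tr,C^\tr,B^\tr)$ preserves the $\mathcal H_\infty$ norm, so Lemma~\ref{lem:BRL-X} applied to the transposed system $B_w^\tr(sI-A_K^\tr)^{-1}[Q^{1/2},\,K^\tr R^{1/2}]$ yields exactly the Riccati equation \eqref{eq:mix-1ch-Riccati-PK} with stabilizing condition $A_K+\beta^{-2}WP$ Hurwitz. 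Statement 3 of the lemma then gives that $A_K$ is Hurwitz and $\|\mathbf T_\infty(K)\|_{\calH_\infty}<\beta$.

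The step that needs the most care is the implication from statement 3 back to the Hurwitz property of $A_K$, since we are using the lemma in the transposed form. As a direct check, write $A_K^\tr P+PA_K=-(\beta^{-2}PWP+Q+K^\tr RK)$. Because $Q\succ0$ (Assumption~\ref{assumption:positive-definiteness}), the right-hand side is negative definite. Together with $P\succeq0$, a standard Lyapunov argument on eigenvectors of $A_K$ shows that $A_K$ is Hurwitz: for $A_Kv=\lambda v$ we get $2\,\mathrm{Re}(\lambda)\,v^*Pv<0$, which forces $v^*Pv>0$ and then $\mathrm{Re}\,\lambda<0$. The strict bound on the $\calH_\infty$ norm then comes from the lemma.

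Since the stabilizing solution is unique, $P=P_K$. Then $\nabla J_{\mathrm{mix}}(K)=2(RK+B^\tr P_K)\Lambda_K=0$, and Theorem~\ref{thm:mix-stationary-global-optimality} shows that $K$ is a global minimizer.
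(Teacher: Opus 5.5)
Your route is the same as the paper's: vanishing gradient \eqref{eq:gradient-Jtilde} with $\Lambda_K\succ0$, substitution into \eqref{eq:mix-1ch-Riccati-PK}, and conversely the bounded real lemma, uniqueness of the stabilizing solution, and Theorem~\ref{thm:mix-stationary-global-optimality}. However, one step in your ($\Rightarrow$) direction is wrong as written: the justification of $P\succeq0$. With $\hat A_K=A_K+\beta^{-2}WP$ one has $\hat A_K^\tr P+P\hat A_K=A_K^\tr P+PA_K+2\beta^{-2}PWP$. Hence \eqref{eq:mix-1ch-Riccati-PK} reads
\[
\hat A_K^\tr P+P\hat A_K=\beta^{-2}PWP-Q-K^\tr RK,
\]
not $-\beta^{-2}PWP-Q-K^\tr RK$. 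This right-hand side is sign-indefinite, so a Lyapunov argument with the Hurwitz matrix $\hat A_K$ says nothing about the sign of $P$.

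The repair is immediate: use $A_K$ rather than $\hat A_K$. Since $K\in\mathcal{K}_\beta\subseteq\mathcal{K}$, $A_K$ is Hurwitz and $A_K^\tr P+PA_K=-(\beta^{-2}PWP+Q+K^\tr RK)\preceq0$. Therefore $P=\int_0^\infty e^{A_K^\tr t}(\beta^{-2}PWP+Q+K^\tr RK)e^{A_K t}\,dt\succeq0$. This is exactly the identity you use correctly in the ($\Leftarrow$) direction. Alternatively, $P_K\succeq0$ is part of statement~3 of Lemma~\ref{lem:BRL-X} applied to the transposed system, which is implicitly how the paper obtains it.

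With this fix the rest of your argument is correct. Your explicit dual-form application of the bounded real lemma, and the eigenvector check that $A_K$ is Hurwitz, spell out details the paper leaves implicit.
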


The proof is similar to that of Corollary~\ref{thm:mixed-2ch-optimality-conditions}. We give some details in Appendix~\ref{appendix:proof-optimality-signle-channel}.
Compared with the coupled conditions in \eqref{eq:mix-2ch-optimality-conditions}, the conditions in \eqref{eq:mix-1ch-optimality-conditions} are much simpler, requiring solving only a single Riccati equation independent of $K$.

\begin{remark}[Connection to LQR] \label{remark:LQR-case}
As $\beta \!\to\! \infty$, both optimality conditions in the single- and two-channel cases, \eqref{eq:mix-2ch-optimality-conditions} and \eqref{eq:mix-1ch-optimality-conditions}, reduce to the classical LQR Riccati equation
\begin{equation*} \label{eq:LQR-Riccati}
    A^\tr {P}+{P}A-{P}BR^{-1}B^\tr {P}+Q=0,
\end{equation*}
with the optimal policy $K=-R^{-1}B^\tr {P}$, where ${P}$ is the stabilizing solution. \hfill $\qed$ 
\end{remark}

\begin{remark}[Connection to $\mathcal{H}_\infty$ suboptimal control]\label{remark:connection-to-Hinf-control}
The Riccati equation \eqref{eq:mix-1ch-optimality-conditions-b} is closely tied to state-feedback $\mathcal{H}_\infty$ suboptimal control. 
By \cite[Theorem 17.6]{zhou1996robust} and \cite[Theorem 20.2.1]{lancaster1995algebraic}, it admits a unique stabilizing solution $P$ if and only if there exists a policy $K\!\in\!\mathcal{K}_\beta$. 
One suboptimal policy is given by $ K\!=\!-R^{-1}B^\tr {P}$. 
As shown in Corollary~\ref{thm:mixed-1ch-optimality-conditions}, this $K$ also solves problem \eqref{eq:mix-2ch-problem} with $z_2=z_\infty$.
This highlights the close connection of \eqref{eq:mix-2ch-problem} to several classical formulations, including maximum entropy $\mathcal{H_\infty}$ control \cite{mustafa1990minimum}, risk-sensitive control \cite{jacobson1973optimal}, and zero-sum dynamic games \cite{bacsar2008h,watanabe2025semidefinite}.
\hfill $\qed$  
\end{remark}

\subsection{Existence of stationary points}\label{subsubsec:existence-stationary}

Theorem~\ref{thm:mix-stationary-global-optimality} and  Corollaries~\ref{thm:mixed-2ch-optimality-conditions}-\ref{thm:mixed-1ch-optimality-conditions} do not ensure the existence of stationary points.
In particular, the optimality conditions \eqref{eq:mix-2ch-optimality-conditions} may be infeasible.
We summarize this fact below. 

\begin{fact} \label{fact:solvability-2ch-mix}
    Under Assumptions~\ref{assumption:1} and \ref{assumption:positive-definiteness}, the mixed $\mathcal{H}_2/\mathcal{H}_\infty$ control \eqref{eq:mix-2ch-problem} may admit no stationary points in $\mathcal{K}_\beta$, and its optimal value may not be attained within $\mathcal{K}_\beta$. 
\end{fact}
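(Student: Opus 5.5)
Since Fact~\ref{fact:solvability-2ch-mix} is an existence claim about possible failure, I will prove it with an explicit counterexample. I will take a scalar two-channel instance ($n=m=1$), where everything is computable in closed form. This is the instance suggested by Fig.~\ref{fig:1dim-beta1}, e.g.\ $A=a$, $B=b$, $B_w=1$, with $Q_2,R_2,Q_\infty,R_\infty$ chosen so that the $\mathcal{H}_2$ and $\mathcal{H}_\infty$ channels pull in different directions and $\beta$ is only slightly above $\beta^\ast$.

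For scalar $k$ with $a_k:=a+bk<0$, the $\mathcal{H}_\infty$ norm of $\mathbf{T}_\infty(k)$ equals its DC gain $\sqrt{q_\infty+r_\infty k^2}/|a_k|$. Hence $\mathcal{K}_\beta$ is an explicit open interval $(k_-,k_+)$ (or a half-line), and $\beta^\ast$ is computable. The Riccati equation \eqref{eq:mix-2ch-Riccati-XK} is the scalar quadratic
\[
\beta^{-2}s_k x^2+2a_k x+w=0, \qquad s_k:=q_\infty+r_\infty k^2 .
\]
Its stabilizing (minimal) root is
\[
x_k=\beta^2\bigl(-a_k-\sqrt{a_k^2-\beta^{-2}s_kw}\bigr)/s_k .
\]
Therefore $J_{\mathrm{mix}}(k)=(q_2+r_2k^2)x_k$ is an explicit function on $(k_-,k_+)$, and by Lemma~\ref{thm:continuity-mix-cost} it extends continuously to the endpoints, where the discriminant vanishes.

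The plan is as follows. First, I compute $\mathcal{K}_\beta$ and $\beta^\ast$ for concrete numbers, chosen so that the LQR-optimal gain lies outside $\mathcal{K}_\beta$: say, the $\mathcal{H}_2$ channel favors small $|k|$, while $\mathcal{H}_\infty$ feasibility requires large $|k|$. Second, I show that $J_{\mathrm{mix}}$ is strictly monotone on $(k_-,k_+)$. To do this I differentiate directly, or equivalently show that the scalar form of \eqref{eq:gradient-J}, namely $2(r_2k+b\gamma_k+\beta^{-2}r_\infty k x_k\gamma_k)x_k$, has constant sign. Here $\gamma_k=(q_2+r_2k^2)/(-2\tilde a_k)$ with $\tilde a_k=-\sqrt{a_k^2-\beta^{-2}s_kw}$.

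Monotonicity immediately gives both claims. There is no $k$ with $\nabla J_{\mathrm{mix}}(k)=0$, and the infimum equals $\tilde J_{\mathrm{mix}}(k_+)$ (or $\tilde J_{\mathrm{mix}}(k_-)$), which is approached only at the boundary and is therefore not attained in the open set $\mathcal{K}_\beta$. As a cross-check, I would also verify that the scalar optimality system \eqref{eq:mix-2ch-optimality-conditions} has no solution with $\tilde a_k<0$.

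The main obstacle is the sign verification in the second step. The square root makes the derivative messy, and near the endpoint $\tilde a_k\to0$ the factor $\gamma_k$ blows up. If an analytic sign argument is too cumbersome, I would choose parameters that make the discriminant a perfect square, or reduce the problem to checking a polynomial inequality on an interval, which can then be certified by hand. A simpler alternative is to tune the example so that $\beta$ is barely above $\beta^\ast$. Then $\mathcal{K}_\beta$ is a tiny interval around the $\mathcal{H}_\infty$-optimal gain, and the cost derivative is dominated by the $r_2k+b\gamma_k$ term, which has a fixed sign there because the LQR gain lies far away.
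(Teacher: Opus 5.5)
\textbf{There is a genuine gap.} Your logic (a derivative of constant strict sign on $\mathcal{K}_\beta$ gives no stationary point, and the infimum is then only approached at the boundary) is fine. The problem is that you never fix an instance, and the recipe you give for choosing one cannot work. The blow-up of $\gamma_k$ that you treat as a technical nuisance actually decides the sign, and it goes against you.

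Let $k_e$ be a finite endpoint of $\mathcal{K}_\beta$.
\begin{itemize}
\item There $D(k):=a_k^2-\beta^{-2}s_kw$ vanishes with $D'(k_e)\neq 0$. Otherwise $k_e$ would be a critical point of $k\mapsto\|\mathbf{T}_\infty(k)\|_{\mathcal{H}_\infty}$, hence its global minimizer, forcing $\beta=\beta^\ast$.
\item The factor multiplying $\gamma_k$ in your gradient, $b+\beta^{-2}r_\infty k x_k$, tends to $(bq_\infty-r_\infty a k_e)/s_{k_e}$. This is nonzero and has the sign of $\frac{d}{dk}\|\mathbf{T}_\infty(k)\|_{\mathcal{H}_\infty}$ at $k_e$: positive at $k_+$, negative at $k_-$.
\item Equivalently, $dx_k/dk=x_k(b+\beta^{-2}r_\infty k x_k)/(-\tilde a_k)\to\pm\infty$. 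The minimal root rises with a vertical tangent toward every finite endpoint.
\end{itemize}
Hence, whenever $q_2+r_2k_e^2>0$, you get $J_{\mathrm{mix}}'\to+\infty$ at $k_+$ and $J_{\mathrm{mix}}'\to-\infty$ at $k_-$. Together with $J_{\mathrm{mix}}\to\infty$ as $|k|\to\infty$, this forces an interior minimizer, i.e.\ a stationary point.

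Consequences for your plan:
\begin{itemize}
\item For such instances $J_{\mathrm{mix}}$ is never monotone on $\mathcal{K}_\beta$.
\item Your claim that the derivative is dominated by $r_2k+b\gamma_k$ is false. The term $\beta^{-2}r_\infty k x_k\gamma_k$ is of the same order as $b\gamma_k$, and their sum changes sign across $\mathcal{K}_\beta$.
\item The ``$\beta$ barely above $\beta^\ast$'' variant yields a short interval on which $J_{\mathrm{mix}}$ rises toward both ends, so it has an interior minimizer.
\item Placing the LQR gain outside (let alone far from) $\mathcal{K}_\beta$ does not help. Take $a=-1$, $b=w=q_\infty=r_\infty=r_2=1$, $q_2=0$, $\beta=0.9$. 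Then $\mathcal{K}_\beta\approx(-8.41,-0.119)$ and the LQR gain $0$ lies outside it, yet $J_{\mathrm{mix}}$ has an interior minimizer near $k\approx-0.12$.
\end{itemize}

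It follows that in the scalar setting the infimum can fail to be attained only if it is approached at an endpoint with $q_2+r_2k_e^2=0$. That means $q_2=0$ (which requires $a<0$ for detectability) and $\beta=\|\mathbf{T}_\infty(0)\|_{\mathcal{H}_\infty}=\sqrt{q_\infty w}/|a|$ exactly, so that $k=0$ sits on $\partial\mathcal{K}_\beta$.

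This is exactly the paper's Example~\ref{ex:1dim-two-channel}, Case 1 ($A=-1$, $B=B_w=1$, $Q_2=0$, $Q_\infty=R_2=R_\infty=1$, $\beta=1$). There $\mathcal{K}_1=(-\infty,0)$, $J_{\mathrm{mix}}(k)=k^2X_k>0$ on $\mathcal{K}_1$, and $J_{\mathrm{mix}}(k)\to 0$ as $k\to 0^-$. So non-attainment is immediate without any sign analysis. The absence of stationary points then follows from Theorem~\ref{thm:mix-stationary-global-optimality}, since any stationary point would be a global minimizer. Your monotonicity argument does go through for that instance: with $t=\sqrt{-2k}$ one gets $J_{\mathrm{mix}}=t^4(t^2-2t+2)/(2(t^4+4))$, which is strictly increasing in $t$. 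But you have to find that instance, and as written the crucial step is both unverified and aimed at instances where it is false.

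A minor point: strict monotonicity alone does not rule out a vanishing derivative (consider $k\mapsto k^3$). So either prove a strict sign of the derivative or invoke Theorem~\ref{thm:mix-stationary-global-optimality}.
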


In fact, the infeasibility of \eqref{eq:mix-2ch-optimality-conditions} stems from an overly stringent robustness requirement. Relaxing the constraint by increasing $\beta$ ensures the existence of a stationary point.

\begin{theorem} \label{prop:existence-2ch-large-beta}
Under Assumption~\ref{assumption:1}, there exists a threshold $\bar{\beta} > 0$ such that for all $\beta > \bar{\beta}$, problem \eqref{eq:mix-2ch-problem} admits a stationary point $K\in\mathcal{K}_\beta$. Equivalently, there exists $(K,X,\Gamma)$ satisfying \eqref{eq:mix-2ch-optimality-conditions} where $X$ is the stabilizing solution to \eqref{eq:mix-2ch-optimality-conditions-mixed-b}.
\end{theorem}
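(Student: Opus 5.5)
We prove the statement by treating $\varepsilon:=\beta^{-2}$ as a small parameter and solving the coupled optimality conditions \eqref{eq:mix-2ch-optimality-conditions} with the Implicit Function Theorem, starting from the LQR solution at $\varepsilon=0$. For this we work with the stationarity equation in gradient form, $R_2K+B^\tr\Gamma+\varepsilon R_\infty K X\Gamma=0$, rather than with the closed form \eqref{eq:mix-2ch-optimality-conditions-mixed-a}. This avoids the assumption $R_\infty=\alpha^2R_2$; since $X\succ0$, this equation is equivalent to $\nabla J_{\mathrm{mix}}(K)=0$ by Lemma~\ref{theorem:policy-gradient-mixed}.

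Define the map $F(K,X,\Gamma;\varepsilon)$ with three components:
\begin{align*}
F_1 &= R_2K+B^\tr\Gamma+\varepsilon R_\infty KX\Gamma,\\
F_2 &= A_KX+XA_K^\tr+\varepsilon XS_KX+W,\\
F_3 &= (A_K+\varepsilon XS_K)^\tr\Gamma+\Gamma(A_K+\varepsilon XS_K)+Q_2+K^\tr R_2K.
\end{align*}
$F$ is polynomial in all arguments, including $\varepsilon$ near $0$.

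At $\varepsilon=0$, Assumption~\ref{assumption:1} gives the LQR solution. Let $\Gamma_0$ be the stabilizing solution of the LQR Riccati equation and set $K_0=-R_2^{-1}B^\tr\Gamma_0$, so that $A_{K_0}$ is Hurwitz. Let $X_0$ solve $A_{K_0}X_0+X_0A_{K_0}^\tr+W=0$, which gives $X_0\succ0$. Then $F(K_0,X_0,\Gamma_0;0)=0$.

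The key step is invertibility of the partial derivative $D_{(K,X,\Gamma)}F$ at this point. At $\varepsilon=0$ the linearization is triangular:
\begin{align*}
dF_1 &= R_2\,dK+B^\tr d\Gamma,\\
dF_3 &= A_{K_0}^\tr d\Gamma+d\Gamma A_{K_0}+dK^\tr(B^\tr\Gamma_0+R_2K_0)+(\cdot)^\tr,
\end{align*}
and the last term vanishes by $F_1=0$. Hence $dF_3=0$ forces $d\Gamma=0$, because $A_{K_0}$ is Hurwitz and the Lyapunov operator is invertible. Then $dF_1=0$ forces $dK=0$, since $R_2\succ0$. Finally,
\[
dF_2=A_{K_0}\,dX+dX\,A_{K_0}^\tr+B\,dK\,X_0+X_0\,dK^\tr B^\tr,
\]
so with $dK=0$ we get $dX=0$ by the same Lyapunov invertibility. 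The derivative is therefore injective, and since it maps a space to one of equal dimension, it is an isomorphism.

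The Implicit Function Theorem then gives a continuous (indeed analytic) branch $(K(\varepsilon),X(\varepsilon),\Gamma(\varepsilon))$ for all $|\varepsilon|<\varepsilon_0$, satisfying $F=0$.

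It remains to verify the qualitative requirements for small $\varepsilon>0$, which is where care is needed. First, $\tilde A_{K(\varepsilon)}=A_{K(\varepsilon)}+\varepsilon X(\varepsilon)S_{K(\varepsilon)}$ is Hurwitz: it is close to $A_{K_0}$ and eigenvalues depend continuously on $\varepsilon$. This makes $X(\varepsilon)$ a stabilizing solution of \eqref{eq:mix-2ch-optimality-conditions-mixed-b}. Second, $X(\varepsilon)\succ0$ by continuity from $X_0\succ0$; it also follows because $X(\varepsilon)$ solves a Lyapunov equation in the Hurwitz $A_{K(\varepsilon)}$ with right-hand side $-(\varepsilon XSX+W)\prec0$. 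Third, $\Gamma(\varepsilon)\succeq0$ because it solves the Lyapunov equation \eqref{eq:mix-2ch-optimality-conditions-mixed-c} with Hurwitz $\tilde A_K$ and PSD forcing term.

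With these in hand, Lemma~\ref{lem:BRL-X}(3) gives $K(\varepsilon)\in\mathcal{K}_\beta$ for $\beta=\varepsilon^{-1/2}$. Uniqueness of the stabilizing solution identifies $X(\varepsilon)=X_{K(\varepsilon)}$ and $\Gamma(\varepsilon)=\Gamma_{K(\varepsilon)}$, so $\nabla J_{\mathrm{mix}}(K(\varepsilon))=F_1X=0$ by Lemma~\ref{theorem:policy-gradient-mixed}. Setting $\bar\beta:=\varepsilon_0^{-1/2}$ completes the argument.

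The main obstacle is the nondegeneracy of the Jacobian. The triangular structure at $\varepsilon=0$ handles it, and it relies on the cancellation supplied by $F_1=0$.
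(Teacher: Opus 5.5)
Your proposal is correct and follows essentially the same route as the paper. Both arguments apply the implicit function theorem to perturb the LQR solution at $\beta^{-1}=0$, use the gradient-form stationarity equation (so $R_\infty=\alpha^2R_2$ is not needed), and get Jacobian invertibility from the same block-triangular structure with the same cancellation $B^\tr\Gamma_0+R_2K_0=0$; your injectivity argument is just the paper's Schur-complement step, and parametrizing by $\beta^{-2}$ instead of $\beta^{-1}$ is immaterial. Your checks that $\tilde A_{K(\varepsilon)}$ is Hurwitz, that $X(\varepsilon)\succ0$ and $\Gamma(\varepsilon)\succeq0$, and that $(X,\Gamma)=(X_K,\Gamma_K)$ by uniqueness spell out side conditions the paper leaves implicit, though $X_0\succ0$ uses $W\succ0$ from Assumption~\ref{assumption:positive-definiteness}, which the stationarity claim itself does not need since $\nabla J_{\mathrm{mix}}(K)=2F_1X_K$.
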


Note that this existence result does not rely on the assumption $R_\infty=\alpha^2 R_2$. The proof uses a perturbation argument around the LQR case. 
The key insight is that as $\beta \to \infty$, the optimality conditions \eqref{eq:mix-2ch-optimality-conditions} converge to those of LQR (Remark~\ref{remark:LQR-case}), where a stationary point always exists. The details are technically involved, and we present them in Appendix~\ref{appendix:proof-2ch-existence-stationary}.
    
In contrast, the single-channel case always admits a stationary point attaining the optimal value, thanks to its connection to $\mathcal{H}_\infty$ suboptimal control (Remark~\ref{remark:connection-to-Hinf-control}). 

\begin{theorem} \label{theorem:unique-stationary-point}
    Under Assumptions~\ref{assumption:1} and \ref{assumption:positive-definiteness}, the single-channel problem \eqref{eq:mix-2ch-problem} with $z_2 \!=\! z_\infty$ admits a unique stationary point $K\in\mathcal{K}_\beta$, given by $K=-R^{-1}B^\tr {P}$, where $P$ is the stabilizing solution to the Riccati equation \eqref{eq:mix-1ch-optimality-conditions-b}.
\end{theorem}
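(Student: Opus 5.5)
The plan has four steps. First, existence of the stabilizing Riccati solution. Second, feasibility of the candidate policy. Third, stationarity of the candidate. Fourth, uniqueness of the stationary point.

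\textbf{Existence and sign of $P$.} By Assumption~\ref{assumption:1}, $\mathcal{K}_\beta$ is nonempty (Lemma~\ref{lemma:path-connectedness}). Remark~\ref{remark:connection-to-Hinf-control} (via \cite[Theorem 17.6]{zhou1996robust}) then gives a unique stabilizing solution $P$ of \eqref{eq:mix-1ch-optimality-conditions-b}, with $A+(\beta^{-2}W-BR^{-1}B^\tr)P$ Hurwitz. I would also verify $P\succeq 0$. Rewrite \eqref{eq:mix-1ch-optimality-conditions-b} with $K=-R^{-1}B^\tr P$ as
\[
A_K^\tr P+PA_K+\beta^{-2}PWP+Q+K^\tr RK=0 .
\]
Equivalently, $\hat A^\tr P+P\hat A=-(\beta^{-2}PWP+Q+K^\tr RK)\preceq 0$ with $\hat A:=A_K+\beta^{-2}WP$ Hurwitz. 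A Lyapunov argument then gives $P\succeq0$.

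\textbf{Feasibility, $K\in\mathcal{K}_\beta$.} The displayed equation is exactly \eqref{eq:mix-1ch-Riccati-PK}, and $P$ is its stabilizing solution. I would use the dual form of Lemma~\ref{lem:BRL-X}, applied to the transposed system $(A_K^\tr, [Q^{1/2};R^{1/2}K]^\tr, B_w^\tr)$, which has the same $\calH_\infty$ norm. This gives $\|\mathbf{T}_\infty(K)\|_{\calH_\infty}<\beta$ with $A_K$ Hurwitz. The one point requiring care is that $A_K$ itself is Hurwitz. I would get this from $P\succeq0$ and the Lyapunov-type identity $A_K^\tr P+PA_K\preceq -(Q+K^\tr RK)$, using $Q\succ0$ from Assumption~\ref{assumption:positive-definiteness} (here $Q=Q_\infty$). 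Alternatively, the bounded real lemma equivalence already packages this. Hence $K\in\mathcal{K}_\beta$ and $P_K=P$.

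\textbf{Stationarity.} By Lemma~\ref{theorem:policy-gradient-mixed-signle-channel}, $\nabla J_{\mathrm{mix}}(K)=2(RK+B^\tr P_K)\Lambda_K$, and this vanishes since $RK=-B^\tr P$. Theorem~\ref{thm:mix-stationary-global-optimality} then shows that $K$ is a global minimizer, so the optimum is attained.

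\textbf{Uniqueness.} Let $K'\in\mathcal{K}_\beta$ be any stationary point. Since $W\succ0$ and $\hat A_{K'}$ is Hurwitz, \eqref{eq:mix-1ch-grad-Lyapunov-eq} gives $\Lambda_{K'}\succ0$, so $K'=-R^{-1}B^\tr P_{K'}$. Substituting into \eqref{eq:mix-1ch-Riccati-PK} shows that $P_{K'}$ solves \eqref{eq:mix-1ch-optimality-conditions-b}, and the closed-loop matrix $A_{K'}+\beta^{-2}WP_{K'}=A+(\beta^{-2}W-BR^{-1}B^\tr)P_{K'}$ is Hurwitz. So $P_{K'}$ is a stabilizing solution of \eqref{eq:mix-1ch-optimality-conditions-b}. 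Stabilizing solutions of an algebraic Riccati equation are unique, so $P_{K'}=P$ and $K'=K$.

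The main obstacle I expect is the feasibility step: rigorously matching the dual/transposed bounded real lemma to the cost-side Riccati equation \eqref{eq:mix-1ch-Riccati-PK}, including closed-loop stability of $A_K$. The rest follows from the stated lemmas.
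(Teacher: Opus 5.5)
Your architecture matches the paper's. The paper takes the stabilizing solution from the standard state-feedback $\mathcal{H}_\infty$ results and then applies Corollary~\ref{thm:mixed-1ch-optimality-conditions}. The two directions of that corollary are exactly your feasibility/stationarity and uniqueness steps, and those steps are fine.

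The gap is your self-contained proof that $P\succeq 0$, on which your feasibility step rests. The identity you write is incorrect. With $\hat A=A_K+\beta^{-2}WP$ and \eqref{eq:mix-1ch-Riccati-PK},
\[
\hat A^\tr P+P\hat A=\beta^{-2}PWP-(Q+K^\tr RK),
\]
which is indefinite, so no Lyapunov conclusion follows. The sign-definite identity is $A_K^\tr P+PA_K=-(\beta^{-2}PWP+Q+K^\tr RK)$. It only yields $P\succeq0$ once $A_K$ is known to be Hurwitz, and you derive $A_K$ Hurwitz \emph{from} $P\succeq0$. So the feasibility step is circular as written. Your fallback, ``the bounded real lemma already packages this,'' does not help: item 3 of Lemma~\ref{lem:BRL-X} requires the stabilizing solution to be positive semidefinite.

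This cannot be fixed by a better manipulation of the Riccati equation, because $P\succeq0$ genuinely depends on $\beta>\beta^\ast$. Take the scalar data $A=B=B_w=Q=R=1$ and $\beta^{-2}=3/2$.
\begin{itemize}
\item Here $(A,B)$ is stabilizable, $(Q^{1/2},A)$ is detectable, and Assumption~\ref{assumption:positive-definiteness} holds, but $\beta<\beta^\ast=1$.
\item Equation \eqref{eq:mix-1ch-optimality-conditions-b} becomes $\tfrac12P^2+2P+1=0$. Its stabilizing root is $P=-2-\sqrt2<0$, with closed loop $1+\tfrac12P=-\tfrac{\sqrt2}{2}$.
\item The policy $K=-R^{-1}B^\tr P=2+\sqrt2$ gives $A_K=3+\sqrt2>0$, which is not Hurwitz.
\end{itemize}
Any argument using only the Riccati equation, the stabilizing property, and definiteness of $Q,R,W$ would wrongly conclude $P\succeq0$ here.

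The missing ingredient is the full statement of the state-feedback $\mathcal{H}_\infty$ theorem you already cite: $\mathcal{K}_\beta\neq\emptyset$ if and only if \eqref{eq:mix-1ch-optimality-conditions-b} has a stabilizing solution \emph{and} that solution is positive semidefinite. Take $P\succeq0$ from there. The paper does the same implicitly, since the sufficiency direction of Corollary~\ref{thm:mixed-1ch-optimality-conditions} assumes $P\succeq0$. With $P\succeq0$ in hand, the rest of your argument goes through: the dual bounded-real-lemma step, the Hurwitz argument for $A_K$ via $Q\succ0$, stationarity, and uniqueness.
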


\begin{proof} 
    By \cite[Theorem 17.6]{zhou1996robust} and \cite[Theorem 20.2.1]{lancaster1995algebraic}, \eqref{eq:mix-1ch-optimality-conditions-b} admits a stabilizing solution if and only if $\mathcal{K}_\beta$ is nonempty, satisfied by Assumption~\ref{assumption:1}.
    Then, under Assumption~\ref{assumption:positive-definiteness}, the stationary point is given by \eqref{eq:mix-1ch-optimality-conditions-a}, and its uniqueness follows from that of the stabilizing solution.
\end{proof}

We provide a simple example illustrating Theorems~\ref{prop:existence-2ch-large-beta} and~\ref{theorem:unique-stationary-point}, showing that the existence of stationary points depends on $\beta$.  

\begin{example} \label{ex:1dim-two-channel}
    Consider a scalar instance of \eqref{eq:mix-2ch-problem} with $A=-1$, $B=B_w=1$, $Q_2=0$, $Q_\infty=1$, $R_2=R_\infty=1$.
    The stabilizing set is $\mathcal{K}\!=\!\{k\in\mathbb{R}: k\!<\!1\}$.
    The $\calH_\infty$ norm can be computed in closed form as
    $J_\infty(k):=\|\mathbf{T}_{\infty}(k)\|_{\mathcal{H}_\infty}=\sqrt{1+k^2}/(1-k)$ for all $k<1$, which yields the feasible set 
    \[
    \mathcal{K}_\beta \!=\! \{k: 1+k^2\!<\!\beta^2(1-k)^2,\ k\!<\!1\}.
    \]
    With $J_{\mathrm{mix}}(k) \!=\! k^2 X_k$ and $X_k$ the stabilizing solution to $\beta^{-2}(k^2+1)X_k^2+2(k-1)X_k+1=0$, problem \eqref{eq:mix-2ch-problem} reads
    \begin{align*} \label{eq:mixed-ex-1dim-beta2}
        J_\beta^\ast \!:=\! \inf_{k\in\mathcal{K}_\beta} k^2 X_k,\ \ \scalebox{1}{$
    X_k=\frac{1-k-\sqrt{(k-1)^2-\beta^{-2}(k^2+1)}}{\beta^{-2}(k^2+1)}
    $.}
    \end{align*}
    
    \textbf{Case 1: $\beta=1$.} 
    We have $\mathcal{K}_1=\{k:k<0\}$.
    The infimum $J_1^\ast=0$ is not attained in $\mathcal{K}_1$ but approached as $k \to 0^-$ and $X_k \to 1^+$ (see Figure~\ref{fig:1dim-beta1}).
    
    \textbf{Case 2: $\beta=2$.} 
    We have $\mathcal{K}_2\!=\!\{k:k\!<\!(4\!-\!\sqrt{7})/3\}$. The infimum $J_2^\ast=0$ is attained by the optimal policy $k^\ast=0$ (see Figure~\ref{fig:1dim-beta2}). One can verify by Lemma~\ref{theorem:policy-gradient-mixed} $\nabla J_{\mathrm{mix}}(k^\ast)=0$.  

    \textbf{Case 3 (single-channel): $\beta=1$.} 
    We consider $Q\!=\!0$ and $R\!=\!1$, which preserves the LQR cost but alters the $\calH_\infty$
    constraint (blue curve in Figure~\ref{fig:1dim-beta2-single}).
    Here, $J_{\mathrm{mix}}$ always admits a stationary point, showing that problem structure beyond $\beta$ also affects the existence of optimal policies.
\end{example}

\section{Analysis via Extended Convex Lifting} \label{section:ECL}

This section exploits the recent \ECL{} framework to prove the global optimality result in Theorem~\ref{thm:mix-stationary-global-optimality}, details the \ECL{} construction for problem \eqref{eq:mix-2ch-problem}, and discusses the solvability of the associated convex reformulation.

\subsection{A framework of Extended Convex Lifting}

The \ECL{} framework \cite{zheng2024benign} studies a class of nonconvex optimization problems using convex tools. The key idea is simple and based on a change of variables. Let $f:\mathbb{R}^n \to \mathbb{R}$ be smooth and nonconvex. Suppose there exists a smooth bijection $y = \phi(x)$ such that $g(y):= f(\phi^{-1}(y))$ becomes convex. It is then clear that $ \min f(x) = \min g(y)$ and every stationary point of $f$ is globally optimal. Here, the mapping $y = \phi(x)$ acts as direct convexification.

The \ECL{} generalizes this idea to a broader class of constrained nonconvex problems that may not admit direct convexification.
Indeed, such problems frequently arise in control, including \eqref{eq:mix-2ch-problem}. 
We first review the definitions and key results of \ECL{}, presenting a simplified version tailored for clarity and relevance to our setting.

For a function $f: \mathcal{D} \to \mathbb{R}$ with an open domain $D \subseteq \mathbb{R}^d$, we define its strict and non-strict epigraphs by
\begin{align*}
    &\operatorname{epi}_>(f) \coloneqq \{(x,\gamma)\in\mathcal{D}\times\mathbb{R}\mid
    \gamma>f(x)
    \},\\ 
    &\operatorname{epi}_\geq(f) \coloneqq \{(x,\gamma)\in\mathcal{D}\times\mathbb{R}\mid
    \gamma\geq f(x)
    \}. 
\end{align*}
\begin{definition}[Extended Convex Lifting \cite{zheng2024benign}] \label{definition:ECL}
    Let $f:\mathcal{D}\rightarrow \mathbb{R}$ be continuous, where $\mathcal{D}\subseteq\mathbb{R}^{d}$ is open. We say that a tuple $(\mathcal{L}_{\mathrm{lft}},\mathcal{F}_{\mathrm{cvx}},\Phi)$ is an \ECL{} of $f$ if the following hold:
    \begin{enumerate}
        \item $\mathcal{L}_{\mathrm{lft}} \subseteq \mathbb{R}^{d}\times\mathbb{R}\times\mathbb{R}^{d_\xi}$ is a lifted set with an extra variable $\xi \in \mathbb{R}^{d_\xi}$, such that its canonical projection onto the first $d\!+\!1$ coordinates, given by
        $ 
            \pi_{x,\gamma}(\mathcal{L}_{\mathrm{lft}})
        \!=\! \{(x,\gamma):\exists \xi \in\mathbb{R}^{d_{\xi}} \; \text{s.t.}\; (x,\gamma,\xi)\! \in \! \mathcal{L}_{\mathrm{lft}}\},
        $ 
        satisfies
        \begin{equation} \label{eq:epi-graph-lifting}
            \operatorname{epi}_>(f)
            \subseteq \pi_{x,\gamma}(\mathcal{L}_{\mathrm{lft}})\subseteq 
            \operatorname{cl}\operatorname{epi}_{\geq} (f).
        \end{equation}
        
        \item $\mathcal{F}_{\mathrm{cvx}} \subseteq \mathbb{R}\times \mathbb{R}^{d+d_\epsilon} $ is a convex set and $\Phi$ is a $C^2$ diffeomorphism from $\mathcal{L}_{\mathrm{lft}}$ to $\mathcal{F}_{\mathrm{cvx}}$.
        
        \item For any $(x,\gamma,\xi)\in \mathcal{L}_{\mathrm{lft}}$, we have $\Phi(x,\gamma,\xi) = (\gamma, \zeta_1) \in \mathcal{F}_{\mathrm{cvx}}$ for some $\zeta_1\in\mathbb{R}^{d+d_\epsilon}$. In other words, the mapping $\Phi$ directly outputs $\gamma$ in the first component.
    \end{enumerate}
\end{definition}

This procedure generalizes direct convexification by introducing epigraphs and a lifting variable $\xi$.
The set inclusion \eqref{eq:epi-graph-lifting} also adds flexibility.
Similar to direct convexification, the existence of an \ECL{} enables the minimization of $f$ over $\mathcal{D}$ to be reformulated as a convex problem.
Specifically, we have the following equivalence \cite{zheng2024benign}
\begin{equation}\label{eq:ECL_equivalent_opt}
\inf_{x\in\mathcal{D}} f(x) = \inf_{(\gamma,\zeta_1)\in\mathcal{F}_{\mathrm{cvx}}} \gamma,
\end{equation}
where the right-hand side is convex by the \ECL{} construction. 

We here define \textit{non-degenerate} points of $f$.
 
\begin{definition}[Non-degenerate points \cite{zheng2024benign}] \label{definition:degenerate-points}
Given an \ECL{} $(\mathcal{L}_{\mathrm{lft}},\mathcal{F}_{\mathrm{cvx}},\Phi)$ for $f$, a point $x\in\mathcal{D}$ is called \textit{non-degenerate} if
$
(x,f(x))\in\pi_{x,\gamma}(\mathcal{L}_{\mathrm{lft}}).
$
\end{definition}

Lastly, the existence of an \ECL{} certifies the global optimality of \textit{non-degenerate} stationary points of $f$. 

\begin{lemma}[{\cite[Theorem 3.2]{zheng2024benign}}] \label{theorem:ECL-Guarantees}
    Let $f:\mathcal{D}\rightarrow\mathbb{R}$ be differentiable, where $\mathcal{D}\subseteq\mathbb{R}^d$ is open. Suppose $f$ admits an \ECL{} $(\mathcal{L}_{\mathrm{lft}},\mathcal{F}_{\mathrm{cvx}},\Phi)$. Then, any non-degenerate stationary point $x^\ast$ satisfying $\nabla f(x^\ast)\!=\!0$ is a global minimizer of $f$ over $\mathcal{D}$. 
\end{lemma}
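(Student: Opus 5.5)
The plan is to argue by contradiction, using the convexity of $\mathcal{F}_{\mathrm{cvx}}$ to build a feasible curve that starts at the lifted stationary point and along which the epigraph level $\gamma$ drops at a linear rate. Stationarity of $x^\ast$ will then force a contradiction to first order.

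\textbf{Step 1 (setup).} Suppose $x^\ast\in\mathcal{D}$ is non-degenerate with $\nabla f(x^\ast)=0$, but some $x'\in\mathcal{D}$ has $f(x')<f(x^\ast)$. Pick $\gamma'$ with $f(x')<\gamma'<f(x^\ast)$. Then $(x',\gamma')\in\operatorname{epi}_>(f)\subseteq\pi_{x,\gamma}(\mathcal{L}_{\mathrm{lft}})$ by \eqref{eq:epi-graph-lifting}, so some $\xi'$ gives $(x',\gamma',\xi')\in\mathcal{L}_{\mathrm{lft}}$. By non-degeneracy (Definition~\ref{definition:degenerate-points}), some $\xi^\ast$ gives $(x^\ast,f(x^\ast),\xi^\ast)\in\mathcal{L}_{\mathrm{lft}}$. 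Applying $\Phi$ and item 3 of Definition~\ref{definition:ECL}, we get the two points
\[
z^\ast=(f(x^\ast),\zeta^\ast)\in\mathcal{F}_{\mathrm{cvx}},\qquad z'=(\gamma',\zeta')\in\mathcal{F}_{\mathrm{cvx}}.
\]

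\textbf{Step 2 (convex path and pull-back).} By convexity, $z(t)=(1-t)z^\ast+tz'$ lies in $\mathcal{F}_{\mathrm{cvx}}$ for $t\in[0,1]$. Its first coordinate is
\[
\gamma(t)=f(x^\ast)-ct,\qquad c:=f(x^\ast)-\gamma'>0.
\]
Define $(x(t),\gamma(t),\xi(t)):=\Phi^{-1}(z(t))\in\mathcal{L}_{\mathrm{lft}}$. The first component is again $\gamma(t)$, because $\Phi$ passes $\gamma$ through unchanged. Since $\Phi$ is a $C^2$ diffeomorphism, $t\mapsto x(t)$ is $C^1$ on $[0,1]$ with $x(0)=x^\ast$, so $\|x(t)-x^\ast\|=O(t)$. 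Membership in $\mathcal{L}_{\mathrm{lft}}$ and \eqref{eq:epi-graph-lifting} give $(x(t),\gamma(t))\in\operatorname{cl}\operatorname{epi}_\geq(f)$.

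\textbf{Step 3 (the delicate point: the closure).} I will show that for small $t$ this closure membership really means $\gamma(t)\ge f(x(t))$. Since $\mathcal{D}$ is open and $x(t)\to x^\ast\in\mathcal{D}$, we have $x(t)\in\mathcal{D}$ for $t$ small. Any point $(x,\gamma)\in\operatorname{cl}\operatorname{epi}_\geq(f)$ with $x\in\mathcal{D}$ is a limit of pairs $(x_k,\gamma_k)$ with $\gamma_k\ge f(x_k)$. Continuity of $f$ at $x$ then gives $\gamma\ge f(x)$. This is the one place where the closure in Definition~\ref{definition:ECL} must be handled with care, and I expect it to be the main technical check. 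The other check is that the $C^2$-diffeomorphism hypothesis on the possibly non-open set $\mathcal{L}_{\mathrm{lft}}$ does yield a differentiable pull-back curve; I will take this as part of the definition's meaning, i.e., $\Phi^{-1}$ extends smoothly near $\mathcal{F}_{\mathrm{cvx}}$ along segments.

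\textbf{Step 4 (contradiction).} For small $t>0$, Step 3 gives
\[
f(x(t))-f(x^\ast)\le -ct.
\]
Differentiability of $f$ at $x^\ast$ with $\nabla f(x^\ast)=0$ gives
\[
f(x(t))-f(x^\ast)=o(\|x(t)-x^\ast\|)=o(t).
\]
Combining the two, $-ct\ge o(t)$, which fails as $t\to0^+$ because $c>0$. Hence no such $x'$ exists, and $x^\ast$ is a global minimizer of $f$ over $\mathcal{D}$.
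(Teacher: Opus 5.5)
Your proof is correct and follows the standard argument behind \cite[Theorem 3.2]{zheng2024benign}, which the paper cites rather than reproves. You join the lifts of $(x^\ast,f(x^\ast))$ and a strictly better point by a segment in $\mathcal{F}_{\mathrm{cvx}}$ and pull it back by $\Phi^{-1}$. Openness of $\mathcal{D}$ and continuity of $f$ then turn closure membership into $f(x(t))\le\gamma(t)$, and $\nabla f(x^\ast)=0$ gives the first-order contradiction. You read the $C^2$ diffeomorphism on non-open sets as local $C^2$ extendability; this is consistent with how the paper verifies that property, by extending $\Phi$ to the open sets $\mathcal{L}_{\mathrm{lft}}^0$ and $\mathcal{F}_{\mathrm{cvx}}^0$. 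In fact, local Lipschitz continuity of $\Phi^{-1}$ is all you need to get $\|x(t)-x^\ast\|=O(t)$.
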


\subsection{Proof of Theorem~\ref{thm:mix-stationary-global-optimality} via \ECL{} construction}

In this section, we construct an \ECL{} for problem \eqref{eq:mix-2ch-problem}, and show that all feasible policies in $\mathcal{K}_\beta$ are non-degenerate under Assumptions~\ref{assumption:1} and \ref{assumption:positive-definiteness}. Theorem~\ref{thm:mix-stationary-global-optimality} then follows directly from Theorem~\ref{theorem:ECL-Guarantees}.
The \ECL{} construction details are involved, which rely on the (non)-strict bounded real lemma.

We start by characterizing the non-strict epigraph of the extended cost $\tilde{J}_{\mathrm{mix}}$, which is central to the \ECL{} construction.
Working with the extended cost simplifies the treatment of boundary cases and also facilitates subsequent proofs.

\begin{lemma} \label{lem:inequalities-for-J}
Consider the extended cost $\tilde{J}_{\mathrm{mix}}:\mathrm{cl}(\mathcal{K}_\beta)\to\mathbb{R}$ defined in \eqref{eq:cost-value-boundary}. Under Assumptions~\ref{assumption:1} and \ref{assumption:positive-definiteness}, a pair $(K,\gamma)\!\in\!\operatorname{epi}_{\geq}(\tilde{J}_{\mathrm{mix}})$ if and only if there exists some $X\!\succ\! 0$ such that
\begin{align*}
    &\gamma \geq \Tr(Q_2X)+\Tr(R_2 KXK^\tr),\\
    &A_KX + XA_K^\tr + W+ \beta^{-2}XS_K X \preceq 0.
\end{align*}
    
\end{lemma}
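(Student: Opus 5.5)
The plan is to prove the two directions separately. The central tool is the comparison principle for the Riccati operator
$$\mathcal{R}_K(X):=A_KX+XA_K^\tr+\beta^{-2}XS_KX+W,$$
combined with the minimality of the stabilizing or minimal solution $X_K$ quoted from \cite[Corollary 13.13]{zhou1996robust}. We write $\tilde J_{\mathrm{mix}}(K)=\Tr(Q_2X_K)+\Tr(R_2KX_KK^\tr)$, which agrees with \eqref{eq:mix-cost-2ch} and \eqref{eq:cost-value-boundary}. This expression is monotone in $X$ in the Loewner order, because $Q_2\succeq0$ and $K^\tr R_2K\succeq 0$.

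\emph{($\Rightarrow$)} Let $(K,\gamma)\in\operatorname{epi}_\geq(\tilde J_{\mathrm{mix}})$, so that $K\in\mathrm{cl}(\mathcal{K}_\beta)$. We take $X=X_K$, which is the stabilizing solution if $K\in\mathcal{K}_\beta$ and the minimal solution if $K\in\partial\mathcal{K}_\beta$. It satisfies $\mathcal{R}_K(X_K)=0\preceq0$ and $\gamma\ge \tilde J_{\mathrm{mix}}(K)$.

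It remains to check $X_K\succ0$. Since $A_K$ is Hurwitz by Theorem~\ref{prop:closure}, $A_K X_K+X_KA_K^\tr=-W-\beta^{-2}X_KS_KX_K\preceq -W\prec0$. The Lyapunov solution formula then gives $X_K\succeq \hat X_K\succ0$, where $\hat X_K$ solves \eqref{eq:H2-Lyapunov} and is positive definite because $W\succ0$.

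\emph{($\Leftarrow$)} Suppose $X\succ0$ satisfies $\mathcal{R}_K(X)\preceq0$ together with the trace bound. The proof has two steps.

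First, we show $K\in\mathrm{cl}(\mathcal{K}_\beta)$. Since $A_KX+XA_K^\tr\preceq -W\prec0$ with $X\succ0$, $A_K$ is Hurwitz, so $K\in\mathcal{K}$. Next, the non-strict bounded real lemma gives $\|\mathbf{T}_\infty(K)\|_{\mathcal{H}_\infty}\le\beta$. To see this directly, perturb to $X_\epsilon=(1+\epsilon)X$ and a slightly larger $\beta$, or write $\mathcal{R}_K(X)\preceq0$ as the dissipation inequality along trajectories of $\dot x=A_K^\tr x+\cdots$. Equivalently, for each $\beta'>\beta$ the same $X$ gives a strict inequality $A_KX+XA_K^\tr+\beta'^{-2}XS_KX+W\preceq -\beta^{-2}(1-\beta^2/\beta'^2)XS_KX\prec0$, since $S_K\succ0$ and $X\succ0$. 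Lemma~\ref{lem:BRL-X} then yields a norm below $\beta'$, and letting $\beta'\downarrow\beta$ gives the bound. Theorem~\ref{prop:closure} places $K$ in $\mathrm{cl}(\mathcal{K}_\beta)$.

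Second, we show $X\succeq X_K$, which gives $\gamma\ge\tilde J_{\mathrm{mix}}(K)$ by the monotonicity noted above. This is the expected obstacle, especially at boundary points. For $K\in\mathcal{K}_\beta$, subtract the two Riccati relations. With $\Delta=X-X_K$, we get $\tilde A_K\Delta+\Delta\tilde A_K^\tr+\beta^{-2}\Delta S_K\Delta=\mathcal{R}_K(X)\preceq0$, where $\tilde A_K=A_K+\beta^{-2}X_KS_K$ is Hurwitz. Thus $\tilde A_K\Delta+\Delta\tilde A_K^\tr\preceq0$, and therefore $\Delta\succeq0$.

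For $K\in\partial\mathcal{K}_\beta$, $\tilde A_K$ has only closed-left-half-plane spectrum, so this argument fails. Here we use the supersolution characterization of the minimal solution: the comparison theorem \cite[Theorem 9.1.1/Corollary 13.13]{lancaster1995algebraic,zhou1996robust} states that the minimal (maximal, in the dual form) solution lies below every $X$ with $\mathcal{R}_K(X)\preceq0$. Alternatively, we approximate via $\beta'\downarrow\beta$. Since $\|\mathbf{T}_\infty(K)\|<\beta'$, the previous case applied with $\beta'$ gives $X\succeq X_K^{\beta'}$, where $X_K^{\beta'}$ is the stabilizing solution for $\beta'$; note that $X$ is also a supersolution for $\beta'$. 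Continuity of minimal Riccati solutions in the parameter \cite[Theorem 11.2.1]{lancaster1995algebraic}, the same tool used for Lemma~\ref{thm:continuity-mix-cost}, gives $X_K^{\beta'}\to X_K$. Hence $X\succeq X_K$, which completes the proof.
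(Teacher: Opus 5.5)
Your proof is correct and follows the same route as the paper. For ($\Rightarrow$) you take $X=X_K$; for ($\Leftarrow$) you show $A_K$ is Hurwitz, obtain $\|\mathbf{T}_\infty(K)\|_{\mathcal{H}_\infty}\le\beta$ from the non-strict bounded real lemma so that Theorem~\ref{prop:closure} gives $K\in\mathrm{cl}(\mathcal{K}_\beta)$, and conclude with $X\succeq X_K$ and monotonicity of the trace. The details you add are sound and fill in steps the paper only cites or asserts: the Lyapunov argument for $X_K\succ0$, the $\beta'\downarrow\beta$ derivation of the non-strict bounded real lemma, and the explicit comparison $X\succeq X_K$ for Riccati-inequality solutions, which the paper just calls the ``minimal solution property''.
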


The proof of Lemma~\ref{lem:inequalities-for-J} is deferred to Appendix~\ref{subsec:proof-strict-lmis}. We now construct an \ECL{} for \eqref{eq:mix-2ch-problem} in three steps.

\begin{subequations} \label{eq:ECL-2ch-all}
\vspace{3pt}
\noindent \textbf{Step 1: Lifted set.} Motivated by Lemma~\ref{lem:inequalities-for-J}, we introduce a lifted set $\mathcal{L}_\mathrm{lft}$ with an extra variable $X$
\begin{equation} \label{eq:lifted-set-mixed-state-feedback}
    \scalebox{0.85}{$
    \mathcal{L}_{\mathrm{lft}}
    \coloneqq \left\{(K,\gamma,X)\left|  \begin{aligned}
    &K \in \mathbb{R}^{m \times n},\ \gamma\in\mathbb{R},\ X\succ 0,\\
    &\gamma\geq \Tr(Q_2X) + \Tr(R_2 KXK^\tr), \\
    &A_KX \!+\! XA_K^\tr \!+\! \beta^{-2}XS_K X \!+\! W \preceq 0
    \end{aligned} \right.
    \right\}.
    $}
\end{equation}

\vspace{3pt}
\noindent \textbf{Step 2: Convex set.} We define a convex set 
\begin{equation} \label{eq:convex-set-mixed-state-feedback}
    \scalebox{0.85}{$
    \mathcal{F}_{\mathrm{cvx}} \coloneqq
    \left\{
    (\gamma, X, Y)\left| 
    \begin{aligned}
        \gamma\!\in\!\mathbb{R},\ X\!\succ\!0,\ Y\!\in\!\mathbb{R}^{m\times n},\ \mathbb{F}_{\mathrm{cvx}}\!\preceq\!0,\\
            \gamma\geq\Tr(Q_2X)+\Tr(R_2YX^{-1}Y^\tr)\\ 
    \end{aligned}\right.
    \right\},
    $}
\end{equation}
where $\mathbb{F}_{\mathrm{cvx}}$ is defined in \eqref{eq:mix-feasible-set-connectivity}.

\vspace{3pt}
\noindent \textbf{Step 3: Diffeomorphism.} Using the classical change of variables $Y\!=\!KX$, we define the mapping
\begin{equation} \label{eq:mapping-mixed-state-feedback}
\Phi(K,\gamma, X) \coloneqq (\gamma, X,KX),\quad\forall (K,\gamma,X)\in\mathcal{L}_{\mathrm{lft}}.
\end{equation}
    
\end{subequations}

Lemma~\ref{lem:inequalities-for-J} immediately yields the following key result, which validates the \ECL{} construction.

\begin{corollary}  \label{coro:epi-pi-same}
Under Assumptions~\ref{assumption:1} and \ref{assumption:positive-definiteness}, we have
\[
\operatorname{epi}_\geq(\tilde{J}_{\mathrm{mix}})\!=\!\pi_{K,\gamma}(\mathcal{L}_{\mathrm{lft}}).
\]
\end{corollary}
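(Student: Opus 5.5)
The plan is to obtain the identity directly from Lemma~\ref{lem:inequalities-for-J} by unfolding both sets and comparing their defining conditions. Everything is read with $\tilde{J}_{\mathrm{mix}}$ as the extended cost on its domain $\mathrm{cl}(\mathcal{K}_\beta)$: it equals $J_{\mathrm{mix}}$ on $\mathcal{K}_\beta$ and is given by \eqref{eq:cost-value-boundary} on $\partial\mathcal{K}_\beta$. Accordingly, $\operatorname{epi}_\geq(\tilde{J}_{\mathrm{mix}})=\{(K,\gamma)\mid K\in\mathrm{cl}(\mathcal{K}_\beta),\ \gamma\ge\tilde{J}_{\mathrm{mix}}(K)\}$. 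By definition \eqref{eq:lifted-set-mixed-state-feedback}, the projection $\pi_{K,\gamma}(\mathcal{L}_{\mathrm{lft}})$ is the set of pairs $(K,\gamma)$ with $K\in\mathbb{R}^{m\times n}$ for which some $X\succ0$ satisfies $\gamma\ge \Tr(Q_2X)+\Tr(R_2KXK^\tr)$ together with the non-strict Riccati inequality. These are exactly the two conditions in Lemma~\ref{lem:inequalities-for-J}, so I will prove the two inclusions by reading that lemma in each direction.

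For ``$\subseteq$'', I take $(K,\gamma)\in\operatorname{epi}_\geq(\tilde{J}_{\mathrm{mix}})$. Lemma~\ref{lem:inequalities-for-J} supplies an $X\succ0$ meeting both inequalities. Hence $(K,\gamma,X)\in\mathcal{L}_{\mathrm{lft}}$, and therefore $(K,\gamma)\in\pi_{K,\gamma}(\mathcal{L}_{\mathrm{lft}})$.

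For ``$\supseteq$'', I take $(K,\gamma,X)\in\mathcal{L}_{\mathrm{lft}}$. The one point needing care is that the lifted set does not assume $K\in\mathrm{cl}(\mathcal{K}_\beta)$, whereas the lemma is stated for $(K,\gamma)$ already in the domain of $\tilde{J}_{\mathrm{mix}}$. So I have to check that the ``if'' direction of Lemma~\ref{lem:inequalities-for-J} also covers an arbitrary $K$, i.e., that any $X\succ0$ satisfying the non-strict Riccati inequality forces $K\in\mathrm{cl}(\mathcal{K}_\beta)$. If the lemma's proof does not already give this, I will argue it directly in two steps.

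First, stability. The Riccati inequality gives $A_KX+XA_K^\tr\preceq -W\prec0$ with $X\succ0$, which is a strict Lyapunov inequality, so $A_K$ is Hurwitz and $K\in\mathcal{K}$.

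Second, the $\mathcal{H}_\infty$ bound. The non-strict inequality with $X\succ0$ yields $\|\mathbf{T}_\infty(K)\|_{\mathcal{H}_\infty}\le\beta$ through the non-strict bounded real lemma. I will show this with a frequency-domain completion-of-squares argument: from the inequality one gets $\beta^{-2}\,G(j\omega)^*G(j\omega)\preceq I$ for all $\omega$. Alternatively, I would perturb to a strict inequality for each $\beta'>\beta$ and let $\beta'\downarrow\beta$.

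By Theorem~\ref{prop:closure}, these two steps place $K$ in $\mathrm{cl}(\mathcal{K}_\beta)$. Lemma~\ref{lem:inequalities-for-J} then gives $\gamma\ge\tilde{J}_{\mathrm{mix}}(K)$, which completes the inclusion and hence the equality. I expect this domain check to be the only real obstacle, and Lemma~\ref{lem:inequalities-for-J} probably already handles it.
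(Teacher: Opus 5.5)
Your proposal is correct and follows the paper's route. The paper obtains the corollary directly from Lemma~\ref{lem:inequalities-for-J}, and that lemma's ``if'' direction already contains the domain check you flag: the strict Lyapunov inequality makes $A_K$ Hurwitz, the non-strict bounded real lemma gives $\|\mathbf{T}_\infty(K)\|_{\mathcal{H}_\infty}\le\beta$ (so $K\in\mathrm{cl}(\mathcal{K}_\beta)$ by Theorem~\ref{prop:closure}), and the minimal-solution property yields $\gamma\ge\tilde{J}_{\mathrm{mix}}(K)$. Your $\beta'\downarrow\beta$ perturbation is a valid self-contained replacement for the cited non-strict bounded real lemma, since $XS_KX\succ0$.
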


The following result validates that the constructed tuple $(\mathcal{L}_{\mathrm{lft}}, \mathcal{F}_{\mathrm{cvx}},\Phi)$ forms an \ECL{} for \eqref{eq:mix-2ch-problem}.

\begin{theorem} \label{proposition:mixed-lifting}
    Under Assumptions~\ref{assumption:1} and \ref{assumption:positive-definiteness}, the canonical projection of $\mathcal{L}_{\mathrm{lft}}$ onto $(K,\gamma)$, denoted $\pi_{K,\gamma}(\mathcal{L}_{\mathrm{lft}})$, satisfies
    \begin{align} \label{eq:ECL-inclusion}
        \operatorname{epi}_\geq(J_{\mathrm{mix}}) \subseteq \pi_{K,\gamma}(\mathcal{L}_{\mathrm{lft}})
        = \operatorname{cl}\operatorname{epi}_\geq(J_{\mathrm{mix}}).
    \end{align}
    Moreover, the mapping $\Phi$ given by \eqref{eq:mapping-mixed-state-feedback} is a $C^2$ (and in fact $C^\infty$) diffeomorphism from $\mathcal{L}_{\mathrm{lft}}$ in \eqref{eq:lifted-set-mixed-state-feedback} to $\mathcal{F}_{\mathrm{cvx}}$ in \eqref{eq:convex-set-mixed-state-feedback}. 
\end{theorem}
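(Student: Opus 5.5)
By Corollary~\ref{coro:epi-pi-same}, $\pi_{K,\gamma}(\mathcal{L}_{\mathrm{lft}})=\operatorname{epi}_\geq(\tilde{J}_{\mathrm{mix}})$, where $\tilde{J}_{\mathrm{mix}}$ is the extension of $J_{\mathrm{mix}}$ to $\mathrm{cl}(\mathcal{K}_\beta)$. The inclusion part of \eqref{eq:ECL-inclusion} therefore reduces to two set identities: $\operatorname{epi}_\geq(J_{\mathrm{mix}})\subseteq\operatorname{epi}_\geq(\tilde{J}_{\mathrm{mix}})$, and $\operatorname{epi}_\geq(\tilde{J}_{\mathrm{mix}})=\operatorname{cl}\operatorname{epi}_\geq(J_{\mathrm{mix}})$. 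The first is immediate, since $\tilde{J}_{\mathrm{mix}}$ agrees with $J_{\mathrm{mix}}$ on $\mathcal{K}_\beta$. The diffeomorphism claim is then a separate algebraic check based on the Schur complement.

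For the closure identity, I take the closure inside the ambient space relevant for the \ECL{}. I will argue it within $\mathcal{K}\times\mathbb{R}$; if the full space is needed, I add the partial coercivity $J_{\mathrm{mix}}\ge J_{\mathrm{LQR}}\to\infty$ at the boundary of $\mathcal{K}$.

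For ``$\supseteq$'', take $(K_j,\gamma_j)\to(K,\gamma)$ with $K_j\in\mathcal{K}_\beta$ and $\gamma_j\ge J_{\mathrm{mix}}(K_j)$. By Theorem~\ref{prop:closure}, $K\in\mathrm{cl}(\mathcal{K}_\beta)$. If $K\in\mathcal{K}_\beta$, continuity of $J_{\mathrm{mix}}$ (Theorem~\ref{theorem:analytical-functions}) gives $\gamma\ge J_{\mathrm{mix}}(K)$. If $K\in\partial\mathcal{K}_\beta$, Lemma~\ref{thm:continuity-mix-cost} gives $J_{\mathrm{mix}}(K_j)\to\tilde{J}_{\mathrm{mix}}(K)$, so again $\gamma\ge\tilde{J}_{\mathrm{mix}}(K)$.

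For ``$\subseteq$'', take $(K,\gamma)$ with $K\in\mathrm{cl}(\mathcal{K}_\beta)$ and $\gamma\ge\tilde{J}_{\mathrm{mix}}(K)$. By Theorem~\ref{prop:closure} there are $K_j\in\mathcal{K}_\beta$ with $K_j\to K$. Lemma~\ref{thm:continuity-mix-cost}, or plain continuity in the interior, gives $J_{\mathrm{mix}}(K_j)\to\tilde{J}_{\mathrm{mix}}(K)$. Setting $\gamma_j:=\max\{\gamma,J_{\mathrm{mix}}(K_j)\}\to\gamma$ produces a sequence in $\operatorname{epi}_\geq(J_{\mathrm{mix}})$ converging to $(K,\gamma)$.

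For the diffeomorphism, I first show that $\Phi$ maps $\mathcal{L}_{\mathrm{lft}}$ into $\mathcal{F}_{\mathrm{cvx}}$. With $Y=KX$ and $X\succ0$, we have $\Tr(R_2KXK^\tr)=\Tr(R_2YX^{-1}Y^\tr)$. Expanding,
\[
A_KX+XA_K^\tr+\beta^{-2}XS_KX+W=AX+XA^\tr+BY+Y^\tr B^\tr+W+\beta^{-2}(XQ_\infty X+Y^\tr R_\infty Y).
\]
The non-strict Schur complement with respect to the block $-\beta^2 I\prec0$ shows that this matrix is $\preceq0$ if and only if $\mathbb{F}_{\mathrm{cvx}}\preceq0$. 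The Schur step is valid in the non-strict case precisely because the pivot block is strictly negative definite.

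Next, the inverse is $\Psi(\gamma,X,Y)=(YX^{-1},\gamma,X)$, and the same equivalences read backwards show that $\Psi$ maps $\mathcal{F}_{\mathrm{cvx}}$ into $\mathcal{L}_{\mathrm{lft}}$. Both $\Phi$ and $\Psi$ are restrictions of maps that are rational, hence $C^\infty$, on the open set $\{X\succ0\}$, so $\Phi$ is a $C^\infty$ diffeomorphism between the two sets. The third \ECL{} requirement holds trivially because $\Phi$ returns $\gamma$ in its first component.

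Convexity of $\mathcal{F}_{\mathrm{cvx}}$ is not part of this statement, but I would note it for completeness. $\mathbb{F}_{\mathrm{cvx}}$ is affine in $(X,Y)$. The cost constraint $\gamma-\Tr(Q_2X)\ge\Tr(R_2YX^{-1}Y^\tr)$ is convex because $(Y,X)\mapsto YX^{-1}Y^\tr$ is matrix-convex on $X\succ0$; equivalently, it can be written as an LMI with an auxiliary variable.

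I expect the main obstacle to be the boundary part of the closure identity, which rests on Theorem~\ref{prop:closure} and Lemma~\ref{thm:continuity-mix-cost}. Since both are taken as given here, the remaining delicate point is making sure the domain conventions for the closure in \eqref{eq:ECL-inclusion} match. If the closure is meant in $\mathbb{R}^{m\times n}\times\mathbb{R}$, I will use $J_{\mathrm{mix}}\ge J_{\mathrm{LQR}}$, together with $J_{\mathrm{LQR}}\to\infty$ as $K\to\partial\mathcal{K}$, to exclude limit points with $K\notin\mathcal{K}$.
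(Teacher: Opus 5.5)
Your proposal is correct and follows essentially the paper's proof: you reduce the projection identity to $\operatorname{epi}_\geq(\tilde{J}_{\mathrm{mix}})=\operatorname{cl}\operatorname{epi}_\geq(J_{\mathrm{mix}})$ via Corollary~\ref{coro:epi-pi-same} and the boundary continuity of Lemma~\ref{thm:continuity-mix-cost}, and you verify the diffeomorphism through the rational inverse $\Psi(\gamma,X,Y)=(YX^{-1},\gamma,X)$ together with the non-strict Schur complement. Your choice $\gamma_j=\max\{\gamma,J_{\mathrm{mix}}(K_j)\}$ is slightly tighter than the paper's $\gamma+1/n$, which needs a subsequence to guarantee epigraph membership; your LQR-coercivity fallback is unnecessary, since Theorem~\ref{prop:closure} already shows that the closure of $\mathcal{K}_\beta$ in $\mathbb{R}^{m\times n}$ lies inside $\mathcal{K}$.
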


Note that the set inclusion in \eqref{eq:ECL-inclusion} follows directly from Corollary~\ref{coro:epi-pi-same}, whereas proving the set identity in \eqref{eq:ECL-inclusion} requires an additional argument showing that $\operatorname{epi}_\geq(\tilde{J}_{\mathrm{mix}}) = \operatorname{cl}\operatorname{epi}_\geq(J_{\mathrm{mix}})$, with proof details given in Appendix~\ref{subsection:proof-mixed-lifting}.

Since $\operatorname{epi}_>(J_{\mathrm{mix}}) \subseteq \operatorname{epi}_\geq(J_{\mathrm{mix}})$, the inclusion in \eqref{eq:ECL-inclusion} is stronger than that required by the \ECL{} definition in \eqref{eq:epi-graph-lifting}, which enlarges the set of non-degenerate points and allows \ECL{} to certify global optimality over $\mathcal{K}_\beta$.

\begin{remark}[Another valid \ECL{}] \label{remark:strict-ECL}
     In constructing $\mathcal{L}_{\mathrm{lft}}$ in \eqref{eq:lifted-set-mixed-state-feedback}, the nonstrict Riccati inequality is crucial for establishing $\operatorname{epi}_\geq(J_{\mathrm{mix}}) \subseteq \pi_{K,\gamma}(\mathcal{L}_{\mathrm{lft}})$.
     If instead the strict Riccati inequality is used, the construction still yields a valid \ECL{}, but only ensures the weaker inclusion $\operatorname{epi}_>(J_{\mathrm{mix}}) \subseteq \pi_{K,\gamma}(\mathcal{L}_{\mathrm{lft}})$.
\end{remark}

By Lemma~\ref{theorem:ECL-Guarantees}, an \ECL{} certifies the global optimality of any non-degenerate stationary points.
According to Definition~\ref{definition:degenerate-points} and the inclusion $\operatorname{epi}_\geq(J_{\mathrm{mix}}) \subseteq \pi_{K,\gamma}(\mathcal{L}_{\mathrm{lft}})$ shown in Theorem~\ref{proposition:mixed-lifting}, we have the following desirable property.
\begin{corollary}\label{fact:Hmix-non-degenearte}
    Using the \ECL{} $(\mathcal{L}_{\mathrm{lft}}, \mathcal{F}_{\mathrm{cvx}},\Phi)$ in \eqref{eq:ECL-2ch-all}, any $K \in \mathcal{K}_{\beta}$ is non-degenerate.
\end{corollary}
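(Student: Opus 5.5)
By Definition~\ref{definition:degenerate-points}, the statement reduces to showing that $(K,J_{\mathrm{mix}}(K))\in\pi_{K,\gamma}(\mathcal{L}_{\mathrm{lft}})$ for every $K\in\mathcal{K}_\beta$. The plan is to produce the lifting variable explicitly rather than appeal to closures. Fix $K\in\mathcal{K}_\beta$ and take $X:=X_K$, the stabilizing solution of the Riccati equation \eqref{eq:mix-2ch-Riccati-XK}, with $\gamma:=J_{\mathrm{mix}}(K)$. We then check that $(K,\gamma,X_K)\in\mathcal{L}_{\mathrm{lft}}$.

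The three defining conditions of \eqref{eq:lifted-set-mixed-state-feedback} are checked in turn.
\begin{enumerate}
\item \emph{Positive definiteness, $X_K\succ0$.} Lemma~\ref{lem:BRL-X} gives $X_K\succeq0$. Strictness follows from Assumption~\ref{assumption:positive-definiteness}. Since $W\succ0$ and $A_K$ is Hurwitz, we can write $-(A_KX_K+X_KA_K^\tr)=W+\beta^{-2}X_KS_KX_K\succ0$, so $X_K=\int_0^\infty e^{A_Kt}(W+\beta^{-2}X_KS_KX_K)e^{A_K^\tr t}\,dt\succ0$. Alternatively, controllability of $(A_K,B_w)$, which holds because $B_wB_w^\tr=W\succ0$, lets us use the last clause of Lemma~\ref{lem:BRL-X}.
\item \emph{Riccati inequality.} The non-strict Riccati inequality holds with equality, because $X_K$ solves \eqref{eq:mix-2ch-Riccati-XK} exactly.
\item \emph{Cost inequality.} By \eqref{eq:mix-cost-2ch}, $\gamma=\Tr((Q_2+K^\tr R_2K)X_K)=\Tr(Q_2X_K)+\Tr(R_2KX_KK^\tr)$, using cyclicity of the trace. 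So the cost inequality also holds with equality.
\end{enumerate}

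Hence $(K,J_{\mathrm{mix}}(K))\in\pi_{K,\gamma}(\mathcal{L}_{\mathrm{lft}})$. Equivalently, this follows from the inclusion $\operatorname{epi}_\geq(J_{\mathrm{mix}})\subseteq\pi_{K,\gamma}(\mathcal{L}_{\mathrm{lft}})$ in Theorem~\ref{proposition:mixed-lifting}, since $(K,J_{\mathrm{mix}}(K))\in\operatorname{epi}_\geq(J_{\mathrm{mix}})$.

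The only point needing care is the strict positivity $X_K\succ0$, which is what the lifted set demands. This is exactly where Assumption~\ref{assumption:positive-definiteness} ($W\succ0$) enters, and the Lyapunov integral representation above settles it.

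The essential feature is that the lifted set uses the \emph{non-strict} inequality. With a strict Riccati inequality, the equality solution $X_K$ would be excluded, and only points with $\gamma>J_{\mathrm{mix}}(K)$ would be covered (cf.\ Remark~\ref{remark:strict-ECL}).

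Combining this corollary with Lemma~\ref{theorem:ECL-Guarantees} and Theorem~\ref{proposition:mixed-lifting} then yields Theorem~\ref{thm:mix-stationary-global-optimality}.
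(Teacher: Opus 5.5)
Your proof is correct and is essentially the paper's argument. The paper cites the inclusion $\operatorname{epi}_\geq(J_{\mathrm{mix}})\subseteq\pi_{K,\gamma}(\mathcal{L}_{\mathrm{lft}})$ from Theorem~\ref{proposition:mixed-lifting}, which rests on the ``$\Rightarrow$'' direction of Lemma~\ref{lem:inequalities-for-J} choosing exactly $X=X_K$; you have unwound that step and given a cleaner justification of $X_K\succ0$ via the Lyapunov integral. One small caveat on your closing aside: with a strict Riccati inequality, $(K,J_{\mathrm{mix}}(K))$ is still covered in the degenerate case $Q_2=0$, $K=0$, where the cost does not depend on $X$, so ``only $\gamma>J_{\mathrm{mix}}(K)$'' holds generically rather than always.
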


Considering Fact~\ref{fact:Hmix-non-degenearte} and the differentiability of $J_{\mathrm{mix}}$ over $\mathcal{K}_{\beta}$ (Lemma~\ref{theorem:policy-gradient-mixed}), the global optimality established in Theorem~\ref{thm:mix-stationary-global-optimality} follows directly from the \ECL{} guarantee in Lemma~\ref{theorem:ECL-Guarantees}.

\subsection{Convex reformulation and its solvability}

The \ECL{} construction also leads to a convex reformulation in \eqref{eq:ECL_equivalent_opt}, which offers further insight into problem \eqref{eq:mix-2ch-problem}.
\begin{theorem} \label{thm:mix-convex-reformulation}
    Consider problem \eqref{eq:mix-2ch-problem} and the convex set $\mathcal{F}_{\mathrm{cvx}}$ in \eqref{eq:convex-set-mixed-state-feedback}. Under Assumptions~\ref{assumption:1} and \ref{assumption:positive-definiteness}, we have
    \begin{equation} \label{eq:mix-2ch-LMI}
        \inf_{K \in \mathcal{K}_{\beta}} J_{\mathrm{mix}}(K) = \min_{(\gamma, X, Y)\in {\mathcal{F}}_{\mathrm{cvx}}} \gamma.
    \end{equation}
\end{theorem}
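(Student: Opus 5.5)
The proof has two parts: first show that the two infima coincide, then show that the right-hand side is attained. For the first part, the ECL equivalence \eqref{eq:ECL_equivalent_opt} together with Theorem~\ref{proposition:mixed-lifting} gives $\inf_{(\gamma,X,Y)\in\mathcal{F}_{\mathrm{cvx}}}\gamma=\inf\{\gamma:(K,\gamma)\in\pi_{K,\gamma}(\mathcal{L}_{\mathrm{lft}})\}$. The reason is that $\Phi$ is a bijection from $\mathcal{L}_{\mathrm{lft}}$ onto $\mathcal{F}_{\mathrm{cvx}}$ that preserves the $\gamma$-coordinate.

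By \eqref{eq:ECL-inclusion}, the projection equals $\operatorname{cl}\operatorname{epi}_\geq(J_{\mathrm{mix}})$. The infimum of $\gamma$ over the closure of the epigraph equals $\inf_{K\in\mathcal{K}_\beta}J_{\mathrm{mix}}(K)$, because taking closures does not change the infimum of the continuous linear functional $\gamma$. This establishes the equality of values, with ``inf'' on the right for now.

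Before proving attainment, I would check that the common value is finite. It is bounded below by $0$ since $Q_2\succeq0$, $R_2\succ0$, and $X\succ0$. It is not $+\infty$ because $\mathcal{K}_\beta$ is nonempty by Lemma~\ref{lemma:path-connectedness}.

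For attainment, the key point is that $\mathcal{F}_{\mathrm{cvx}}$ is defined by \emph{non-strict} inequalities, except for $X\succ0$. By Corollary~\ref{coro:epi-pi-same}, $\pi_{K,\gamma}(\mathcal{L}_{\mathrm{lft}})=\operatorname{epi}_\geq(\tilde J_{\mathrm{mix}})$ on $\operatorname{cl}(\mathcal{K}_\beta)$. Hence it suffices to show that $\tilde J_{\mathrm{mix}}$ attains its infimum on $\operatorname{cl}(\mathcal{K}_\beta)$. The minimizing $\tilde K$ then yields $(\tilde J(\tilde K),X_{\tilde K},\tilde KX_{\tilde K})\in\mathcal{F}_{\mathrm{cvx}}$ via Lemma~\ref{lem:inequalities-for-J}, which uses the minimal Riccati solution.

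I would take a minimizing sequence $K_j\in\mathcal{K}_\beta$. Partial coercivity holds because $J_{\mathrm{mix}}\ge J_{\mathrm{LQR}}$, which is coercive. This bounds $\|K_j\|$ and keeps $K_j$ away from the boundary of $\mathcal{K}$ (marginal stability), again by coercivity of $J_{\mathrm{LQR}}$ on $\mathcal{K}$ under detectability and $W\succ0$. So a subsequence converges to some $K_0\in\mathcal{K}$ with $\|\mathbf T_\infty(K_0)\|_{\calH_\infty}\le\beta$, i.e. $K_0\in\operatorname{cl}(\mathcal{K}_\beta)$ by Theorem~\ref{prop:closure}. Lemma~\ref{thm:continuity-mix-cost}, or continuity in the interior, then gives $\tilde J_{\mathrm{mix}}(K_0)=\lim J_{\mathrm{mix}}(K_j)$.

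An alternative route avoids extracting $K$: take a minimizing sequence in $\mathcal{F}_{\mathrm{cvx}}$ directly. The $(1,1)$ block of $\mathbb{F}_{\mathrm{cvx}}\preceq0$ together with $W\succ0$ forces $X\succeq\hat X_K\succ0$ uniformly, so limits stay in $X\succ0$. Boundedness of $\gamma$ bounds $\operatorname{tr}(YX^{-1}Y^\tr)$, and closedness of the remaining constraints completes that argument.

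The main obstacle is ruling out escape to the marginally stabilizing boundary of $\mathcal{K}$, where the limit $K_0$ would lie outside the domain of $\tilde J_{\mathrm{mix}}$. I would handle this with the LQR lower bound: since $W\succ0$ and $(Q_2^{1/2},A)$ is detectable, $J_{\mathrm{LQR}}(K)\to\infty$ as $A_K$ approaches instability. The bound $J_{\mathrm{mix}}\ge J_{\mathrm{LQR}}$ then excludes such limits. In the direct-LMI route, the corresponding difficulty is keeping a uniform lower bound on $X$, and the $W\succ0$ argument supplies it.
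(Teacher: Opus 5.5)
Your proof is correct and follows essentially the paper's route. The value equality is the \ECL{} identity \eqref{eq:ECL_equivalent_opt} applied to the lifting of Theorem~\ref{proposition:mixed-lifting}, and attainment is the argument of Proposition~\ref{thm:solvability}: a bounded minimizing sequence, a limit in $\operatorname{cl}(\mathcal{K}_\beta)$, continuity of $\tilde J_{\mathrm{mix}}$ via Lemma~\ref{thm:continuity-mix-cost}, and lifting via Corollary~\ref{coro:epi-pi-same}. You spell this out more explicitly than the paper does.

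The one variation is that you exclude marginally stabilizing limits through coercivity of $J_{\mathrm{LQR}}$, whereas the paper uses $\operatorname{cl}(\mathcal{K}_\beta)\subseteq\mathcal{K}$ from Theorem~\ref{prop:closure}; both are valid. Your sketched direct-LMI alternative is incomplete as written. It needs $\|K\|$ bounded before the lower bound on $X$ becomes uniform. It also needs an upper bound on $X$, which must come from $Q_\infty\succ0$ in the Riccati inequality rather than from $\gamma$, since $Q_2$ is only semidefinite.
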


We use ``min'' in \eqref{eq:mix-2ch-LMI} to indicate that the optimum is always attained (see Proposition~\ref{thm:solvability}).
Unlike \eqref{eq:mix-2ch-problem} optimizing $K$ over the policy space $\mathcal{K}_\beta$, the convex reformulation in \eqref{eq:mix-2ch-LMI} optimizes $(\gamma,X,Y)$ over the higher-dimensional set ${\mathcal{F}}_{\mathrm{cvx}}$. 

The result most relevant to Theorem~\ref{thm:mix-convex-reformulation} is \cite[Theorem 4.3]{khargonekar1991mixed}, which gives a similar reformulation via strict LMIs (equations (30)-(31) therein).
While the strict formulation yields the same optimal value, it does not capture the global optimality of all stationary points (Theorem~\ref{thm:mix-stationary-global-optimality}).

We now address the solvability of the reformulation.

\begin{proposition} \label{thm:solvability}
    Let $\gamma^\ast$ be the (finite) optimal value of \eqref{eq:mix-2ch-problem}. Under Assumptions~\ref{assumption:1} and \ref{assumption:positive-definiteness}, the following statements hold.
    \begin{enumerate}
        \item The reformulation in \eqref{eq:mix-2ch-LMI}, i.e. {$\min_{(\gamma, X, Y)\in{\mathcal{F}}_{\mathrm{cvx}}} \gamma$}, admits a solution. That is, there exists $(\gamma^\ast,X^\ast, Y^\ast)\in{\mathcal{F}}_{\mathrm{cvx}}$ attaining the minimum $\gamma^\ast$.
        
        \item The corresponding policy $K^\ast\!:=\!Y^\ast(X^\ast)^{-1}$ lies in $\mathrm{cl}(\mathcal{K}_\beta)$ with its extended cost value $\tilde{J}_{\mathrm{mix}}(K^\ast)=\gamma^\ast$.
    \end{enumerate}
\end{proposition}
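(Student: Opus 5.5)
Part 1 (attainment) comes from a compactness argument carried out in the policy space, using the closure characterization (Theorem~\ref{prop:closure}), the continuity of the extended cost (Lemma~\ref{thm:continuity-mix-cost}), and partial coercivity. Part 2 then follows from the \ECL{} identities.

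\textbf{Step 1: pick a minimizing sequence and show it is bounded.} Theorem~\ref{thm:mix-convex-reformulation} gives $\gamma^\ast=\inf_{\mathcal{K}_\beta}J_{\mathrm{mix}}$. Take $K_j\in\mathcal{K}_\beta$ with $J_{\mathrm{mix}}(K_j)\downarrow\gamma^\ast$. Since $X_{K_j}\succeq\hat X_{K_j}$, we have $J_{\mathrm{mix}}\ge J_{\mathrm{LQR}}$. Under Assumption~\ref{assumption:positive-definiteness} we have $W\succ0$ and $R_2\succ0$, so $J_{\mathrm{LQR}}$ is coercive on $\mathcal{K}$: it blows up both as $\|K\|\to\infty$ and as $K\to\partial\mathcal{K}$.

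\textbf{Step 2: extract a limit and locate it.} Coercivity makes $\{K_j\}$ bounded and keeps it away from marginally stabilizing policies. Hence a subsequence converges to some $K^\ast\in\mathcal{K}$. By continuity of the $\mathcal{H}_\infty$ norm on $\mathcal{K}$, $\|\mathbf{T}_\infty(K^\ast)\|_{\mathcal{H}_\infty}\le\beta$. Theorem~\ref{prop:closure} then gives $K^\ast\in\mathrm{cl}(\mathcal{K}_\beta)$.

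\textbf{Step 3: identify the cost at the limit.} If $K^\ast\in\mathcal{K}_\beta$, continuity of $J_{\mathrm{mix}}$ (Theorem~\ref{theorem:analytical-functions}) gives $J_{\mathrm{mix}}(K^\ast)=\gamma^\ast$. If $K^\ast\in\partial\mathcal{K}_\beta$, Lemma~\ref{thm:continuity-mix-cost} gives $\tilde J_{\mathrm{mix}}(K^\ast)=\gamma^\ast$. In either case the extended cost at $K^\ast$ equals $\gamma^\ast$.

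\textbf{Step 4: lift to a point of $\mathcal{F}_{\mathrm{cvx}}$.} The pair $(K^\ast,\gamma^\ast)$ lies in $\operatorname{epi}_\ge(\tilde J_{\mathrm{mix}})$. By Lemma~\ref{lem:inequalities-for-J} (equivalently Corollary~\ref{coro:epi-pi-same}) there is $X^\ast\succ0$ with $(K^\ast,\gamma^\ast,X^\ast)\in\mathcal{L}_{\mathrm{lft}}$. Applying $\Phi$ from Theorem~\ref{proposition:mixed-lifting} gives $(\gamma^\ast,X^\ast,K^\ast X^\ast)\in\mathcal{F}_{\mathrm{cvx}}$. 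Since $\gamma^\ast$ is the infimum of $\gamma$ over $\mathcal{F}_{\mathrm{cvx}}$, this triple is a minimizer, which proves part 1.

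\textbf{Step 5: part 2 for an arbitrary minimizer.} Part 2 should hold for any minimizer $(\gamma^\ast,X^\ast,Y^\ast)$, not only the one just constructed. Set $K^\ast=Y^\ast(X^\ast)^{-1}$. Then $\Phi^{-1}(\gamma^\ast,X^\ast,Y^\ast)=(K^\ast,\gamma^\ast,X^\ast)\in\mathcal{L}_{\mathrm{lft}}$. Corollary~\ref{coro:epi-pi-same} gives $K^\ast\in\mathrm{dom}\,\tilde J_{\mathrm{mix}}=\mathrm{cl}(\mathcal{K}_\beta)$ and $\tilde J_{\mathrm{mix}}(K^\ast)\le\gamma^\ast$. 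For the reverse inequality, note that $\tilde J_{\mathrm{mix}}(K^\ast)$ is itself attained in the projection, so its lift is feasible for the convex problem. Minimality of $\gamma^\ast$ then forces $\tilde J_{\mathrm{mix}}(K^\ast)\ge\gamma^\ast$, hence equality.

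\textbf{Main obstacle.} The delicate step is Step 2: ruling out a minimizing sequence that escapes to infinity or approaches the boundary of $\mathcal{K}$. I would handle it with the lower bound $J_{\mathrm{mix}}\ge J_{\mathrm{LQR}}$ and the standard coercivity of $J_{\mathrm{LQR}}$ when $W\succ0$ and $R_2\succ0$. One caveat: if $Q_2$ is only semidefinite, blow-up near $\partial\mathcal{K}$ still follows from $W\succ0$ together with detectability of $(Q_2^{1/2},A)$.
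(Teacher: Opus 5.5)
Your proof is correct and follows essentially the same route as the paper. Partial coercivity (via $J_{\mathrm{mix}}\ge J_{\mathrm{LQR}}$) and continuity of $\tilde J_{\mathrm{mix}}$ on $\mathrm{cl}(\mathcal{K}_\beta)$ give a policy in $\mathrm{cl}(\mathcal{K}_\beta)$ whose extended cost equals $\gamma^\ast$; that policy is then lifted through Corollary~\ref{coro:epi-pi-same} and mapped by $\Phi$ into $\mathcal{F}_{\mathrm{cvx}}$. You make explicit the minimizing-sequence compactness step that the paper only sketches, and your Step~5 extends part~2 to every minimizer of the convex problem rather than just the constructed one (the blow-up of $J_{\mathrm{LQR}}$ near $\partial\mathcal{K}$ is redundant, since Theorem~\ref{prop:closure} already gives $\mathrm{cl}(\mathcal{K}_\beta)\subseteq\mathcal{K}$).
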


\begin{proof}
By Theorem~\ref{thm:mix-convex-reformulation}, $\gamma^\ast=\min_{(\gamma, X, Y)\in {\mathcal{F}}_{\mathrm{cvx}}} \gamma$.
We first note that $\gamma^\ast$ cannot be attained as $\|K\|\to\infty$ since the corresponding cost diverges. 
Now consider the case where $\gamma^\ast$ is attained in $\mathcal{K}_\beta$, i.e., there exists $K^\ast\in\mathcal{K}_\beta$ with $J_{\mathrm{mix}}(K^\ast)=\gamma^\ast$. That is, $(K^\ast,\gamma^\ast)\in\operatorname{epi}_\geq(J_{\mathrm{mix}})$.
By the inclusion $\operatorname{epi}_\geq(J_{\mathrm{mix}}) \subseteq \pi_{K,\gamma}({\mathcal{L}}_{\mathrm{lft}})$ in \eqref{eq:ECL-inclusion}, there exists $X^\ast$ such that  $(K^\ast,\gamma^\ast,X^\ast)\in{\mathcal{L}}_{\mathrm{lft}}$.
Applying the diffeomorphism ${\Phi}(K^\ast,\gamma^\ast,X^\ast)=(\gamma^\ast,X^\ast, Y^\ast)\in{\mathcal{F}}_{\mathrm{cvx}}$ then establishes solvability in this case. 

Next, consider the case where $\gamma^\ast$ is not attained in $\mathcal{K}_\beta$.
By the continuity in Lemma~\ref{thm:continuity-mix-cost}, there exists a boundary policy $K^\ast\in\partial\mathcal{K}_\beta$ with $\tilde{J}_{\mathrm{mix}}(K^\ast)\!=\!\gamma^\ast$.
Crucially, the pair $(K^\ast,\gamma^\ast)$ still admits a valid lifting to $\mathcal{L}_\mathrm{lft}$.
This follows from Corollary~\ref{coro:epi-pi-same}, where $\operatorname{epi}_\geq(\tilde{J}_{\mathrm{mix}})=\pi_{K,\gamma}(\mathcal{L}_{\mathrm{lft}})$ guarantees the existence of $X^\ast$ such that $({K}^\ast,\gamma^\ast,X^\ast)\in\mathcal{L}_\mathrm{lft}$.
Applying the diffeomorphism $\Phi(K^\ast,\gamma^\ast,X^\ast)\!=\!(\gamma^\ast, X^\ast, Y^\ast)\!\in\! \mathcal{F}_{\mathrm{cvx}}$ then shows solvability.

Combining both cases and noting that $K^\ast=Y^\ast(X^\ast)^{-1}$ since $Y^\ast=K^\ast X^\ast$, we conclude that there exists $(\gamma^\ast, X^\ast, Y^\ast)\in \mathcal{F}_{\mathrm{cvx}}$ with $K^\ast\in\mathrm{cl}(\mathcal{K}_\beta)$.
\end{proof}

Proposition~\ref{thm:solvability} guarantees solvability of the convex reformulation in \eqref{eq:mix-2ch-LMI} as well as recovery of an optimal policy, even when the optimal value is not attained in $\mathcal{K}_\beta$, in which case the policy lies on the boundary $\partial\mathcal{K}_\beta$. 

\section{Numerical Experiments} \label{section:experiments}

In this section, we present numerical experiments evaluating the performance of different methods for solving the mixed $\calH_2/\calH_\infty$ control \eqref{eq:LQR-hinf} and \eqref{eq:mix-2ch-problem} in both small and large-scale cases. 

\subsection{Numerical approaches and setup}
We compare four approaches:
\begin{enumerate}
    \item Analytical solution: For the single-channel case, we solve the Riccati equation \eqref{eq:mix-1ch-optimality-conditions-b} and obtain the optimal policy from \eqref{eq:mix-1ch-optimality-conditions-a}.

    \item Policy iteration: Inspired by the optimality conditions in Corollaries \ref{thm:mixed-2ch-optimality-conditions} and \ref{thm:mixed-1ch-optimality-conditions}, we apply iterative policy updates to solve \eqref{eq:mix-2ch-problem} and its single-channel case. The details are given below. 
    
    \item LMI-based convex optimization: Solve the convex reformulation in \eqref{eq:mix-2ch-LMI} with the conic solver \method{MOSEK} \cite{aps2019mosek}.

    \item \method{HIFOO}: A sophisticated nonsmooth optimization package (v3.501 with Hanso 2.01) for fixed-order $\mathcal{H}_2/\mathcal{H}_\infty$ controller synthesis \cite{burke2006hifoo}.

\end{enumerate}

\noindent\textbf{Policy iteration.} The method starts from a feasible policy and alternates between policy evaluation (solving a Riccati equation) and policy improvement.
It can be viewed as a fixed-point iteration for solving a stationary point $\nabla J_{\mathrm{mix}}(\cdot)=0$.
In the single-channel case with $z_2=z_\infty$, using \eqref{eq:gradient-Jtilde}, the update simplifies to:
\begin{subequations} \label{eq:PI-update}
\begin{equation} \label{eq:PI-1ch}
    K'=-R^{-1}B^\tr P_K,
\end{equation}
which is equivalent to the Gauss-Newton update: $K'=K-\eta R^{-1}\nabla J_{\mathrm{mix}}(K)\Lambda_{K}^{-1}$ with $\eta=1/2$.
This update preserves feasibility, and its convergence is established in \cite{zhang2021policy}.
For the general two-channel setting, we adopt an analogous fixed-point iteration based on \eqref{eq:gradient-J}:
\begin{equation} \label{eq:PI-2ch}
    K'=-R_2^{-1}B^\tr \Gamma_{K}(I+\beta^{-2}\alpha^2X_{K}\Gamma_{K})^{-1}.
\end{equation}
\end{subequations}
While a formal convergence analysis remains open, we conjecture that the method converges for sufficiently large $\beta$, and leave this as future work.

\noindent\textbf{Setup.} All experiments are conducted in MATLAB R2024b.
The stabilizing Riccati solution is computed using MATLAB's \method{icare}.
Policy iteration is run until convergence $\|K'-K\|<10^{-5}$.
For LMI-based methods, we use \method{MOSEK} with its default high-accuracy stopping criteria.

We compare these methods in terms of 1) the runtime complexity; 2) the square root of $J_{\mathrm{mix}}$ of the converged policy; 3) the resulting $\calH_2$ and $\calH_\infty$ norms. 

\subsection{A low-dimensional example}
We begin with a low-dimensional example (instance $0$) with a $3\times 3$ policy matrix. The problem parameters are
\begin{equation*}
A=\begin{bmatrix}
    1 & 1 & 1 \\ 0 & 1 & 0 \\ 1 & 0 & 0 
\end{bmatrix},\quad B=\begin{bmatrix}
    1 & 0 & 1 \\ 0 & 1 & 1 \\ 0 & 0 & 2
\end{bmatrix},\quad B_w=5I_3.
\end{equation*}
The performance matrices for the $\calH_2$ and $\calH_\infty$ channels are
\begin{equation*}
Q_2=\begin{bmatrix}
    1 & 1 & 1 \\ 1 & 1 & 1 \\ 1 & 1 & 1
\end{bmatrix},\quad R_2=\frac{1}{4}I_3,\quad Q_\infty=R_\infty=I_3.
\end{equation*}
For the special case $z_2=z_\infty$, we set $Q\!=\!R\!=\!I_3$.
The minimal achievable robustness level (i.e., the optimal $\calH_\infty$ norm) is $\beta^\ast \approx 5.24$.
We evaluate all methods with $\beta=6$, $14$, and $18$. 
One can verify that Assumptions~\ref{assumption:1} and \ref{assumption:positive-definiteness} are satisfied.
For reference, in the single-channel case, the optimal LQR policy achieves an $\calH_\infty$ norm $\approx 9.26$ (with the optimal LQR cost $\approx 9.92$); in the two-channel case, the corresponding values are around $19.15$ and $4.65$, respectively. Therefore, our chosen $\beta$ values are small enough so that  problem \eqref{eq:LQR-hinf} cannot be solved analytically for the two-channel case.

Table~\ref{table:low-dim-example} summarizes the performance of all four approaches.
The \texttt{ARE} row shows that solving \eqref{eq:mix-1ch-optimality-conditions-b} is the most efficient and serves as the benchmark when $z_2=z_\infty$.
From the \texttt{PI} row, we observe that the single-channel update in \eqref{eq:PI-1ch} performs reliably, requiring only about $10$ times the runtime of solving \eqref{eq:mix-1ch-optimality-conditions-b}.
In the two-channel setting, the policy iteration in \eqref{eq:PI-2ch} converges for $\beta=18$, maintaining feasibility throughout and achieving the optimal policy of \eqref{eq:mix-2ch-problem}, consistent with the LMI-based solution.
However, for smaller values of $\beta$, \eqref{eq:PI-2ch} fails to converge, an observation consistent with our conjecture.
For $\beta=6$, the iterates leave the feasible set $\mathcal{K}_\beta$ and the procedure terminates (e.g., at some iteration $t$, the $\calH_\infty$ norm of $K_t$ exceeds $6$).
For $\beta=14$, the iterates remain feasible in $\mathcal{K}_\beta$ but fail to converge, instead exhibiting a periodic behavior suggesting of a limit cycle.

From the \texttt{LMI} row, we see that the global minimum of $J_{\mathrm{mix}}$ is attained by solving the convex reformulation, with the gradient $\nabla J_{\mathrm{mix}}$ of the resulting policy nearly zero.
Overall, when successful, all three methods (\method{ARE}, \method{PI}, \method{LMI}) yield almost identical solutions.
Finally, the \method{HIFOO} solver is less reliable for smaller $\beta$, often triggering warnings or failing to return feasible solutions.
Even at $\beta=18$, its performance (especially the $\calH_\infty$ norm) deviates noticeably from the other methods.
This is expected as \method{HIFOO} relies on nonsmooth local optimization and does not guarantee global optimality.
In contrast, our result in Theorem~\ref{thm:mix-stationary-global-optimality} ensures that stationary points found via gradient-based methods are globally optimal.

\begin{table}[t]  
\setlength{\belowcaptionskip}{0pt}
\renewcommand{\arraystretch}{1.1}
\begin{center}
\caption{Comparison of different methods on Instance 0. Results for the two- and single-channel cases are shown under 2-ch and 1-ch subcolumns, respectively. A dash (–) indicates unavailable results. Bold values mark the best runtime among all methods.
}
\small 
\label{table:low-dim-example}
\begin{tabular}{ll*{6}{l}}
\toprule
\multirow{2}{*}{} & & \multicolumn{2}{c}{$\beta\!=\!6$} & \multicolumn{2}{c}{$\beta\!=\!14$} & \multicolumn{2}{c}{$\beta\!=\!18$} \\
    \cmidrule(lr){3-4} \cmidrule(lr){5-6} \cmidrule(lr){7-8}
    & & $2$-ch & $1$-ch & $2$-ch & $1$-ch & $2$-ch & $1$-ch \\
\hline
\multirow{4}{*}{\method{ARE}} & \method{time} & - & $\textbf{0.01}$ & - & $\textbf{0.01}$ & - & $\textbf{0.01}$ \\
& \method{$J_\mathrm{mix}^{1/2}$} & - & $16.1$ & - & $10.4$ & - & $10.2$ \\
& \method{$\calH_2$\!} & - & $12.7$ & - & $9.9$ & - & $9.9$ \\
& \method{$\calH_\infty$\!} & - & $5.93$ & - & $8.50$ & - & $8.79$ \\ \cline{2-8} 
\multirow{4}{*}{\method{PI}} & \method{time} & - & $0.04$ & - & $0.05$ & $\textbf{0.10}$ & $0.06$ \\
& \method{$J_\mathrm{mix}^{1/2}$} & - & $16.1$ & - & $10.4$ & $4.80$ & $10.2$ \\
& \method{$\calH_2$\!} & - & $12.7$ & - & $9.95$ & $4.67$ & $9.93$ \\
& \method{$\calH_\infty$\!} & - & $5.93$ & - & $8.50$ & $15.2$ & $8.79$ \\ \cline{2-8} 
\multirow{4}{*}{\method{LMI}} & \method{time} & $0.49$ & $0.17$ & $0.49$ & $0.18$ & $0.49$ & $0.49$ \\
& \method{$J_\mathrm{mix}^{1/2}$} & $8.12$ & $16.1$ & $4.91$ & $10.4$ & $4.80$ & $10.2$ \\
& \method{$\calH_2$\!} & $6.54$ & $12.7$ & $4.72$ & $9.95$ & $4.67$ & $9.93$ \\
& \method{$\calH_\infty$\!} & $5.96$ & $5.93$ & $13.1$ & $8.50$ & $15.2$ & $8.79$ \\ \cline{2-8} 
\multirow{3}{*}{\method{hifoo}} & \method{time} & - & - & $11.6$ & $1.59$ & $2.46$ & $2.53$ \\
& \method{$\calH_2$\!} & - & - & $>\!10^4$ & $9.92$ & $4.65$ & $9.91$ \\
& \method{$\calH_\infty$\!} & - & - & $9.69$ & $10.4$ & $17.9$ & $9.26$ \\
\toprule
\end{tabular}
\end{center}
\end{table}

\subsection{Higher-dimensional examples}

We next consider three higher-dimensional examples, with policy matrices of sizes $15\times15$, $60\times60$, and $90\times90$, referred to as instances $1$, $2$, and $3$, respectively.
The problem data for the single-channel formulations are adapted from \cite[Sec. 7.3]{zhang2021policy}.
For the two-channel formulations, we retain the same $\calH_\infty$ performance signals but specify the $\calH_2$ performance with $Q_2=I$ and $R_2=\frac{1}{4} R_\infty$.

The minimum robustness level $\beta^\ast$ for instances $1$-$3$ are $\approx 0.067$, $0.098$, and $0.096$,  respectively.
Unlike the low-dimensional example (instance $0$), our focus here is on assessing the scalability of different methods.
Accordingly, we test larger robustness levels, setting $\beta=10$, $15$, and $20$. One can verify that all assumptions are satisfied.

Table~\ref{table:high-dim-examples} summarizes the results on the higher-dimensional examples.
As shown in the \texttt{ARE} row, solving the Riccati equation remains highly efficient for the single-channel case, even at larger scales.
The \texttt{PI} row demonstrates that the updates in \eqref{eq:PI-update} remain effective as the problem size increases. Notably, the two-channel update \eqref{eq:PI-2ch} converges across all instances, with the resulting policies exhibiting nearly vanishing gradients, which empirically supports our conjecture.
This behavior suggests that \eqref{eq:PI-2ch} enjoys an implicit regularization property, maintaining feasibility and achieving convergence when $\beta$ is sufficiently large.

The \texttt{LMI} row shows that the global minimum of $J_{\mathrm{mix}}$ can be achieved by solving a convex reformulation.
However, despite yielding policies with nearly zero gradients, the LMI method scales poorly: its runtimes for instance $2$ and $3$ are significantly longer than those of the analytical and policy iteration approaches.
Overall, compared with the classical LMI-based methods, these results show that policy iteration can scale more favorably to large-scale problems.

\begin{table}[t] 
\setlength{\belowcaptionskip}{0pt}
\renewcommand{\arraystretch}{1.1}
\begin{center}
\caption{Comparison of different methods on Instance 1-3 (denoted I$_{1-3}$). Results for two- and single-channel cases are shown under 2-ch and 1-ch subcolumns, respectively. A dash (-) indicates unavailable results. Bold values mark the best runtime.
}
\small 
\label{table:high-dim-examples}
\begin{tabular}{ll*{6}{l}}
\toprule
\multirow{2}{*}{} & & \multicolumn{2}{c}{I$_1$, $\beta\!=\!10$} & \multicolumn{2}{c}{I$_2$, $\beta\!=\!15$} & \multicolumn{2}{c}{I$_3$, $\beta\!=\!20$} \\
    \cmidrule(lr){3-4} \cmidrule(lr){5-6} \cmidrule(lr){7-8}
    & & $2$-ch & $1$-ch & $2$-ch & $1$-ch & $2$-CH & $1$-CH \\
\hline
\multirow{4}{*}{\method{ARE}} & \method{time} & - & \textbf{0.02} & - & \textbf{0.03} & - & \textbf{0.05} \\
& \method{$J_{\mathrm{mix}}^{1/2}$} & - & $0.47$ & - & $1.12$ & - & $1.22$ \\
& \method{$\calH_2$\!} & - & $0.47$ & - & $1.12$ & - & $1.22$ \\
& \method{$\calH_\infty$\!} & - & 0.09 & - & 0.14 & - & 0.14 \\ \cline{2-8} 
\multirow{4}{*}{\method{PI}} & \method{time} & \textbf{0.10} & $0.06$ & \textbf{0.28} & $0.09$ & \textbf{0.61} & $0.46$ \\
& \method{$J_{\mathrm{mix}}^{1/2}$} & $0.99$ & $0.47$ & $1.99$ & $1.12$ & $2.44$ & $1.22$ \\
& \method{$\calH_2$\!} & $0.99$ & $0.47$ & $1.99$ & $1.12$ & $2.44$ & $1.22$ \\
& \method{$\calH_\infty$\!} & 1.98 & 0.09 & 7.72 & 0.14 & 9.27 & 0.14 \\ \cline{2-8} 
\multirow{4}{*}{\method{LMI}} & \method{time} & $0.27$ & $0.37$ & $11.3$ & $20.1$ & $89.7$ & $143$ \\
& \method{$J_{\mathrm{mix}}^{1/2}$} & $1.00$ & $0.47$ & $1.99$ & $1.12$ & $2.44$ & $1.22$ \\
& \method{$\calH_2$\!} & $0.99$ & $0.47$ & $1.99$ & $1.12$ & $2.44$ & $1.22$ \\
& \method{$\calH_\infty$\!} & $1.98$ & $0.09$ & $7.77$ & $0.14$ & $9.27$ & $0.13$ \\ \cline{2-8} 
\multirow{3}{*}{\method{hifoo}} & \method{time} & $1.43$ & $8.57$ & $35.6$ & $27.5$ & $262$ & $221$ \\
& \method{$\calH_2$\!} & $0.99$ & $0.47$ & $1.99$ & $1.12$ & $2.44$ & $1.22$ \\
& \method{$\calH_\infty$\!} & 2.01 & 0.09 & 8.57 & 0.14 & 10.1 & 0.14 \\
\toprule
\end{tabular}
\end{center}
\end{table}

\section{Conclusions} \label{sec:conclusions}

In this paper, we examine the nonconvex optimization landscape of mixed $\calH_2/\calH_\infty$ control.
We characterize the feasible set and cost function, showing that despite nonconvexity, every stationary point is globally optimal.
Our analysis, grounded in the \ECL{} framework, further clarifies the role of strict versus non-strict Riccati inequalities in certifying global optimality and ensuring solvability of the convex reformulation. 
Several open questions are worth further investigation.
An important one is to establish convergence guarantees for policy iteration in the general two-channel case, particularly for large $\beta$.
Another is to design principled, scalable policy optimization algorithms for mixed $\calH_2/\calH_\infty$ design with provable performance guarantees. 

\vspace{5mm}

\appendix

\noindent\textbf{Appendix}

The appendix is organized into three parts: proofs for Sections~\ref{section:preliminaries}-\ref{section:opt-landscape}, proofs for Section~\ref{section:no-spurious-stationary}, and proofs for Section~\ref{section:ECL}.

\section{Proofs in Sections~\ref{section:preliminaries} and \ref{section:opt-landscape}} \label{appendix:proof-opt-landscape}

\subsection{Proof of the cost equivalence in \eqref{eq:mix-1ch-cost-2}} \label{proof:alternative-cost-single-channel}

Let $\tilde{S}_K\!:=\!Q\!+\!K^\tr R K$.
By Lemma~\ref{lem:BRL-X}, the Riccati equations 
\begin{align}
    &A_KX_K \!+\! X_KA_K^\tr \!+\! \beta^{-2}X_K\tilde{S}_K X_K \!+\! W \!=\! 0,\label{eq:mix-1ch-Riccati-XK} \\ 
    &A_K^\tr{P}_K\!+\!{P}_KA_K\!+\!\beta^{-2}{P}_KW{P}_K\!+\!\tilde{S}_K\!=\!0 \label{eq:mix-1ch-Riccati-PK-app}
\end{align}
admit unique stabilizing solutions $X_K$ and $P_K$ respectively for any $K\in{\mathcal{K}_\beta}$.
To verify the cost equivalence in \eqref{eq:mix-1ch-cost-equivalence}, define $\tilde{X}_K\!:=\!-\beta^{-2}X_K$. Substituting into \eqref{eq:mix-1ch-Riccati-XK} gives
\begin{equation} \label{eq:mix-1ch-modify-Riccati-XK}
    A_K\tilde{X}_K + \tilde{X}_KA_K^\tr - \tilde{X}_K\tilde{S}_K \tilde{X}_K - \beta^{-2}W = 0,
\end{equation}
where the associated closed-loop matrix is
$A_K-\tilde{X}_K\tilde{S}_K=A_K+\beta^{-2}X_K\tilde{S}_K$,
which is Hurwitz by assumption. Thus, $\tilde{X}_K$ is the stabilizing solution to \eqref{eq:mix-1ch-modify-Riccati-XK}. Now consider the Hamiltonian matrices associated with \eqref{eq:mix-1ch-Riccati-PK-app} and \eqref{eq:mix-1ch-modify-Riccati-XK}:
\begin{align*}
    H(K)=\begin{bmatrix}
    A_K & \beta^{-2}W \\ -\tilde{S}_K & -A_K^\tr
    \end{bmatrix},\
    \tilde{H}(K)=\begin{bmatrix}
    A_K^\tr & -\tilde{S}_K \\ \beta^{-2}W & -A_K
    \end{bmatrix}.
\end{align*}
Since the two Hamiltonians have identical eigenvalues (as $H(K)\!=\!\tilde{H}(K)^\tr$), and their associated closed-loop matrices $A_K+\beta^{-2}WP_K$ and $A_K-\tilde{X}_K\tilde{S}_K$ are both Hurwitz, they must share the same stable eigenvalues.
Therefore, we have
    \[
    \operatorname{tr}(A_K+\beta^{-2}WP_K)=\operatorname{tr}(A_K+\beta^{-2}X_K\tilde{S}_K)
    \]
which implies $\operatorname{tr}(WP_K)=\operatorname{tr}(X_K\tilde{S}_K)$.

\subsection{Proof of Theorem~\ref{prop:closure}} \label{appendix:proof-boundary}

We prove the set inclusions in both directions. 
The first is stated below, whose proof relies critically on the global optimality property of state-feedback $\calH_\infty$ cost function.

\begin{lemma} \label{lem:subset-of-closure}
    Under Assumptions~\ref{assumption:1} and \ref{assumption:positive-definiteness}, we have 
    \[
    \left\{K\in\mathcal{K} \mid \|\mathbf{T}_{\infty}(K)\|_{\mathcal{H}_\infty}\leq\beta\right\}\subseteq \mathrm{cl}\left(\mathcal{K}_\beta\right).
    \]
\end{lemma}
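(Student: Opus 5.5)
Given $K_0\in\mathcal{K}$ with $\|\mathbf{T}_\infty(K_0)\|_{\mathcal{H}_\infty}\le\beta$, I will build a sequence in $\mathcal{K}_\beta$ converging to $K_0$. If the inequality is strict there is nothing to prove, so assume $\|\mathbf{T}_\infty(K_0)\|_{\mathcal{H}_\infty}=\beta$.

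The natural route, as hinted before the lemma, goes through the lifted convex description of the sublevel sets of the state-feedback $\mathcal{H}_\infty$ cost. By Assumption~\ref{assumption:1} there is $K_1\in\mathcal{K}$ with $\|\mathbf{T}_\infty(K_1)\|_{\mathcal{H}_\infty}<\beta$. The plan is to connect $K_0$ to $K_1$ by a path along which the $\mathcal{H}_\infty$ norm is \emph{strictly} below $\beta$ except at the endpoint $K_0$. Convexity in the lifted variables suggests doing this with convex combinations.

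Step 1 (lift $K_1$). By Lemma~\ref{lem:BRL-X}, pick $X_1\succ0$ with the strict inequality~\eqref{eq:BRL-Riccati-inequality-feedback-X-main} for $K_1$. Set $Y_1=K_1X_1$, so that $\mathbb{F}_{\mathrm{cvx}}(X_1,Y_1)\prec0$.

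Step 2 (lift $K_0$ non-strictly). This is the main obstacle. I need $X_0\succ0$ with
\[
A_{K_0}X_0+X_0A_{K_0}^\tr+\beta^{-2}X_0S_{K_0}X_0+W\preceq0 .
\]
I would obtain $X_0$ as a limit. For $\epsilon>0$, $\|\mathbf{T}_\infty(K_0)\|_{\mathcal{H}_\infty}<\beta+\epsilon$, so Lemma~\ref{lem:BRL-X} gives the stabilizing solution $X_\epsilon\succ0$ of the Riccati equation at level $\beta+\epsilon$. These solutions are monotone in $\epsilon$ (they increase as $\epsilon\downarrow0$), by the minimality/comparison property of stabilizing solutions cited after Lemma~\ref{lem:BRL-X}. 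They are bounded above because the limiting equation at level $\beta$ still has a (minimal, non-stabilizing) solution; this is the existence fact quoted before~\eqref{eq:cost-value-boundary}, and it dominates each $X_\epsilon$ by the comparison theorem. Hence $X_\epsilon\to X_0$, and $X_0\succeq X_\epsilon\succ0$ solves the equation at level $\beta$, in particular the non-strict inequality. Equivalently, $\mathbb{F}_{\mathrm{cvx}}(X_0,Y_0)\preceq0$ with $Y_0=K_0X_0$. If the comparison argument is delicate, I would fall back on the non-strict KYP lemma for stable systems with $\|\mathbf{T}_\infty\|_{\mathcal{H}_\infty}\le\beta$, using $W\succ0$ for positive definiteness.

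Step 3 (convex path). For $t\in(0,1]$ set $X_t=(1-t)X_0+tX_1\succ0$ and $Y_t=(1-t)Y_0+tY_1$. Since $\mathbb{F}_{\mathrm{cvx}}$ is affine in $(X,Y)$,
\[
\mathbb{F}_{\mathrm{cvx}}(X_t,Y_t)=(1-t)\,\mathbb{F}_{\mathrm{cvx}}(X_0,Y_0)+t\,\mathbb{F}_{\mathrm{cvx}}(X_1,Y_1)\prec0 .
\]
By the Schur complement equivalence in the proof of Lemma~\ref{lemma:path-connectedness}, this gives $K_t:=Y_tX_t^{-1}\in\mathcal{K}_\beta$ for every $t\in(0,1]$. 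Finally, $K_t\to Y_0X_0^{-1}=K_0$ as $t\to0$ by continuity of inversion, so $K_0\in\mathrm{cl}(\mathcal{K}_\beta)$.

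The whole difficulty sits in Step 2: producing a positive definite solution of the non-strict Riccati inequality exactly at the saturated level $\beta$, where Lemma~\ref{lem:BRL-X} no longer applies. This is what rules out the pathology illustrated by $f(x)=x^2(x+1)$ in the discussion after Theorem~\ref{prop:closure}.
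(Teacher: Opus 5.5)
Your proof is correct, but it takes a different route from the paper. The paper argues by contradiction. If a saturating $K$ (with $\|\mathbf{T}_\infty(K)\|_{\mathcal{H}_\infty}=\beta$) had a neighborhood disjoint from $\mathcal{K}_\beta$, it would be a local minimizer of $K\mapsto\|\mathbf{T}_\infty(K)\|_{\mathcal{H}_\infty}$. By the external result that every local minimizer of the state-feedback $\mathcal{H}_\infty$ norm is global \cite[Corollary 4.1]{zheng2024benign}, its value would then equal $\beta^\ast<\beta$, a contradiction.

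You instead give a direct, constructive argument. You anchor a strictly feasible lift $(X_1,Y_1)$, which is exactly where $\beta>\beta^\ast$ enters. You then lift $K_0$ non-strictly and use the affinity of $\mathbb{F}_{\mathrm{cvx}}$ to produce an explicit curve $t\mapsto Y_tX_t^{-1}$ that lies in $\mathcal{K}_\beta$ for $t>0$ and converges to $K_0$. This is essentially the convex-lifting mechanism behind the cited no-spurious-minima theorem, unpacked for the special case at hand.

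What each approach buys:
\begin{itemize}
\item Your version is self-contained relative to tools the paper already uses: the Schur-complement LMI from Lemma~\ref{lemma:path-connectedness}, and the existence of a Riccati solution at the saturated level. The paper itself relies on that existence fact to define $\tilde{J}_{\mathrm{mix}}$ and in Lemma~\ref{lem:inequalities-for-J}. The cost is that you must invoke this non-strict bounded-real fact explicitly.
\item The paper's version is shorter and never touches Riccati theory at the boundary, but it imports a heavier landscape theorem whose own proof rests on the same non-strict lifting.
\end{itemize}

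One simplification: your monotone-limit construction in Step 2 is redundant. The solution at level $\beta$ that you invoke to bound $X_\epsilon$ from above can itself serve as $X_0$. Its positive definiteness follows directly from $A_{K_0}$ being Hurwitz together with $A_{K_0}X_0+X_0A_{K_0}^\tr=-W-\beta^{-2}X_0S_{K_0}X_0\preceq -W\prec 0$.
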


\begin{proof}
    We claim that for any $K \in \mathcal{K}$ with $\|\mathbf{T}_{\infty}(K)\|_{\mathcal{H}_\infty} \le \beta$, there exists a sequence $\{K_j\} \subset \mathcal{K}_\beta$ such that $K_j \to K$, and hence $K \in \operatorname{cl}(\mathcal{K}_\beta)$.
    
    Suppose for contradiction that this is false.
    Then there exists a neighborhood $U\!\subset\!\mathcal{K}$ of $K$ such that $U\cap \mathcal{K}_\beta=\emptyset$, implying that $\|\mathbf{T}_{\infty}(K')\|_{\mathcal{H}_\infty} \geq \beta$ for all $K'\in U$.
    Thus, $K$ (with $\|\mathbf{T}_{\infty}(K)\|_{\mathcal{H}_\infty} \le \beta$) is a local minimizer of $\|\mathbf{T}_{\infty}(\cdot)\|_{\mathcal{H}_\infty}$ over $U$.
    It is clear by continuity that $\|\mathbf{T}_{\infty}(K)\|_{\mathcal{H}_\infty} = \beta$.

    However, by \cite[Corollary 4.1]{zheng2024benign}, every local minimizer of $\|\mathbf{T}_{\infty}(\cdot)\|_{\mathcal{H}_\infty}$ is a global one that attains the infimum value $\beta^\ast\!:=\!\inf_{K\in\mathcal{K}}\|\mathbf{T}_{\infty}(K)\|_{\calH_\infty}$. Therefore, we must have $\|\mathbf{T}_{\infty}(K)\|_{\mathcal{H}_\infty} \!=\! \beta^\ast$, which is a contradiction since $\beta\neq \beta^\ast$.
    Hence, such a neighborhood $U$ cannot exist.
\end{proof}

Now we state the second auxiliary lemma.
The key idea is to exclude marginally stabilizing policies by leveraging the fact that the $\mathcal{H}_\infty$ cost diverges as $K$ approaches a marginally stabilizing policy.
\begin{lemma}\label{lemma:Hinf_bigger_than_beta}
    Under Assumptions~\ref{assumption:1} and \ref{assumption:positive-definiteness}, we have 
    \[
    \mathrm{cl}\left(\mathcal{K}_\beta\right) \subseteq\left\{K\in\mathcal{K} \mid \|\mathbf{T}_{\infty}(K)\|_{\mathcal{H}_\infty}\leq\beta\right\}.
    \]
\end{lemma}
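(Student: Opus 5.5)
We take an arbitrary $K\in\mathrm{cl}(\mathcal{K}_\beta)$ and a sequence $\{K_j\}\subset\mathcal{K}_\beta$ with $K_j\to K$. We must show two things: (i) $K\in\mathcal{K}$, i.e., $A_K$ is Hurwitz; and (ii) $\|\mathbf{T}_\infty(K)\|_{\mathcal{H}_\infty}\le\beta$. Once (i) holds, (ii) is easy. On $\mathcal{K}$, the map $K'\mapsto\|\mathbf{T}_\infty(K')\|_{\mathcal{H}_\infty}$ is continuous, because the $\mathcal{H}_\infty$ norm of a stable transfer function depends continuously on state-space data with a Hurwitz $A$-matrix. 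Passing to the limit in $\|\mathbf{T}_\infty(K_j)\|_{\mathcal{H}_\infty}<\beta$ then gives the non-strict bound. The real content is therefore (i): limits of policies in $\mathcal{K}_\beta$ cannot be marginally stabilizing.

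For (i), we first note that $K\in\mathrm{cl}(\mathcal{K})$, so every eigenvalue of $A_K$ lies in the closed left half-plane. Suppose, for contradiction, that $A_K$ has an eigenvalue $i\omega_0$ on the imaginary axis. We show that $\|\mathbf{T}_\infty(K_j)\|_{\mathcal{H}_\infty}\to\infty$, contradicting the uniform bound $\beta$. The cleanest route avoids frequency-domain estimates and uses the Riccati characterization directly.

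By Lemma~\ref{lem:BRL-X}, each $K_j$ admits $X_j\succ 0$ with
\[
A_{K_j}X_j+X_jA_{K_j}^\tr+\beta^{-2}X_jS_{K_j}X_j+W\prec 0 .
\]
Let $v\neq 0$ be a left eigenvector with $v^*A_K=i\omega_0v^*$, and choose left eigenvectors $v_j$ of $A_{K_j}$ with $v_j\to v$ and eigenvalues $\lambda_j\to i\omega_0$, so that $\mathrm{Re}\,\lambda_j\to0^-$. Multiplying the inequality by $v_j^*$ on the left and $v_j$ on the right gives
\[
2\,\mathrm{Re}(\lambda_j)\,v_j^*X_jv_j+\beta^{-2}v_j^*X_jS_{K_j}X_jv_j+v_j^*Wv_j<0 .
\]
Since $S_{K_j}\succeq Q_\infty\succeq q I$ with $q>0$, we have $v_j^*X_jS_{K_j}X_jv_j\ge q\|X_jv_j\|^2\ge q\,(v_j^*X_jv_j)^2/\|v_j\|^2$. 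Writing $t_j:=v_j^*X_jv_j\ge0$ and $c_j:=v_j^*Wv_j\to v^*Wv>0$ (using $W\succ0$), we obtain
\[
\beta^{-2}q\,t_j^2/\|v_j\|^2-2|\mathrm{Re}\,\lambda_j|\,t_j+c_j<0 .
\]
For this quadratic in $t_j$ to take a negative value, its discriminant must be positive: $|\mathrm{Re}\,\lambda_j|^2>\beta^{-2}q\,c_j/\|v_j\|^2$. The right side is bounded below by a positive constant, while the left side tends to $0$. This is the desired contradiction, so $A_K$ is Hurwitz.

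The main obstacle is the eigenvector continuity step. Eigenvalues depend continuously on the matrix, and for each $j$ we may pick a unit left eigenvector $v_j$ of $A_{K_j}$ whose eigenvalue converges to $i\omega_0$. By compactness, a subsequence of $v_j$ converges to some unit $v$, which is automatically a left eigenvector of $A_K$ for $i\omega_0$. Convergence to a prescribed eigenvector is not needed, so passing to this subsequence handles the step. With (i) established, the continuity argument in the first paragraph finishes the proof. Combined with Lemma~\ref{lem:subset-of-closure}, this yields Theorem~\ref{prop:closure}.
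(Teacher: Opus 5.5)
Your argument is correct, but the key step (i) takes a genuinely different route from the paper.

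The paper argues in the frequency domain. Since $W\succ0$ and $Q_\infty\succ0$, the realization $\bigl(A_K, B_w, \bigl[\begin{smallmatrix} Q_\infty^{1/2}\\ R_\infty^{1/2}K\end{smallmatrix}\bigr]\bigr)$ is minimal. A marginally stable $A_K$ therefore produces a pole of $\mathbf{T}_\infty(K)$ on the imaginary axis. An external result, \cite[Lemma D.1]{zheng2024benign}, then forces $\|\mathbf{T}_\infty(K_j)\|_{\calH_\infty}\to\infty$, contradicting the bound $\beta$.

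You instead stay entirely in the bounded-real/Riccati picture, testing the strict Riccati inequality against left eigenvectors. This makes the proof self-contained and in fact quantitative. Your discriminant bound holds for every eigenvalue of $A_{K'}$, with any unit left eigenvector, and for every $K'\in\mathcal{K}_\beta$. So it shows that the spectral abscissa of $A_{K'}$ is at most $-\beta^{-1}\sqrt{\lambda_{\min}(Q_\infty)\lambda_{\min}(W)}$ uniformly on $\mathcal{K}_\beta$, and continuity of eigenvalues carries this margin to $\mathrm{cl}(\mathcal{K}_\beta)$.

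In particular, the ``main obstacle'' you flag is not one. You need neither convergence of the $v_j$ nor a subsequence, since $\|v_j\|=1$ and $v_j^*Wv_j\ge\lambda_{\min}(W)$ already give a $j$-independent lower bound.

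The paper's route, by contrast, does not use the Riccati structure at all in this step. It applies to any sequence of stable realizations with bounded $\calH_\infty$ norm whose limit is minimal with an imaginary-axis eigenvalue.

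Your ordering is also slightly tidier than the paper's: you first establish $K\in\mathcal{K}$, then pass to the limit using continuity of the $\calH_\infty$ norm on $\mathcal{K}$. The paper's first continuity step tacitly presupposes that $\|\mathbf{T}_\infty(K)\|_{\calH_\infty}$ is finite and continuous at $K$ before stability of $A_K$ has been shown.
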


\begin{proof}
    We first prove that any $K\in \operatorname{cl}(\mathcal{K}_\beta)$ must satisfy $\|\mathbf{T}_{\infty}(K)\|_{\mathcal{H}_\infty} \leq\beta$. 
    Suppose for contradiction that there exists $K\in \operatorname{cl}(\mathcal{K}_\beta)$ with 
    $\|\mathbf{T}_{\infty}(K)\|_{\mathcal{H}_\infty} \!=\! \beta + \epsilon$ for some $\epsilon>0$.
    Then there exists a sequence $\{K_j\}\subset \mathcal{K}_\beta$ such that $K_j\to K$.
    By the continuity of $\|\mathbf{T}_{\infty}(\cdot)\|_{\mathcal{H}_\infty}$,
    there exists $N$ such that for all $j\!>\!N$, $\|\mathbf{T}_{\infty}(K_j)\|_{\mathcal{H}_\infty}\in [\beta,\beta+\epsilon)$, which contradicts $\{K_j\}\subset \mathcal{K}_\beta$.

    We next show that $\operatorname{cl}\left(\mathcal{K}_\beta\right)\subset \mathcal{K}$.
    It suffices to show that $\operatorname{cl}(\mathcal{K}_\beta) \cap  \partial\mathcal{K}=\emptyset$, where 
    $\partial\mathcal{K}$ contains all policies $K$ such that $A_K$ is marginally stable.
    The proof is by contradiction. Suppose there exists some 
    $K\in \operatorname{cl}(\mathcal{K}_\beta)\cap \partial\mathcal{K}$.
    Then there exists $\{K_j\}\subset \mathcal{K}_\beta$ with $K_j\to K$.
    Consider the sequence of transfer functions
    \[
    \mathbf{T}_\infty(K_j)
    \;=\;
    \begin{bmatrix} Q_\infty^{1/2} \\ R_\infty^{1/2}K_j \end{bmatrix}
    \,(sI-A_{K_j})^{-1} B_w ,
    \]
    where $A_{K_j}$ is stable and $\| \mathbf{T}_\infty(K_j)\|_{\mathcal{H}_\infty}< \beta$ since $K_j\in\mathcal{K}_\beta$.
    Because $K_j\to K\in\partial\mathcal{K}$, we have
    \[
    \lim_{j\to\infty}\left(A_{K_j},B_w, \begin{bmatrix} Q_\infty^{1/2} \\ R_\infty^{1/2}K_j \end{bmatrix}\right)=
    \left(A_{K},B_w,\begin{bmatrix} Q_\infty^{1/2} \\ R_\infty^{1/2}K \end{bmatrix}\right)
    \]
    where $A_K$ has at least one eigenvalue on the imaginary axis.
    Since $\left(A_{K},B_w,\begin{bmatrix} Q_\infty^{1/2} \\ R_\infty^{1/2}K \end{bmatrix}\right)$ is minimal, $\mathbf{T}_\infty(K)$ has at least one pole that lies on the imaginary axis.
    By \cite[Lemma D.1]{zheng2024benign}, it follows that
    \[
    \lim_{j\to\infty}\, \|\mathbf{T}_\infty(K_j)\|_{\mathcal{H}_\infty} = +\infty,
    \]
    which contradicts $\|\mathbf{T}_\infty(K_j)\|_{\mathcal{H}_\infty}< \beta$ for all $j$.
    Hence $\operatorname{cl}(\mathcal{K}_\beta)\cap \partial\mathcal{K}=\emptyset$, and therefore
    $\operatorname{cl}(\mathcal{K}_\beta)\subset \mathcal{K}$.
\end{proof}

Combining Lemmas~\ref{lem:subset-of-closure} and \ref{lemma:Hinf_bigger_than_beta} leads to the desired result.

\subsection{Proof of Lemma~\ref{thm:continuity-mix-cost}} \label{appendix:proof-continuity}

We aim to show that for any $K_0\in\partial \mathcal{K}_\beta$
\[
    \lim_{K\to K_0,K\in\mathcal{K}_\beta}J_{\mathrm{mix}}(K)=\tilde{J}_{\mathrm{mix}}(K_0),
\]
i.e., the extended cost function $\tilde{J}_{\mathrm{mix}}$ is continuous on the boundary $\partial \mathcal{K}_\beta$.
To this end, it suffices to establish 
\begin{equation} \label{eq:minimal-continuity}
    \lim_{K\to{K}_0,K\in\mathcal{K}_\beta}\|X_K-X_{K_0}\|=0.
\end{equation}
That is, the minimal solution $X_K$ to the Riccati equation \eqref{eq:mix-2ch-Riccati-XK} depends continuously on $K\in\mathrm{cl}(\mathcal{K}_\beta)$.

The proof relies on the following nontrivial theorem that establishes the continuous dependence of minimal solutions on the coefficients of Riccati equations.

\begin{theorem}[{\cite[Theorem 11.2.1]{lancaster1995algebraic}}] \label{thm:continuity-min-solutions}
    The minimal solution $X$ to the Riccati equation $AX + XA^\tr + XD X + C =0$
    is a continuous function of the coefficient $(A, C, D)$ on the set of all $(A,C,D)$ such that $D\succeq0$, $C=C^\tr$ and $(A,D)$ is stabilizable.
\end{theorem}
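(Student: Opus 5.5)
We would first recast the statement in the more familiar form of maximal solutions. Setting $Y:=-X$, the equation $AX+XA^\tr+XDX+C=0$ becomes
\[
\mathcal{R}(Y):=AY+YA^\tr-YDY-C=0,
\]
and the minimal solution $X$ corresponds to the maximal solution $Y_+=-X$. With $D\succeq0$ and $(A,D)$ stabilizable we would use the standard facts about such equations. First, $Y_+$ dominates every symmetric $Z$ satisfying $\mathcal{R}(Z)\succeq0$. Second, $Y_+$ is characterized as the unique solution for which $A-Y_+D$ has all eigenvalues in the closed left half-plane. (We apply the transposed convention of $(A,D)$ consistently, matching the closed-loop matrix $A+XD$ used above.) The claim is then sequential continuity. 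Take coefficients $(A_k,C_k,D_k)\to(A,C,D)$ in the admissible set, with maximal solutions $Y_k$, and show $Y_k\to Y_+$.

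\textbf{Step 1 (upper bound).} Since $(A,D)$ is stabilizable, fix $F$ with $A-FD^{1/2}$ Hurwitz; this uses a factorization $D=D^{1/2}D^{1/2}$, and $D_k^{1/2}\to D^{1/2}$. For large $k$ the matrix $A_k-FD_k^{1/2}$ is also Hurwitz. A completion-of-squares argument in the spirit of Kleinman then bounds $Y_k$ above by the solution of a Lyapunov equation in $A_k-FD_k^{1/2}$ with right-hand side built from $C_k$ and $F$. That solution converges, so $\{Y_k\}$ is bounded above.

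\textbf{Step 2 (lower bound).} We need $\{Y_k\}$ bounded below, which amounts to bounding $X_k$ above. The first idea is to use maximality: $Y_k\succeq Z$ for any $Z$ with $\mathcal{R}_k(Z)\succeq0$. The difficulty is finding such a $Z$ uniformly in $k$, since the limiting equation may only be solvable with equality. Some perturbation is therefore needed. If this fails, the fallback is the Hamiltonian description. Here $Y_+$ is the graph of the Lagrangian invariant subspace of $H=\left[\begin{smallmatrix}A^\tr & -D\\ -C & -A\end{smallmatrix}\right]$ associated with the closed left half-plane spectrum, with pure-imaginary eigenvalues split by the sign characteristic. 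One would then show, via Lagrangian-ness and stabilizability, that limits of these graph subspaces remain graphs, i.e. do not become vertical. Either way this yields boundedness.

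\textbf{Step 3 (identification of the limit).} By boundedness, pass to a convergent subsequence $Y_{k_j}\to Y^\ast$. Continuity of $\mathcal{R}$ gives $\mathcal{R}(Y^\ast)=0$. Continuity of eigenvalues shows that $\sigma(A-Y^\ast D)$ lies in the closed left half-plane, because each $\sigma(A_k-Y_kD_k)$ does. By the uniqueness characterization, $Y^\ast=Y_+$. Since every subsequence has a further subsequence converging to $Y_+$, the whole sequence converges, which proves continuity. The uniqueness fact can itself be verified directly. For two such solutions $Y_1,Y_2$, the difference $\Delta$ satisfies a Sylvester-type equation $(A-Y_1D)\Delta+\Delta(A-Y_2D)^\tr=0$. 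Since both closed-loop matrices have spectra in the closed left half-plane, this forces $\Delta=0$ except on the imaginary axis, and there one combines it with $\Delta\succeq0$ from maximality of each.

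We expect \textbf{Step 2}, the uniform lower bound, to be the main obstacle. This is exactly the case arising in our application, where $\|\mathbf{T}_\infty(K_0)\|_{\calH_\infty}=\beta$ and eigenvalues of the closed loop approach the imaginary axis, so no strict inequality is available at the limit.
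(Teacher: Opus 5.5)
The paper gives no proof of this statement; it quotes Lancaster--Rodman. So your argument has to stand on its own, and as written it has a genuine gap at Step~2.

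Steps~1 and~3 are sound. The completion of squares does give $Y\preceq Z_F$ for every solution, where $(A-FD^{1/2})Z_F+Z_F(A-FD^{1/2})^\tr+FF^\tr-C=0$. The subsequence-plus-uniqueness finish is also the right one.

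The uniform lower bound on $Y_k$, however, is the entire content of the theorem. At a point where $Y_+$ is stabilizing, continuity already follows from the implicit function theorem. What the citation supplies is the non-stabilizing case, and that is exactly the boundary case the paper needs in Lemma~\ref{thm:continuity-mix-cost}. There your first route has no slack. If some $Z$ had $\mathcal{R}(Z)\succ0$, then $\Delta=Y_+-Z\succeq0$ would satisfy $(A-Y_+D)\Delta+\Delta(A-Y_+D)^\tr=-\mathcal{R}(Z)-\Delta D\Delta\prec0$, which rules out imaginary-axis eigenvalues of $A-Y_+D$. So at a non-stabilizing limit nothing absorbs the $O(\|(A_k,C_k,D_k)-(A,C,D)\|)$ perturbation, and ``some perturbation is needed'' is precisely the missing argument. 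Your Hamiltonian fallback is the right idea. But ``one would then show that limits of these graph subspaces remain graphs'' states the missing lemma rather than proving it.

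Here is that lemma. With your $\mathcal{R}$, the Hamiltonian must be $H=\left[\begin{smallmatrix}A^\tr & -D\\ C & -A\end{smallmatrix}\right]$, with lower-left block $+C$ rather than $-C$. Then $\operatorname{Im}\left[\begin{smallmatrix}I\\ Y\end{smallmatrix}\right]$ is $H$-invariant iff $\mathcal{R}(Y)=0$, and the restriction is $(A-YD)^\tr$.

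Claim: suppose $\left[\begin{smallmatrix}U\\ V\end{smallmatrix}\right]$ has full column rank, $U^\tr V=V^\tr U$, $H\left[\begin{smallmatrix}U\\ V\end{smallmatrix}\right]=\left[\begin{smallmatrix}U\\ V\end{smallmatrix}\right]\Lambda$, and $\sigma(\Lambda)$ lies in the closed left half-plane. Then $U$ is invertible.

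Proof of the claim:
\begin{itemize}
\item For $x\in\ker U$ the first block row gives $-DVx=U\Lambda x$. Multiplying by $x^*V^\tr$ and using $V^\tr U=U^\tr V$ yields $x^*V^\tr DVx=-(Ux)^*V\Lambda x=0$, hence $DVx=0$ and $U\Lambda x=0$.
\item So $\ker U$ is $\Lambda$-invariant. If it is nonzero, it contains some $x$ with $\Lambda x=\lambda x$ and $\operatorname{Re}\lambda\le 0$.
\item The second block row then gives $A(Vx)=-\lambda Vx$ with $Vx\neq0$ and $D(Vx)=0$. Thus $-\lambda$, which lies in the closed right half-plane, is an eigenvalue of $A-GD$ for every $G$. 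This contradicts stabilizability.
\end{itemize}

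To use the claim, take $W_k=\left[\begin{smallmatrix}I\\ Y_k\end{smallmatrix}\right](I+Y_k^2)^{-1/2}$, which has orthonormal columns, and $\Lambda_k=W_k^\tr H_kW_k$, which is similar to $(A_k-Y_kD_k)^\tr$. Along a subsequence $W_k\to W$ and $\Lambda_k\to\Lambda$. Full rank, invariance, the Lagrangian identity and the closed-left-half-plane spectrum all pass to the limit. Hence $U$ is invertible and $Y_k=V_kU_k^{-1}\to VU^{-1}$. This gives boundedness and the limit in one stroke, makes Step~1 unnecessary, and needs no sign-characteristic bookkeeping.

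Two smaller repairs.

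First, your uniqueness sketch is circular. Only $Y_+$ is known to be maximal, so ``maximality of each'' is not available. Argue instead as follows:
\begin{itemize}
\item Set $\Delta=Y_+-Y\succeq0$ and $A_Y=A-YD$. Then $A_Y\Delta+\Delta A_Y^\tr=\Delta D\Delta$, so $\operatorname{Im}\Delta$ is $A_Y$-invariant.
\item Write $\Delta=P\Delta_1P^\tr$ with $P$ orthonormal and $\Delta_1\succ0$. The compression $A_{11}=P^\tr A_YP$ satisfies $A_{11}\Delta_1+\Delta_1A_{11}^\tr=\Delta_1P^\tr DP\Delta_1\succeq0$.
\item The spectrum of $A_{11}$ therefore lies in both closed half-planes, so it is imaginary and $\operatorname{tr}A_{11}=0$.
\item Congruence by $\Delta_1^{-1/2}$ shows that $\Delta_1^{1/2}P^\tr DP\Delta_1^{1/2}\succeq0$ has trace $2\operatorname{tr}A_{11}=0$. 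Hence this matrix vanishes, and so $DP=0$.
\item An eigenvector of $A_Y$ in $\operatorname{Im}\Delta$ is then an imaginary-axis eigenvector of $A$ annihilated by $D$, again contradicting stabilizability.
\end{itemize}

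Second, the admissible set must include solvability of the Riccati equation, which the statement as printed omits. For example, $n=1$, $A=0$, $D=1$, $C=1$ has no real solution. You use this tacitly when you speak of the maximal solutions $Y_k$.
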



    To show \eqref{eq:minimal-continuity}, we apply Theorem~\ref{thm:continuity-min-solutions} to the Riccati equation \eqref{eq:mix-2ch-Riccati-XK} corresponding to any $K\in\mathrm{cl}(\mathcal{K}_\beta)$.
    It is not difficult to check that the coefficient matrices $A_K$, $W$, $\beta^{-2}S_K$ depend continuously on $K$ and satisfy all the conditions required by Theorem~\ref{thm:continuity-min-solutions}.
    Consequently, the minimal solution $X_K$ depends continuously on $K\in\mathrm{cl}(\mathcal{K}_\beta$), proving \eqref{eq:minimal-continuity}.

\subsection{Proof of Theorem~\ref{theorem:analytical-functions}} \label{appendix:proof-analyticity}

To prove that $J_{\mathrm{mix}}$ is real analytic on $\mathcal{K}_\beta$, it suffices to show that the stabilizing solution $X_K$ to \eqref{eq:mix-2ch-Riccati-XK} depends real analytically on $K\in\mathcal{K}_\beta$. This will be established using the following version of the implicit function theorem.

\begin{theorem}{\cite[Theorem 2.3.5]{krantz2002implicit}}
\label{theorem:implicit_function_theorem}
    Let $F: \mathbb{R}^k \times \mathbb{R}^m \rightarrow \mathbb{R}^m$ be a mapping such that each of its components, denoted by $F_j$ for $j=1, \ldots, m$, is a real analytic function. Suppose $\left(x_0, y_0\right) \in \mathbb{R}^k \times \mathbb{R}^m$ is a solution to the equation $F(x, y)=0$. Furthermore, suppose the Jacobian matrix
\begin{equation*}
\mathbf{J}_y F(x, y)=\left[\begin{array}{ccc}
\frac{\partial F_1}{\partial y_1} & \cdots & \frac{\partial F_1}{\partial y_m} \\
\vdots & \ddots & \vdots \\
\frac{\partial F_m}{\partial y_1} & \cdots & \frac{\partial F_m}{\partial y_m}
\end{array}\right]
\end{equation*}
is invertible at $(x, y)=\left(x_0, y_0\right)$. Then there exists a neighborhood $U \subseteq \mathbb{R}^k$ of $x_0$ and a mapping $\phi: U \rightarrow \mathbb{R}^m$ such that

\begin{enumerate}
    \item each component of $\phi$ is a real analytic function, and
    \item $F(x, \phi(x))=0$ for all $x \in U$.
\end{enumerate}

\end{theorem}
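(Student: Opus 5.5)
We plan to prove the analytic implicit function theorem by \emph{complexification}. The idea is to reduce it to the ordinary $C^1$ implicit function theorem, applied in complex variables, and then use the fact that holomorphic functions are locally given by convergent power series. The alternative would be Cauchy's method of majorants, estimating the Taylor coefficients of $\phi$ directly. That route is more computational, so we would fall back on it only if the complexification step caused trouble.

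\textbf{Step 1: complexify $F$ and solve there.} Since each $F_j$ is real analytic, it is given near $(x_0,y_0)$ by a power series with real coefficients, converging on some polydisc. The same series, with complex arguments, converges on a complex polydisc $\Delta\subset\mathbb{C}^k\times\mathbb{C}^m$ around $(x_0,y_0)$ and defines a holomorphic map $F^{\mathbb{C}}$ that agrees with $F$ on real points.
\begin{itemize}
\item At $(x_0,y_0)$ the complex Jacobian $\mathbf{J}_yF^{\mathbb{C}}$ equals the real matrix $\mathbf{J}_yF(x_0,y_0)$, so it is invertible.
\item Viewing $\mathbb{C}^{k+m}$ as $\mathbb{R}^{2(k+m)}$, the real Jacobian of $F^{\mathbb{C}}$ in $y$ has determinant $|\det \mathbf{J}_yF^{\mathbb{C}}|^2\neq 0$.
\end{itemize}
The classical $C^1$ implicit function theorem therefore gives neighborhoods $V\ni x_0$ in $\mathbb{C}^k$ and $W\ni y_0$ in $\mathbb{C}^m$, and a $C^1$ map $\phi^{\mathbb{C}}:V\to W$, such that $F^{\mathbb{C}}(z,y)=0$ with $(z,y)\in V\times W$ holds if and only if $y=\phi^{\mathbb{C}}(z)$.

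\textbf{Step 2: $\phi^{\mathbb{C}}$ is holomorphic.} Differentiating $F^{\mathbb{C}}(z,\phi^{\mathbb{C}}(z))=0$ gives
$$D\phi^{\mathbb{C}}=-(\mathbf{J}_yF^{\mathbb{C}})^{-1}\mathbf{J}_xF^{\mathbb{C}}.$$
The right-hand side is complex-linear, so $\phi^{\mathbb{C}}$ satisfies the Cauchy--Riemann equations and is holomorphic on $V$. By the Cauchy integral formula on polydiscs, $\phi^{\mathbb{C}}$ is then represented by a convergent power series on a small polydisc around $x_0$. This is the step we expect to be the main obstacle: the theorem genuinely needs ``holomorphic $\Rightarrow$ locally a convergent power series'' in several variables. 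We would take this as the standard iterated one-variable Cauchy formula, but it is the only non-elementary ingredient.

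\textbf{Step 3: restrict to real points.} We must check that $\phi^{\mathbb{C}}$ takes real values at real points. Because $F^{\mathbb{C}}$ has real coefficients, $F^{\mathbb{C}}(\bar z,\bar y)=\overline{F^{\mathbb{C}}(z,y)}$. Hence $z\mapsto\overline{\phi^{\mathbb{C}}(\bar z)}$ also solves the equation. Shrinking $V$ and $W$ to be invariant under conjugation, the uniqueness in Step 1 gives $\overline{\phi^{\mathbb{C}}(\bar z)}=\phi^{\mathbb{C}}(z)$. In particular $\phi^{\mathbb{C}}(x)\in\mathbb{R}^m$ for real $x$.

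Set $U:=V\cap\mathbb{R}^k$ and $\phi:=\phi^{\mathbb{C}}|_U$. Then $F(x,\phi(x))=0$ for all $x\in U$. Moreover, restricting the convergent power series of $\phi^{\mathbb{C}}$ to real arguments gives a power series with real coefficients, since its coefficients are derivatives of $\phi^{\mathbb{C}}$ taken along real directions. So each component of $\phi$ is real analytic, which proves both claims of the theorem.
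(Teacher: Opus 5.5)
Your proof is correct. Note that the paper gives no proof of this statement: it quotes the result from Krantz--Parks and uses it as a black box. The natural comparison is therefore with the classical real-variable argument, Cauchy's method of majorants, which you set aside.

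Your complexification route does the work in different places:
\begin{itemize}
\item Existence and local uniqueness come from the ordinary $C^1$ implicit function theorem, applied to the realification of $F^{\mathbb{C}}$.
\item Analyticity comes for free, because $D\phi^{\mathbb{C}}=-(\mathbf{J}_yF^{\mathbb{C}})^{-1}\mathbf{J}_xF^{\mathbb{C}}$ is complex-linear.
\item The conjugation symmetry $F^{\mathbb{C}}(\bar z,\bar y)=\overline{F^{\mathbb{C}}(z,y)}$ correctly brings you back to real values.
\end{itemize}
The step you flag as the main obstacle is standard. The iterated Cauchy formula on a polydisc applies to any continuous function that is holomorphic in each variable separately, and your $\phi^{\mathbb{C}}$ is even $C^1$.

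The majorant method instead determines the Taylor coefficients of $\phi$ recursively and dominates them by those of an explicitly solvable scalar equation. It is more computational, but it stays within real power series and yields an explicit lower bound on the radius of analyticity of $\phi$. Your argument, as written, does not quantify that radius.

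Two minor points. First, to get real analyticity on all of $U=V\cap\mathbb{R}^k$, not just on a polydisc around $x_0$, expand $\phi^{\mathbb{C}}$ at each real point of $V$. This is legitimate because $\phi^{\mathbb{C}}$ is holomorphic on all of $V$. Second, when you shrink $V,W$ to conjugation-invariant sets, keep $\phi^{\mathbb{C}}(V)\subset W$ so that the uniqueness clause survives.

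Finally, your argument proves more than the statement as written: it gives $\phi(x_0)=y_0$ and uniqueness of solutions in $V\times W$. This stronger form is what the proof of Theorem~\ref{theorem:analytical-functions} tacitly needs. There, the analytic branch must be identified with the stabilizing solution $X_K$, which follows from $\phi(K_0)=X_{K_0}$, continuity, and uniqueness of the stabilizing solution.
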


We now proceed to verify the conditions in Theorem~\ref{theorem:implicit_function_theorem} for the Riccati equation \eqref{eq:mix-2ch-Riccati-XK}.
By vectorizing \eqref{eq:mix-2ch-Riccati-XK}, we define the following mapping $F:\mathcal{K}_\beta\times\mathbb{R}^{n^2}\to \mathbb{R}^{n^2}$ by
\begin{align*}
    F(K,\mathrm{vex}(X)) =& \left(I_n\otimes A_K + A_K\otimes I_n\right)
    \mathrm{vec}(X) \\
    &+ \beta^{-2}\mathrm{vec}\left(XS_KX\right)  + \mathrm{vec}(W),
\end{align*}
We then compute the Jacobian of $F$ w.r.t. $\mathrm{vex}^\tr(X)$. Using standard matrix calculus and the symmetry of $X$ and $S_K$,
\begin{align*}
    &\frac{\partial \mathrm{vec}(XS_K X)}{\partial \mathrm{vec}^\tr X} \\
    =&(X^\tr \otimes I_n) \frac{\partial \mathrm{vec}(XS_K)}{\partial \mathrm{vec}^\tr X}+(I_n \otimes XS_K) \frac{\partial \mathrm{vec} X}{\partial \mathrm{vec}^\tr X} \\
    =&(X^\tr \otimes I_n)(S_K^\tr \otimes I_n)+I_n \otimes XS_K \\
    =&X^\tr S_K^\tr \otimes I_n+I_n \otimes XS_K.
\end{align*}
Denoting $\tilde{A}_K := A_K + \beta^{-2}XS_K$, it follows that
\begin{equation*} \label{eq:Jacobian-F}
    \frac{\partial F(K,\mathrm{vec}(X))}{\partial 
    \mathrm{vec}^\tr(X)}
    = I_n\otimes \tilde{A}_K
    + \tilde{A}_K\otimes I_n.
\end{equation*}
Now, observe that for any $K\in\mathcal{K}_\beta$ the matrix $\tilde{A}_K$
is Hurwitz (and hence invertible) when $X$ is the stabilizing solution to \eqref{eq:mix-2ch-Riccati-XK}.
Thus, for any pair $(K_0,\mathrm{vec}(X_0))$ satisfying $F(K_0,\mathrm{vec}(X_0))=0$, where $X_0$ is the stabilizing solution associated with $K_0\in\mathcal{K}_\beta$, the above Jacobian 
is invertible. 
By Theorem~\ref{theorem:implicit_function_theorem}, it follows that $X_K$ is real analytic in a neighborhood of $K_0\in\mathcal{K}_\beta$.
Since $K_0$ is arbitrary, we conclude that $X_K$ is real analytic on the entire set $\mathcal{K}_\beta$. \hfill $\qed$

\subsection{Proof of Lemma~\ref{theorem:policy-gradient-mixed}} \label{app:proof-mix-grad-2ch}
We begin with an auxiliary lemma that characterizes the derivative of the stabilizing solution to a Riccati equation.
The key idea is to linearize the Riccati equation around the solution, yielding a Lyapunov equation for the derivative.


\begin{lemma} \label{lem:proof-mix-grad-2ch-prep}
    Let $M:(-\delta,\delta)\to\mathbb{R}^{n\times n}$, $G:(-\delta,\delta)\to\mathbb{S}^{n}_+$, and $H:(-\delta,\delta)\to\mathbb{S}^{n}_+$ be real analytic matrix-valued functions for some $\delta>0$.
    Suppose that the Riccati equation
    \begin{equation} \label{eq:proof-grad-2ch-Riccati}
        \mathcal{R}(X_t,t):=M_tX_t+X_tM_t^\tr+G_t+X_tH_tX_t=0
    \end{equation}
    admits a unique \textit{stabilizing} solution $X_t$ for all $t\in(-\delta,\delta)$, i.e.,  $M_t+X_tH_t$ is Hurwitz.
    Then $X_t$ is real analytic over $t\in(\delta,\delta)$, and its derivative at $t=0$, denoted by $X_0'$, satisfies the following Lyapunov equation
    \begin{equation}\label{eq:proof-mix-2ch-grad-Lyapunov}
    \begin{aligned}
        (M_0&+X_0H_0)X_0'+X'_0(M_0+X_0H_0)^\tr\\
        &+(M'_0X_0+X_0M'^\tr_0+G'_0+X_0H'_0X_0)=0.
    \end{aligned}
    \end{equation}
\end{lemma}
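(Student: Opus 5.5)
The plan is to reuse the implicit-function argument from the proof of Theorem~\ref{theorem:analytical-functions} for analyticity, and then to differentiate the identity $\mathcal{R}(X_t,t)=0$ in $t$ to obtain the Lyapunov equation.

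\textbf{Analyticity.} Vectorize \eqref{eq:proof-grad-2ch-Riccati} and set $F(t,\mathrm{vec}(X)) := \mathrm{vec}(M_tX+XM_t^\tr+G_t+XH_tX)$. This map is real analytic in $(t,X)$ because $M,G,H$ are analytic in $t$ and $F$ is polynomial in $X$. The same Kronecker computation as in Appendix~\ref{appendix:proof-analyticity}, using the symmetry of $X$ and $H_t$, gives the Jacobian
\[
\frac{\partial F}{\partial \mathrm{vec}^\tr(X)} = I_n\otimes (M_t+XH_t) + (M_t+XH_t)\otimes I_n .
\]
At $X=X_t$ the matrix $M_t+X_tH_t$ is Hurwitz. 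The eigenvalues of this Kronecker sum are the pairwise sums $\lambda_i+\lambda_j$, which all have negative real part, so the Jacobian is invertible. Theorem~\ref{theorem:implicit_function_theorem} then yields an analytic local solution branch $\phi(t)$ through $(t_0,X_{t_0})$ for every $t_0\in(-\delta,\delta)$.

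\textbf{Main obstacle: identifying the branch with $X_t$.} The implicit function theorem produces some analytic solution near $t_0$, and I must check that it coincides with the given stabilizing solution $X_t$.
\begin{itemize}
\item $\phi$ is continuous and $\phi(t_0)=X_{t_0}$, and the Hurwitz property is open. Hence $M_t+\phi(t)H_t$ stays Hurwitz for $t$ near $t_0$, so $\phi(t)$ is itself a stabilizing solution.
\item $\phi(t)$ is symmetric: $\phi(t)^\tr$ also solves the equation, and the local uniqueness clause of the implicit function theorem forces $\phi(t)^\tr=\phi(t)$.
\item By the assumed uniqueness of the stabilizing solution, $\phi(t)=X_t$.
\end{itemize}
Since $t_0$ was arbitrary, $t\mapsto X_t$ is analytic on all of $(-\delta,\delta)$.

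\textbf{Derivative.} Differentiate $M_tX_t+X_tM_t^\tr+G_t+X_tH_tX_t=0$ at $t=0$ with the product rule:
\[
M_0'X_0+M_0X_0'+X_0'M_0^\tr+X_0M_0'^\tr+G_0'+X_0'H_0X_0+X_0H_0'X_0+X_0H_0X_0'=0 .
\]
Group the terms containing $X_0'$. Using $H_0^\tr=H_0$ and $X_0^\tr=X_0$, we have $(X_0H_0)^\tr=H_0X_0$, so
\[
M_0X_0'+X_0H_0X_0' = (M_0+X_0H_0)X_0', \qquad X_0'M_0^\tr+X_0'H_0X_0 = X_0'(M_0+X_0H_0)^\tr .
\]
The remaining terms are exactly $M_0'X_0+X_0M_0'^\tr+G_0'+X_0H_0'X_0$, which is \eqref{eq:proof-mix-2ch-grad-Lyapunov}. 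Because $M_0+X_0H_0$ is Hurwitz, this Lyapunov equation has a unique solution, so it characterizes $X_0'$.
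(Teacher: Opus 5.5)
Your proof is correct and follows the paper's route: analyticity comes from the implicit function theorem applied to the vectorized Riccati map (as in the proof of Theorem~\ref{theorem:analytical-functions}), and the Lyapunov equation comes from differentiating $\mathcal{R}(X_t,t)=0$ at $t=0$, which is the same as the paper's first-order Taylor-coefficient matching. Your explicit identification of the implicit-function branch with the given stabilizing solution (openness of the Hurwitz property, symmetry via local uniqueness, then uniqueness of the stabilizing solution) fills in a step the paper leaves implicit. Likewise, your $\lambda_i+\lambda_j$ eigenvalue argument for invertibility of the Kronecker sum is more precise than the paper's remark that a Hurwitz matrix is invertible.
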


\begin{proof}
    From the proof of Theorem~\ref{theorem:analytical-functions}, the stabilizing solution $X_t$ is real analytic in $t$.
    Since $M_t,G_t,H_t$, and $X_t$ are real analytic, each admits a Taylor expansion near $t=0$:
    \begin{equation} \label{eq:taylor-expansion}
        \begin{aligned}
        M_t&=M_0+tM'_0+o(t),\ G_t=G_0+tG'_0+o(t),\\
        H_t&=H_0+tH'_0+o(t),\ X_t=X_0+tX'_0+o(t).
    \end{aligned}
    \end{equation}
    Substituting \eqref{eq:taylor-expansion} into the Ricatti equation \eqref{eq:proof-grad-2ch-Riccati} and using $\mathcal{R}(X_0,0)=0$ yields $\mathcal{M}t+o(t)=0$, where $\mathcal{M}$ denotes the LHS of \eqref{eq:proof-mix-2ch-grad-Lyapunov}.
    Since this equality holds for all small $t$, the coefficient of $t$ must vanish, giving \eqref{eq:proof-mix-2ch-grad-Lyapunov}.
    \end{proof}

We now derive the gradient formula of $J_{\mathrm{mix}}$.
%
%
Consider an arbitrary direction $\Delta\in\mathbb{R}^{m\times n}$. For all sufficiently small $t>0$ such that $K+t\Delta\in\mathcal{K}_\beta$, define $M_t:=A+B(K+t\Delta)$, $G_t:=W$, $H_t=\beta^{-2}(Q_\infty+(K+t\Delta)^\tr R_\infty (K+t\Delta))$, and let $X_t$ be the stabilizing solution to $M_tX_t+X_tM_t^\tr+G_t+X_tH_tX_t=0$, whose existence is guaranteed by Lemma~\ref{lem:BRL-X}.
At $t=0$, we have
\begin{align} \label{eq:proof-mix-grad-2ch-notation}
    \begin{split}
        &M_0=A_K,\ M'_0=B\Delta,\ G'_0=0,\ H_0=\beta^{-2}S_K \\
        &H'_0=\beta^{-2}(K^\tr R_\infty \Delta+\Delta^\tr R_\infty K),\ X_0=X_K.
    \end{split}  
\end{align}
By Lemma~\ref{lem:proof-mix-grad-2ch-prep}, $X_t$ admits the first-order expansion:
\begin{equation} \label{eq:taylor-XKdelta}
    X_t=X_K+tX'_0+o(t).
\end{equation}
By \eqref{eq:proof-mix-2ch-grad-Lyapunov} and \eqref{eq:proof-mix-grad-2ch-notation}, the derivative $X'_0$ satisfies
\begin{align*} \label{eq:mix-grad-lyap-2ch}
        \tilde{A}_K X'_0&+X'_0 \tilde{A}_K^\tr + B\Delta X_K+X_K\Delta^\tr B^\tr\\
        &+\beta^{-2}X_K(K^\tr R_\infty\Delta+\Delta^\tr R_\infty K)X_K=0,
\end{align*}
where $\tilde{A}_K:=A_K+\beta^{-2}X_KS_K$. The solution is given by
\begin{equation} \label{eq:mix-grad-lyap-sol-2ch}
\begin{split}
    X'_0 = &\int_0^\infty e^{\tilde{A}_K s} \Big( B\Delta X_K + X_K \Delta^\tr B^\tr \\
    +& \beta^{-2} X_K (K^\tr R_\infty \Delta + \Delta^\tr R_\infty K) X_K \Big) e^{\tilde{A}_K^\tr s} \, ds.
\end{split}
\end{equation}
Now using \eqref{eq:taylor-XKdelta}, we can expand the cost as
    \begin{align*}
        &J_{\mathrm{mix}}(K+t\Delta)= o(t) + J_{\mathrm{mix}}(K) \\
        &+ t\cdot \Tr\left( (K^\tr R_2 \Delta + \Delta^\tr R_2 K) X_K + (Q_2 + K^\tr R_2 K) X'_0 \right). 
    \end{align*}
Hence, the directional derivative is
\begin{equation*} \label{eq:proof-mix-grad-2ch-derivative}
    \begin{aligned} 
    \left.\frac{d J_{\mathrm{mix}}(K+t\Delta)}{dt}\right\rvert_{t=0}
    =&\Tr((K^\tr R_2\Delta\!+\!\Delta^\tr R_2K)X_K)\\
    &+\Tr((Q_2\!+\!K^\tr R_2K)X'_0).
    \end{aligned}
\end{equation*}
Substituting $X_0'$ from \eqref{eq:mix-grad-lyap-sol-2ch} gives the second trace term as
\[
\Tr\left(2X_K(\Gamma_KB+\beta^{-2}\Gamma_KX_KK^\tr R_\infty)\Delta\right)
\]
    where $\Gamma_K$ solves the Lyapunov equation $\tilde{A}_K^\tr \Gamma_K+\Gamma_K\tilde{A}_K+Q_2+K^\tr R_2K=0$.
    Finally, using the identity 
    \[
    \left.\frac{dJ_{\mathrm{mix}}(K+t\Delta)}{dt}\right\rvert_{t=0}=\Tr(\nabla J_{\mathrm{mix}}(K)^\tr\Delta),
    \]
    we identify the gradient formula in \eqref{eq:gradient-J}. \hfill $\qed$

\section{Proofs in Section~\ref{section:no-spurious-stationary}} \label{appendix:proof-no-spurious-stationary}

\subsection{Proof of Corollary~\ref{thm:mixed-1ch-optimality-conditions}} \label{appendix:proof-optimality-signle-channel}

\begin{proof}
    The proof is similar to that of Corollary~\ref{thm:mixed-2ch-optimality-conditions}. We prove both directions of the equivalence.
    
    $\Rightarrow$. 
    Suppose $K\in{\mathcal{K}_\beta}$ is a global minimizer of \eqref{eq:mix-2ch-problem}. Then, its policy gradient given by \eqref{eq:gradient-Jtilde} must vanish, i.e., $\nabla{J_{\mathrm{mix}}(K)}=2(RK+B^\tr {P}_K)\Lambda_K=0$, where ${P}_K\succeq0$ is the unique stabilizing solution to the Riccati equation \eqref{eq:mix-1ch-Riccati-PK}
    \[
    A_K^\tr{P}_K+{P}_KA_K+\beta^{-2}{P}_KW{P}_K+Q+K^\tr R K=0,
    \]
    and $\Lambda_K$ is the unique solution to the Lyapunov equation \eqref{eq:mix-1ch-grad-Lyapunov-eq} associated with $K$.
    Since $W\succ0$, the Lyapunov equation implies $\Lambda_K \succ 0$, and thus we have $K=-R^{-1}B^\tr P_K$. Now letting $P:=P_K$, we immediately obtain \eqref{eq:mix-1ch-optimality-conditions-a}. Substituting this into the Riccati equation \eqref{eq:mix-1ch-Riccati-PK} gives
    \begin{align*}
        (A-BR^{-1}B^\tr {P})^\tr& {P} + {P}(A-BR^{-1}B^\tr {P})\\
        +&\beta^{-2}P W P+Q+ {P}BR^{-1}B^\tr {P} =0, 
    \end{align*}
    which is identical to \eqref{eq:mix-1ch-optimality-conditions-b}.
    Finally, since $P$ is the stabilizing solution to \eqref{eq:mix-1ch-Riccati-PK}, the matrix
    $
    A_K+\beta^{-2}WP = A+(\beta^{-2}W-BR^{-1}B^\tr)P
    $
    is Hurwitz. 

    $\Leftarrow$. 
    Now suppose $K=-R^{-1}B^\tr P$, where $P\succeq0$ is the unique stabilizing solution to the Riccati equation \eqref{eq:mix-1ch-optimality-conditions-b} such that $A+(\beta^{-2}W-BR^{-1}B^\tr)P$ is Hurwitz. First, substituting $K=-R^{-1}B^\tr P$ into the following Riccati equation (which matches the form of \eqref{eq:mix-1ch-Riccati-PK})
    \[
    A_K^\tr{P}+{P}A_K+\beta^{-2}{P}W{P}+Q+K^\tr R K=0,
    \]
    we recover exactly \eqref{eq:mix-1ch-optimality-conditions-b}. This confirms that $P$ satisfies the Riccati equation \eqref{eq:mix-1ch-Riccati-PK} associated with $K$. Next, we verify that $P$ is the stabilizing solution to \eqref{eq:mix-1ch-Riccati-PK}. Observe that
    \[
    A_K+\beta^{-2}WP=A+(\beta^{-2}W-BR^{-1}B^\tr)P.
    \]
    By assumption, this matrix is Hurwitz, which confirms that $P$ is a stabilizing solution to \eqref{eq:mix-1ch-Riccati-PK}. Therefore, it follows from Lemma~\ref{lem:BRL-X} that $K\in{\mathcal{K}_\beta}$. Finally, we recognize from the gradient formula \eqref{eq:gradient-Jtilde} that $\nabla{J_{\mathrm{mix}}(K)}=2(RK+B^\tr {P})\Lambda_K=0$, so $K$ is a stationary point. We then conclude by Theorem~\ref{thm:mix-stationary-global-optimality} that $K$ is globally optimal for \eqref{eq:mix-2ch-problem}. 
\end{proof}

\subsection{Proof of Theorem~\ref{prop:existence-2ch-large-beta}} \label{appendix:proof-2ch-existence-stationary}

\begin{proof}
Our proof is based on the implicit function theorem (Theorem~\ref{theorem:implicit_function_theorem}).
Let $\epsilon:=\beta^{-1}$.
From optimality conditions similar to \eqref{eq:mix-2ch-optimality-conditions} (without assuming that $R_\infty =\alpha^2 R_2$),
we define the following vectorized function
\begin{align*}
&F(K,X,\Gamma,\epsilon)
=
\begin{bmatrix}
    F_K(K,X,\Gamma,\epsilon)\\
    F_X(K,X,\Gamma,\epsilon)\\
    F_\Gamma(K,X,\Gamma,\epsilon)
\end{bmatrix}\\
=&\begin{bmatrix}
\operatorname{vec}\left(  R_2 K+ B^\tr \Gamma+\epsilon^2 R_\infty K X\Gamma\right)\\
\operatorname{vec}\left(  
A_KX + XA_K^\tr  + \epsilon^{2}XS_K X + W \right) \\
\operatorname{vec}\left(  
    \tilde{A}_K^\tr \Gamma+\Gamma \tilde{A}_K + Q_2+K^\tr R_2K 
    \right)
  \end{bmatrix}
\end{align*}
where
$\tilde{A}_K = A_K+ \epsilon^2 XS_K$ is Hurwitz.
This equation characterizes the minimizer of $J_\mathrm{mix}$.
Since $\epsilon_0=0$ gives the LQR case,
we know that there exists some $(X_0,\Gamma_0,K_0)$ satisfying
\begin{equation*}
F(X_0,\Gamma_0,K_0,\epsilon_0)=0
\end{equation*}
and $A_{K_0}:=A+BK_0$ is guaranteed to be stable.
Then, it is not very difficult to compute
the Jacobian $\mathbf{J}_{(K,X,\Gamma)}F$ of $F$ at $(K_0,X_0,\Gamma_0,0)$
\begin{equation*}
\mathbf{J}_{(K,X,\Gamma)}F(K_0,X_0,\Gamma_0,0)
= 
\begin{bmatrix}
I\otimes R_2 &  [\begin{array}{cc}
0 & (I\otimes B^\tr) 
\end{array}]\\
\left[\begin{array}{c}
     * \\
     0 
\end{array}\right]&
\left[\begin{array}{cc}
H_1 &  0 \\
     0& H_2
\end{array}\right]
\end{bmatrix},
\end{equation*}
where
$H_1$ and $H_2$ are the following invertible matrices
\begin{align*}
  H_1 &=  I\otimes A_{K_0}+A_{K_0}\otimes I,\\
  H_2 &= I\otimes A_{K_0}^\tr + A_{K_0}^\tr \otimes I.
\end{align*}
From the Schur complement,
since
\begin{align*}
    I\otimes R_2
    - \begin{bmatrix}
        0& (I\otimes B^\tr)
    \end{bmatrix}
    \begin{bmatrix}
        H_1^{-1} & 0\\
        0 & H_2^{-1}
    \end{bmatrix}
    \begin{bmatrix}
    *\\0
    \end{bmatrix}=
   I\otimes R_2
\end{align*}
is invertible, 
so is $\mathbf{J}_{(K,X,\Gamma)} F(K_0,X_0,\Gamma_0,0)$. 
Therefore, the desired result follows from Theorem~\ref{theorem:implicit_function_theorem}.
In particular, there exists a neighborhood $U\subset \mathbb{R}$ of $\epsilon_0=0$ and 
a real analytic mapping $\phi:U\to \mathbb{R}^{mn+2n^2}$
such that 
$$
F(\phi(\epsilon),\epsilon)=0,\quad
\forall \epsilon\in U,
$$
and $A_K$ is Hurwitz.
Note that the stability of $A_K$ is guaranteed for any sufficiently small $\epsilon>0$.
Therefore, a solution
to \eqref{eq:mix-2ch-optimality-conditions}
exists for any sufficiently small $\epsilon\in U$, i.e.,
for any sufficiently large $\beta> (\sup_{\epsilon\in U}\epsilon)^{-1}$.
\end{proof}

\section{Proofs in Section~\ref{section:ECL}} \label{appendix:proof-ECL}

\subsection{Proof of Lemma~\ref{lem:inequalities-for-J}} \label{subsec:proof-strict-lmis}

\begin{proof}
    $\Rightarrow$.
We first prove this simpler direction. Suppose $K\in\mathrm{cl}(\mathcal{K}_\beta)$ and $\tilde{J}_{\mathrm{mix}}(K)\leq\gamma$. Let $X:=X_K\succ0$ be the minimal solution to \eqref{eq:mix-2ch-Riccati-XK}. It follows that 
\[
\Tr(Q_2X)+\Tr(R_2 K^\tr R_2 K)=J_{\mathrm{mix}}(K)\leq\gamma.
\]

\vspace{2mm}

$\Leftarrow$.
We first show that $A_K$ is stable. This follows from the assumption $W\succ0$ and the existence of $X\succ0$ satisfying
\[
A_KX + XA_K^\tr + \beta^{-2}XS_K X + W \preceq 0.
\]
Then, applying the non-strict bounded real lemma \cite[Lemma 3.2 (2)]{zheng2023benign} leads to $\|\mathbf{T}_{\infty}(K)\|_{\calH_\infty}\leq \beta$, implying that $K\in\mathrm{cl}(\mathcal{K}_\beta)$.
We next verify $\gamma\geq \tilde{J}_{\mathrm{mix}}(K)$.
Let $X_K$ denote the minimal solution to \eqref{eq:mix-2ch-Riccati-XK}. We observe
\begin{align*}
    \gamma &\geq \Tr(Q_2X)+\Tr(R_2 KXK^\tr)\\
           &\geq \Tr(Q_2X_K)+\Tr(R_2 KX_KK^\tr)=\tilde{J}_{\mathrm{mix}}(K),
\end{align*}
where the first inequality is by assumption, and the second utilizes the minimal solution property $X\succeq X_K$.
\end{proof}


\subsection{Proof of Theorem~\ref{proposition:mixed-lifting}} \label{subsection:proof-mixed-lifting}

The proof includes two parts:
\begin{itemize}
    \item The mapping $\Phi$ given by \eqref{eq:mapping-mixed-state-feedback} is a $C^2$ (and in fact $C^\infty$) diffeomorphism from $\mathcal{L}_{\mathrm{lft}}$ to $\mathcal{F}_{\mathrm{cvx}}$. This is proved in Appendix~\ref{subsubsec:proof-diffeo}.
    \item The inclusion $\operatorname{epi}_\geq(J_{\mathrm{mix}}) \subseteq \pi_{K,\gamma}(\mathcal{L}_{\mathrm{lft}})
        = \operatorname{cl}\operatorname{epi}_\geq(J_{\mathrm{mix}})$ given in \eqref{eq:ECL-inclusion} holds. This is proved in Appendix~\ref{subsubsec:proof-inclusion}.
\end{itemize}

\subsubsection{Proof of the Diffeomorphism from $\mathcal{L}_{\mathrm{lft}}$ to $\mathcal{F}_{\mathrm{cvx}}$}\label{subsubsec:proof-diffeo}
We first provide some auxiliary notations. Denote
\begin{align*}
\mathcal{L}_{\mathrm{lft}}^0
& =\left\{ (K,\gamma,X) \left|\,
       K\in \mathbb{R}^{m\times n}, \;\gamma\in\mathbb{R},\;
         X \succ 0
        \right.\right\}, \\
\mathcal{F}_{\mathrm{cvx}}^0
& =\left\{
\left(\gamma, X, Y \right) \left|\,
\gamma\in\mathbb{R},\;
X \succ 0,\;
Y\in \mathbb{R}^{m\times n}
        \right.\right\}.
\end{align*}
Evidently, $\mathcal{L}_{\mathrm{lft}}^0$ and $\mathcal{F}_{\mathrm{cvx}}^0$ can be identified with certain open subsets of some Euclidean spaces, and $\mathcal{F}_{\mathrm{cvx}}^0$ is convex. Also,
\begin{equation} \label{eq:lifted-set-mixed-state-feedback-new}
\scalebox{0.81}{$
    \mathcal{L}_{\mathrm{lft}} 
    \!=\!  \left\{(K,\gamma,X)\!\in\!\mathcal{L}_{\mathrm{lft}}^0\left| 
    \begin{aligned}
    \gamma\geq\Tr(Q_2X)+\Tr(R_2 KXK^\tr)&, \\
    A_KX \!+\! XA_K^\tr \!+\! W \!+\! \beta^{-2}XS_K X \!\preceq \!0&
    \end{aligned} \right.
    \right\},$}
\end{equation}

\begin{equation} \label{eq:convex-set-mixed-state-feedback-new}
\scalebox{0.81}{$
\mathcal{F}_{\mathrm{cvx}}
\!=\!
\left\{
(\gamma, X, Y)\!\in\!\mathcal{F}_{\mathrm{cvx}}^0\left| 
\begin{aligned}
    \gamma\geq\Tr(Q_2X)+\Tr(R_2 YX^{-1}Y^\tr)&,\\
    \mathbb{F}_{\mathrm{cvx}}\preceq0&
\end{aligned}\right.
\right\}.$}
\end{equation}
We can now extend the domain of $\Phi$ in  \eqref{eq:mapping-mixed-state-feedback} so that it is now defined on the larger set $\mathcal{L}_{\mathrm{lft}}^0$:
\begin{equation}\label{eq:extend-def-Phi}
    \Phi(K,\gamma,X)=\left(\gamma,X,KX\right),\quad \forall (K,\gamma,X)\in\mathcal{L}_{\mathrm{lft}}^0.
\end{equation}

Our subsequent proof consists of showing the following:
\begin{enumerate}
\item $\Phi$ is a $C^2$ diffeomorphism from $\mathcal{L}_{\mathrm{lft}}^0$ to $\mathcal{F}_{\mathrm{lft}}^0$.

\item For any $(K,\gamma,X)\!\in\!\mathcal{L}_{\mathrm{lft}}^0$, we have $(K,\gamma,X)\!\in\!\mathcal{L}_{\mathrm{lft}}$ if and only if 
$\Phi(K,\gamma,X)\!\in\!\mathcal{F}_{\mathrm{cvx}}$.
\end{enumerate}

\vspace{2mm}
\noindent\textbf{Part I.} Consider $\Phi$ in \eqref{eq:extend-def-Phi} and define the mapping 
\begin{equation} \label{eq:inverse-Phi-mix}
    \Psi(\gamma, X, Y) = (YX^{-1}, \gamma, X), \quad \forall (\gamma, X, Y) \in \mathcal{F}_\mathrm{cvx}^0.
\end{equation}
It is immediate to verify that  $\Phi(\mathcal{L}_{\mathrm{lft}}^0)\subseteq \mathcal{F}_{\mathrm{cvx}}^0$ and $\Psi(\mathcal{F}_\mathrm{cvx}^0) \subseteq \mathcal{L}_\mathrm{lft}^0$. We are now allowed to form the compositions $\Phi \circ \Psi:\mathcal{L}_\mathrm{lft}^0\to\mathcal{L}_\mathrm{lft}^0$ and $\Psi \circ \Phi:\mathcal{F}_\mathrm{cvx}^0\to\mathcal{F}_\mathrm{cvx}^0$. It is also straightforward to see that these two compositions are the identity functions on their domains.

Summarizing the previous results, we can conclude that $\Phi$ is a bijection from $\mathcal{L}_{\mathrm{lft}}^0$ to $\mathcal{F}_\mathrm{cvx}^0$ and $\Psi$ is its inverse. Finally, $\Phi$ and $\Psi$ are in fact both $C^\infty$ functions since each element of $\Phi$ or $\Psi$ is a rational function.

\vspace{3mm}
\noindent\textbf{Part II.}
Consider the mapping $\Phi$ in \eqref{eq:extend-def-Phi}. We will show that $(K,\gamma,X)\in\mathcal{L}_{\mathrm{lft}}$ if and only if 
$\Phi(K,\gamma,X)\in\mathcal{F}_{\mathrm{cvx}}$.
Let $(K,\gamma,X)\in\mathcal{L}_{\mathrm{lft}}$ be arbitrary, and let $Y\!=\!KX$ so that $
\Phi(K,\gamma,X)=(\gamma,X,Y)$. By the definition of $\mathcal{L}_{\mathrm{lft}}$ in \eqref{eq:lifted-set-mixed-state-feedback-new}, $(K,\gamma,X)$ satisfies the following inequalities:
\begin{align}
    &X\succ0,\ \gamma\geq\Tr(Q_2X)+\Tr(R_2 KXK^\tr), \nonumber \\
    &A_KX + XA_K^\tr + W + \beta^{-2}XS_K X \!\preceq \!0 \label{eq:ARE-Llft-proof}.
\end{align}
Since $X\succ0$, we can express $K=YX^{-1}$.
By Schur complement and using $Y=KX$, the Riccati inequality in \eqref{eq:ARE-Llft-proof} is equivalent to $\mathbb{F}_{\mathrm{cvx}}\preceq0$.
Comparing the obtained properties of $(\gamma,X,Y)$ with the definition of $\mathcal{F}_\mathrm{cvx}$ in \eqref{eq:convex-set-mixed-state-feedback-new}, we confirm that $(\gamma,X,Y) \in \mathcal{F}_{\mathrm{cvx}}$. 
By reversing the derivation above, it is not difficult to prove the other direction.

\subsubsection{Proof of \eqref{eq:ECL-inclusion}} \label{subsubsec:proof-inclusion}
We first show the inclusion 
$
\operatorname{epi}_{\geq} (J_{\mathrm{mix}})\subseteq \pi_{K,\gamma}(\mathcal{L}_{\mathrm{lft}}),
$
which follows immediately from Corollary~\ref{coro:epi-pi-same} and the fact that
$\operatorname{epi}_{\geq}(J_{\mathrm{mix}}) \subseteq \operatorname{epi}_{\geq}(\tilde{J}_{\mathrm{mix}})$.
Next, the set identity
\begin{equation} \label{eq:set-identity}
    \pi_{K,\gamma}(\mathcal{L}_{\mathrm{lft}})=\mathrm{cl}\,\operatorname{epi}_{\geq} (J_{\mathrm{mix}}).
\end{equation}
will be established through the following lemma.

\begin{lemma} \label{lem:epi-clepi-same}
Under Assumptions~\ref{assumption:1}~and~\ref{assumption:positive-definiteness}, we have $\operatorname{epi}_\geq(\tilde{J}_{\mathrm{mix}}) = \operatorname{cl}\operatorname{epi}_\geq(J_{\mathrm{mix}})$.
\end{lemma}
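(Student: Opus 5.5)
We prove the two inclusions separately, working in $\mathbb{R}^{m\times n}\times\mathbb{R}$. Here $\tilde{J}_{\mathrm{mix}}$ is understood on $\mathrm{cl}(\mathcal{K}_\beta)$: it equals $J_{\mathrm{mix}}$ on $\mathcal{K}_\beta$ and is given by the minimal-solution formula \eqref{eq:cost-value-boundary} on $\partial\mathcal{K}_\beta$.

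\emph{Inclusion $\operatorname{cl}\operatorname{epi}_\geq(J_{\mathrm{mix}})\subseteq\operatorname{epi}_\geq(\tilde{J}_{\mathrm{mix}})$.} Take $(K_j,\gamma_j)\in\operatorname{epi}_\geq(J_{\mathrm{mix}})$ converging to $(K,\gamma)$. Then $K\in\mathrm{cl}(\mathcal{K}_\beta)$, which by Theorem~\ref{prop:closure} is a subset of $\mathcal{K}$. There are two cases.
\begin{itemize}
\item If $K\in\mathcal{K}_\beta$, continuity of $J_{\mathrm{mix}}$ (Theorem~\ref{theorem:analytical-functions}) gives $\gamma=\lim\gamma_j\ge\lim J_{\mathrm{mix}}(K_j)=J_{\mathrm{mix}}(K)$.
\item If $K\in\partial\mathcal{K}_\beta$, Lemma~\ref{thm:continuity-mix-cost} gives $J_{\mathrm{mix}}(K_j)\to\tilde{J}_{\mathrm{mix}}(K)$, so again $\gamma\ge\tilde{J}_{\mathrm{mix}}(K)$.
\end{itemize}
An alternative route is to use Corollary~\ref{coro:epi-pi-same} and show that $\pi_{K,\gamma}(\mathcal{L}_{\mathrm{lft}})$ is closed. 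We prefer the direct continuity argument above, since it avoids compactness issues with the lifting variable $X$.

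\emph{Inclusion $\operatorname{epi}_\geq(\tilde{J}_{\mathrm{mix}})\subseteq\operatorname{cl}\operatorname{epi}_\geq(J_{\mathrm{mix}})$.} Let $K\in\mathrm{cl}(\mathcal{K}_\beta)$ with $\gamma\ge\tilde{J}_{\mathrm{mix}}(K)$. If $K\in\mathcal{K}_\beta$, the pair $(K,\gamma)$ already lies in $\operatorname{epi}_\geq(J_{\mathrm{mix}})$. Otherwise $K\in\partial\mathcal{K}_\beta$, and by definition of the closure there is a sequence $K_j\in\mathcal{K}_\beta$ with $K_j\to K$. Set
\[
\gamma_j:=\max\{\gamma,\,J_{\mathrm{mix}}(K_j)\}.
\]
Then $(K_j,\gamma_j)\in\operatorname{epi}_\geq(J_{\mathrm{mix}})$. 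By Lemma~\ref{thm:continuity-mix-cost}, $J_{\mathrm{mix}}(K_j)\to\tilde{J}_{\mathrm{mix}}(K)\le\gamma$, hence $\gamma_j\to\gamma$ and $(K,\gamma)$ is a limit point of $\operatorname{epi}_\geq(J_{\mathrm{mix}})$.

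\emph{Main obstacle.} Both directions rest on Lemma~\ref{thm:continuity-mix-cost} (continuity of minimal Riccati solutions up to the boundary) and on Theorem~\ref{prop:closure}. The latter guarantees that limit points of $\mathcal{K}_\beta$ remain stabilizing with $\mathcal{H}_\infty$ norm at most $\beta$, so $\tilde{J}_{\mathrm{mix}}$ is well defined there. With those two results available the argument is short. The step needing the most care is the boundary case of the first inclusion: we must use that the limit in Lemma~\ref{thm:continuity-mix-cost} holds along every sequence in $\mathcal{K}_\beta$, not just along some sequence.
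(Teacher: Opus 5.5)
Your proof is correct and follows the paper's argument. Both inclusions come from taking limits along sequences in $\mathcal{K}_\beta$ and using continuity of the cost up to the boundary (Lemma~\ref{thm:continuity-mix-cost}), with Theorem~\ref{prop:closure} ensuring $\tilde{J}_{\mathrm{mix}}$ is defined at limit points. Your choice $\gamma_j=\max\{\gamma,J_{\mathrm{mix}}(K_j)\}$ is slightly cleaner than the paper's $\gamma_n=\gamma+1/n$, which is not guaranteed to dominate $J_{\mathrm{mix}}(K_n)$ for every $n$ without passing to a subsequence.
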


\begin{proof}
    $\subseteq$. Let $(K,\gamma)\in\operatorname{epi}_\geq(\tilde{J}_{\mathrm{mix}})$, i.e., $K\in\mathrm{cl}(\mathcal{K}_\beta)$ and $\tilde{J}_{\mathrm{mix}}(K)=\Tr((Q_2+K^\tr R_2 K) X_K)\leq\gamma$.
    We claim that there exists a sequence in $\operatorname{epi}_\geq(J_{\mathrm{mix}})$ converging to $(K,\gamma)$.
    Since $K\in\mathrm{cl}(\mathcal{K}_\beta)$, there exists a sequence $\{K_n\}\subset \mathcal{K}_\beta$ such that $K_n\to K$.
    Since $\tilde{J}_{\mathrm{mix}}$ is continuous on $\mathrm{cl}(\mathcal{K}_\beta)$, we have $\tilde{J}_{\mathrm{mix}}(K_n)\to \tilde{J}_{\mathrm{mix}}(K)$. Pick $\gamma_n=\gamma+\frac{1}{n}$. Then $(K_n,\gamma_n)\in\operatorname{epi}_\geq(J_{\mathrm{mix}})$ and $(K_n,\gamma_n)\to(K,\gamma)$. Therefore, $(K,\gamma)\in\operatorname{cl}\operatorname{epi}_\geq(J_{\mathrm{mix}})$.

    \vspace{2mm}
    
    $\supseteq$. Let $(K,\gamma)\in\operatorname{cl}\operatorname{epi}_\geq(J_{\mathrm{mix}})$, i.e., there exists a sequence $(K_n,\gamma_n)\in\operatorname{epi}_\geq(J_{\mathrm{mix}})$ such that $(K_n,\gamma_n)\to(K,\gamma)$.
    We have $K_n\in\mathcal{K}_\beta$, ${J}_{\mathrm{mix}}(K_n)\leq \gamma_n$ and $K_n\to K\in\mathrm{cl}(\mathcal{K}_\beta)$.
    By the continuity of $\tilde{J}_{\mathrm{mix}}$ on $\mathrm{cl}(\mathcal{K}_\beta)$, taking limits on both sides of $J_{\mathrm{mix}}(K_n)\leq\gamma_n$ yields $\tilde{J}_{\mathrm{mix}}(K)\leq\gamma$.
    Therefore, $(K,\gamma)\in\operatorname{epi}_\geq(\tilde{J}_{\mathrm{mix}})$.
\end{proof}

Combining Lemma~\ref{lem:epi-clepi-same} and Corollary~\ref{coro:epi-pi-same} yields \eqref{eq:set-identity}.

\vspace{3mm}

\balance

{\small
\begin{spacing}{0.9}
\bibliographystyle{IEEEtran}
\bibliography{ref_etal.bib}
\end{spacing}
}

\end{document}